\documentclass[reqno]{amsart}
\usepackage{amssymb, amsmath}
\usepackage{array}
\usepackage{xcolor}

\usepackage{comment}

\usepackage[noadjust]{cite}
\usepackage[hidelinks]{hyperref}

\usepackage{fullpage}

\makeatletter
\def\thmhead@plain#1#2#3{%
  \thmname{#1}\thmnumber{\@ifnotempty{#1}{ }\@upn{#2}}%
  \thmnote{ {\the\thm@notefont#3}}}
\let\thmhead\thmhead@plain
\makeatother

\newtheorem{theorem}{Theorem}
\newtheorem{corollary}[theorem]{Corollary}
\newtheorem{lemma}[theorem]{Lemma}
\newtheorem{proposition}[theorem]{Proposition}

\theoremstyle{definition}

\newtheorem{definition}[theorem]{Definition}
\newtheorem{remark}[theorem]{Remark}
\newtheorem{example}[theorem]{Example}

\numberwithin{equation}{section}
\numberwithin{theorem}{section}

\newcommand{\N}{\mathbb{N}}

\DeclareMathOperator{\sgn}{sgn}
\DeclareMathOperator{\I}{Id}
\DeclareMathOperator{\Id}{Id}
\DeclareMathOperator{\End}{End}
\DeclareMathOperator{\spa}{span}
\DeclareMathOperator{\cha}{char}

\DeclareMathOperator{\V}{var}
\newcommand{\M}{\mathcal{M}}
\newcommand{\IM}{\mathcal{I}\mathcal{M}}
\newcommand{\dia}{\symbol{35}}
\DeclareMathOperator{\op}{op}

\DeclareMathOperator{\ide}{id}

\begin{document}

\title{$W$-algebras with involution: \\ polynomial identities and asymptotic growth}

\author[G.~Giordani]{Ginevra Giordani}
\address{Ginevra Giordani, DISIM, University of L'Aquila, 67100
L'Aquila, Italy}
\email{ginevra.giordani@graduate.univaq.it}


\author[A.~Ioppolo]{Antonio Ioppolo}
\address{Antonio Ioppolo, DISIM, University of L'Aquila, 67100
L'Aquila, Italy}
\email{antonio.ioppolo@univaq.it}


\author[C.~Rizzo]{Carla Rizzo}
\address{Carla Rizzo, CMUC, Department of Mathematics, University of Coimbra, 3000-143
Coimbra, Portugal.}
\email{carlarizzo@mat.uc.pt}

\thanks{A. Ioppolo was supported by “Progetti di Ateneo 2025” of the University of L'Aquila and by GNSAGA-INDAM, Italy.}

\thanks{C. Rizzo was financially supported by the Fundação para a Ciência e a Tecnologia (Portuguese Foundation for Science and Technology) under the scope of the projects UID/00324/2025 (https://doi.org/10.54499/UID/00324/2025) (Centre for Mathematics of the University of Coimbra).}


\subjclass[2020]{Primary 16R10, 16R50, 16W10; Secondary 16P90, 16G30, 16D20}

\keywords{$W$-algebras with involution, multipliers, polynomial identity,  generalized polynomial identity with involution, codimension growth, polynomial growth}


\begin{abstract}
Let $W$ be an algebra with involution over a field of characteristic zero. We develop a theory of $W$-polynomial identities with involution for finite-dimensional $(W,*)$-algebras $A$, using the multiplier algebra with involution of $A$. We prove that the $(W,*)$-exponent of $A$ exists and coincides with the ordinary $*$-exponent. We then consider a four-dimensional subalgebra $M$ of the algebra of $4\times4$ upper triangular matrices, endowed with the reflection involution, and study two non-equivalent $(W,*)$-algebra structures on it. For both structures, we determine the corresponding $T_W^*$-ideals of $W$-polynomial identities with involution and compute the $(W,*)$-codimension sequences explicitly; for one of them, we also determine the complete $(W,*)$-cocharacter sequence. Finally, we prove that these two $W$-algebras with involution generate distinct varieties of almost polynomial growth.
\end{abstract}

\maketitle

\section{Introduction}

Polynomial identities provide a powerful tool for studying algebras by means of the polynomial relations that hold under all evaluations. 
A natural extension is obtained by allowing the coefficients of the polynomials to belong to a fixed associative algebra $W$, rather than being restricted to the ground field. In this setting, elements of $W$ may occur at arbitrary positions among the variables, reflecting their role as noncommutative coefficients. This leads to the notion of a \emph{$W$-polynomial}.

The corresponding notion of a \emph{$W$-polynomial identity} naturally extends the concept of a generalized polynomial identity, introduced by Amitsur in 1965 \cite{AmiGPI} and subsequently developed by several authors (see \cite{Row} and the references therein). For unital algebras, the two notions coincide, and $W$-polynomial identities are therefore often referred to as generalized polynomial identities.

The natural framework for $W$-polynomial identities is provided by the theory of $W$-algebras. Roughly speaking, a $W$-algebra is an associative algebra $A$ endowed with a $W$-bimodule structure satisfying some additional $W$-associative conditions. This theory has been recently developed, proving to be particularly useful in the study of codimension growth and varieties of algebras with $W$-polynomial identities (see \cite{MarRiz,MarRiz2}).

The theory of generalized polynomial identities has a classical counterpart for algebras with involution. Following Amitsur's work on identities in rings with involution \cite{Amitsur1969}, Martindale studied generalized polynomial identities for prime rings with involution \cite{Martindale1972}. Rowen subsequently developed a structural framework for generalized identities in rings with involution \cite{RowGI1,RowGI2,RowGI3}, which was later incorporated into the systematic treatment of polynomial and generalized identities in \cite{Row}.
In the present paper, we extend generalized polynomial identities with involution to the broader setting of $W$-algebras, focusing in particular on their codimensions, cocharacters, and asymptotic growth.

A fundamental role in our approach is played by multiplier algebras, which provide the natural setting for incorporating the action of a possibly non-unital algebra $W$ on an associative algebra. The notion of a multiplier algebra of an associative algebra dates back to Hochschild \cite{Hochschild} and was later developed in the context of $C^*$-algebras by Busby \cite{Busby}; it has since found applications in various areas, including partial dynamical systems \cite{DokEx,Exel,DokRioSim}.
In our setting, an action of $W$ on $A$ is encoded by a homomorphism $\Phi$ from $W$ into the multiplier algebra $\M(A)$ of $A$. When $W$ and $A$ are endowed with involutions, $\M(A)$ naturally inherits an involution, and $\Phi$ is a homomorphism of algebras with involution. This provides the structural framework for our study of $W$-algebras with involution.

A central problem in PI-theory is the asymptotic behavior of codimension sequences, which quantify the multilinear polynomial identities of an algebra. In particular, their exponential growth rate is captured by the PI-exponent in the ordinary setting \cite{GiaZai98,GiaZai99}. Analogous growth rates have been studied extensively for algebras endowed with additional structures; see, for instance, \cite{AljadeffGiambruno2013,AljadeffGiambrunoLaMattina2011,DosSantosExp,GiaIopLaMTraceExp,GiambrunoLaMattina2010,GiaZai,Gordienko2013,IopPseudoExp,IopSuperExp}.
In the framework of $W$-algebras, the exponential growth of $W$-codimensions has recently been investigated for finite-dimensional unital $W$-algebras \cite{MarRiz}, finite-dimensional $W$-algebras not necessarily with unity \cite{RizSurvey}, and finite-dimensional group-graded $W$-algebras \cite{BusMartRiz}. For algebras with involution, the existence and integrality of the $*$-exponent were established in \cite{GiaPolVal}.

It is worth noting that the exponent associated with an additional algebraic structure need not coincide with the ordinary PI-exponent; this is already the case for algebras with involution \cite{GiaMis2001}. On the other hand, for finite-dimensional algebras, the two exponents are known to coincide in some settings, including $W$-algebras, group-graded $W$-algebras, and algebras endowed with an action of a Lie algebra by derivations \cite{Riz2023}.

One of the main objectives of this paper is to show that, for a finite-dimensional $W$-algebra $A$ with involution, the corresponding $(W,*)$-exponent $\exp^{W,*}(A)$ exists and coincides with the $*$-exponent of $A$. In particular, it is a non-negative integer. Thus, although the additional $W$-action may substantially affect the $W$-polynomial identities with involution and their codimension sequence, it does not alter their exponential rate of growth.

Since $\exp^{W,*}(A)$ is an integer, it is natural to investigate the boundary between polynomial and exponential growth, corresponding to the cases $\exp^{W,*}(A)=1$ and $\exp^{W,*}(A)\geq 2$. To this end, we focus on the algebra with involution $M$, a subalgebra of the upper triangular matrices of order four endowed with the reflection involution introduced in \cite{MisVal}.

We determine the only two $W$-algebra with involution structures on $M$, denoted by $M^F$ and $M^D$, that generate distinct varieties of $W$-algebras with involution of almost polynomial growth. More precisely, $\exp^{W,*}(M^F)=\exp^{W,*}(M^D)=2$, whereas every proper subvariety has polynomially bounded codimension sequence (equivalently $(W,*)$-exponent $1$). For both algebras, we determine generators of the corresponding $T_W^*$-ideals of $W$-polynomial identities with involution and compute their codimensions explicitly. We also give a complete description of the generalized $*$-cocharacters of $M^D$.

The paper is organized as follows. Section~2 describes the structure of $W$-algebras with involution in terms of multiplier algebras. In Section~3, we introduce $W$-polynomial identities with involution, together with the corresponding codimensions and cocharacters. Section~4 is devoted to the $(W,*)$-exponent. In Section~5, we study the $W$-polynomial identities and explicitly determine the corresponding generalized $*$-codimensions of $M^F$ and $M^D$, while Section~6 focuses on the generalized $*$-cocharacters of $M^D$. Finally, in Section~7, we investigate the growth of the varieties of $W$-algebras with involution generated by $M^F$ and $M^D$.

\section{$W$-algebras with involution} \label{s: 1}

Let $F$ be a field of characteristic zero and let $A$ be an associative algebra over $F$.
An involution on $A$ is a linear map $* \colon A \to A$ such that, for any $a,b \in A$,
$$
(a^*)^*=a, \quad \quad (ab)^*=b^*a^*.
$$
If $A$ is equipped with an involution $*$, then it is called a $*$-algebra.
Since the field $F$ has characteristic zero, a $*$-algebra $A$ can be written as $A=A^+\oplus A^-$, where
$$
A^+=\{a \in A : a^*=a\} \quad \quad \text{and} \quad \quad A^-=\{a \in A : a^*=-a\}
$$
are the vector spaces of symmetric and skew-symmetric elements of $A$, respectively.

\medskip

Now fix an $F$-algebra $W$ endowed with an involution $\sharp$. We say that a $*$-algebra $A$ is a \emph{$(W,*)$-bimodule} if it is equipped with a structure of both left and right $W$-module and the following properties hold:
\begin{itemize}
\item the corresponding actions permute, i.e., for all $w_1,w_2 \in W$, $a\in A$, we have that
\begin{equation}\label{eq: compatibility bimodule}
  (w_1a)w_2=w_1(aw_2),
\end{equation}
\item the involutions on $W$ and $A$ satisfy the following compatibility relations: for all $a \in A$, $w\in W$,
\begin{equation}\label{eq: compatibility involution}
	(wa)^*=a^*w^{\dia} \quad \text{ and } \quad (aw)^*=w^{\dia}a^*.
\end{equation}
\end{itemize}
Moreover, if $W$ has a unity element $1_W$, we further assume that $1_W a=a 1_W=a$ for all $a\in A$. 

\begin{definition}
We say that $A$ is a \emph{$(W,*)$-algebra}, or a \emph{$W$-algebra with involution $*$}, if $A$ is a $(W,*)$-bimodule satisfying the following associative relations: for all $w\in W$, $ a_1,a_2\in A$,
\begin{align}\label{eq: compat}
		w(a_1a_2)=(wa_1)a_2, \quad
		(a_1a_2)w=a_1(a_2w), \quad
		(a_1w)a_2=a_1(wa_2).
\end{align}
\end{definition}

\begin{remark}\label{rk: A op}
	Given an algebra $A$, we can define the opposite algebra $A^{\op}$ as the underlying vector space of $A$ endowed with the opposite product $\cdot^{\op}$, defined by $a	\cdot^{\op}b=b \cdot a$, where $\cdot$ is the product of $A$.
\end{remark}

Let $\End_F(A)$ be the algebra of the endomorphisms of $A$ acting on the left of $A.$ The multiplication in $\End_F(A)$ is given by the usual composition of endomorphisms, written by juxtaposition. 

\begin{definition}
Define $\M(A)$ as the subset of $ \End_F^{\op}(A) \times \End_F(A)$ of the pairs $(R,L)$ satisfying,
for all $a,b \in A$,
\begin{equation}\label{eq: comp L,R}
R(ab) = aR(b), \quad L(ab) = L(a)b, \quad R(a)b = aL(b).
\end{equation}
We refer to $R$ and $L$ as a \emph{right} and \emph{left multiplier} of $A$, respectively, and to the elements of $\M(A)$ as \emph{multipliers} of $A$.
\end{definition}

An element $m\in A $ obviously determines the multiplier $(R_m,L_m)\in \M(A)$, where 
$$
R_m(a)=am \quad \mbox{ and } \quad L_m(a)=ma
$$
for all $a\in A,$ called \emph{inner multiplier} of $A$ induced by the element $m$.

\smallskip

Our next goal is to establish a correspondence between multipliers and $(W,*)$-actions. To this end, we first record several useful properties of multipliers.

It is straightforward to verify that $\M(A)$ is a unital $F$-algebra under component-wise addition and multiplication, with unit element $(\ide_A, \ide_A).$

Now suppose that $A$ is an $F$-algebra equipped with an involution $*$.
We can endow $\M(A)$ with a corresponding involution. To this end, if $T\in \End_F(A)$, we define the map $T^*\in \End_F(A)^{op}$ as
$$
T^*(a):=(T(a^*))^*
$$
for every $a\in A$.
Note that if we take a map $S\in \End_F(A)^{op}$, one can define $S^*\in \End_F(A)$ in the same manner as before.
Thus, we equip $\M(A)$ with an involution $*$ defined by
\[
(R,L)^{*}: =(L^*,R^*).
\]
We refer to $\M(A)$ with this involution as the \emph{multiplier $*$-algebra} of $A$.
Note that this involution interchanges the right and left multipliers.

Recall that a $*$-ideal $I$ of a $*$-algebra $A$ is an ideal $I$ of $A$ such that $I^*=I$. Then, clearly, the set of all inner multipliers of $A$
$$
\IM(A)=\{(R_m,L_m) \, : \, m\in A\}
$$
is a two-sided $*$-ideal of $\M(A)$, and it is called the \emph{inner multiplier $*$-ideal} of $A$.

\begin{definition}
A right multiplier $R$ and a left multiplier $L$ of $A$ are \emph{permutable} if
\begin{equation}\label{eq: permutability}
RL = LR.
\end{equation}
\end{definition}

In general, not all left and right multipliers are permutable, as shown by the following example.

\begin{example}\label{ex: non permutable}
    Let $A=Fe_{12}\oplus Fe_{13} \oplus Fe_{23}$, where the $e_{ij}$’s are the usual matrix units. Consider the multiplier  $(R,L)\in \M(A)$ defined by
    \begin{equation*}
        \begin{split}
            R(e_{12})=e_{12}, \quad R(e_{13})=0, \quad R(e_{23})=e_{13},\\
            L(e_{12})=e_{13}, \quad L(e_{13})=0, \quad L(e_{23})=e_{23}.
        \end{split}
    \end{equation*}
Then $R$ and $L$ are not permutable. Indeed, $ R(L(e_{12}))=R(e_{13})=0,$ whereas $L(R(e_{12}))=L(e_{12})=e_{13}.$ Notice also that $(R,L)\notin \IM(A)$. Indeed, suppose there exists $m=\alpha e_{12} + \beta  e_{13} + \gamma e_{23}\in A$ such that $(R,L)=(R_m,L_m)$, then applying the right multiplier to $e_{12}$, we obtain $e_{12}=\gamma e_{13}$,  which is a contradiction.
\end{example}
 
It is straightforward to show that every multiplier permutes with any inner multiplier. Consequently, for unital algebras, we obtain the following result.

\begin{proposition}\label{pro: unitary}
    Let $A$ be a unital $*$-algebra. Then $A\cong \IM(A)= \M(A)$ as $*$-algebras and $R_aL_b=L_bR_a$ for every $a,b\in A$.
\end{proposition}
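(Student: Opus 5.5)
The plan is to use the unit $1_A$ to show that every multiplier is inner, and then to check that the map $m\mapsto (R_m,L_m)$ is an isomorphism of $*$-algebras onto $\IM(A)=\M(A)$.

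Given $(R,L)\in\M(A)$, set $r=R(1_A)$ and $l=L(1_A)$. The first relation in \eqref{eq: comp L,R} gives $R(a)=R(a\cdot 1_A)=aR(1_A)=ar$, so $R=R_r$. Similarly, $L(a)=L(1_A a)=L(1_A)a=la$, so $L=L_l$. The third relation with $a=b=1_A$ gives $R(1_A)1_A=1_A L(1_A)$, hence $r=l$. Therefore $(R,L)=(R_m,L_m)$ with $m=r=l$, and $\M(A)=\IM(A)$.

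Next define $\varphi\colon A\to\M(A)$ by $\varphi(m)=(R_m,L_m)$; it is linear.

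\emph{Multiplicativity.} In $\End_F(A)$ we have $L_mL_n=L_{mn}$. In $\End_F^{\op}(A)$ the product of $R_m$ and $R_n$ is the composition $R_n\circ R_m$, which sends $a\mapsto amn$, so it equals $R_{mn}$. This is exactly why the right component lives in the opposite algebra, and the bookkeeping of this convention is the only delicate point of the whole argument.

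\emph{Injectivity.} $L_m(1_A)=m$, so $\varphi(m)=0$ forces $m=0$.

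\emph{Surjectivity.} This is the previous paragraph.

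\emph{Compatibility with the involution.} Compute $L_m^*(a)=(L_m(a^*))^*=(ma^*)^*=am^*=R_{m^*}(a)$, and likewise $R_m^*=L_{m^*}$. Hence
\[
\varphi(m)^*=(L_m^*,R_m^*)=(R_{m^*},L_{m^*})=\varphi(m^*).
\]

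Finally, the permutability follows from associativity:
\[
R_aL_b(x)=(bx)a=b(xa)=L_bR_a(x).
\]
(This also follows from the remark that every multiplier permutes with inner ones.) No step poses a real obstacle.
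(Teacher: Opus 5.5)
Your proof is correct and is essentially the paper's argument. The paper takes $\IM(A)=\M(A)$ and the algebra isomorphism $m\mapsto(R_m,L_m)$ from \cite[Corollary 2.5]{MarRiz}, which you instead prove directly from $R(a)=aR(1_A)$, $L(a)=L(1_A)a$ and $R(1_A)=L(1_A)$; it then checks $L_m^*=R_{m^*}$ and $R_m^*=L_{m^*}$ exactly as you do. Your associativity argument for $R_aL_b=L_bR_a$ fills in the permutability claim, which the paper leaves to the remark preceding the proposition.
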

\begin{proof}
From \cite[Corollary 2.5]{MarRiz}, we have that $\IM(A)=\M(A)$, and moreover, $A\cong \IM(A)$ as $F$-algebras via the isomorphism $\mu \colon A \to \IM(A)$ defined by $\mu(m)=(R_m,L_m)$, for all $m\in A$. Since $R_{m^*}(a)=L_{m}^{*}(a)$ and $L_{m^*}(a)=R_{m}^{*}(a)$ for all $a,m\in A$, it follows that $\mu$ is an isomorphism of $*$-algebras, which completes the proof.
\end{proof}

Now we can explain the connection between multiplier $*$-algebras and $(W,*)$-algebras. Recall that the right and left actions of $W$ on $A$ induce the following $F$-algebra homomorphisms
\begin{align*}
    \rho \colon W \to \End_F(A)^{op} \quad \mbox{ and }  \quad \lambda \colon W \to \End_F(A)
\end{align*}
defined by $\rho(w)(a)=aw$ and $\lambda (w)(a)=wa$, for all $w \in W, a \in A$,
and vice versa. These are referred to as the right and left representations of $W$ on $A$, respectively.
From \eqref{eq: compat}, it follows that
$(\rho(w),\lambda(w))\in \M(A)$, for all $w \in W$.
Thus, the map
\begin{equation*}\label{eq: phi act hom}
    \begin{split}
        \Phi \colon W & \longrightarrow\M(A)\\
        w &\mapsto (\rho(w),\lambda(w))\\
    \end{split}
\end{equation*}
is a homomorphism of $F$-algebras. By \eqref{eq: compatibility involution}, for all $w\in W$, we have that
\begin{equation*}
	\rho^*(w)=\lambda(w^{\dia})
	\quad \text{and} \quad
	\lambda^*(w)=\rho(w^{\dia})
\end{equation*}
and, as a consequence, $\Phi$ is a homomorphism of $*$-algebras.
Moreover, by \eqref{eq: compatibility bimodule}, $\Phi$ satisfies the following additional condition: for all $w_1,w_2\in W$
\begin{equation}\label{Phi permutability}
    \rho(w_2)\lambda(w_1)=\lambda(w_1)\rho(w_2).
\end{equation}
Furthermore, if $W$ has unity, then $\Phi(1_W)=(\ide_A,\ide_A)$.

The converse is not generally true because not all $*$-homomorphisms from $W$ to $\M(A)$ define a $(W,*)$-algebra structure on $A$; indeed, there may exist $*$-homomorphisms from $W$ to $\M(A)$ that do not satisfy condition \eqref{Phi permutability}, since generally not all multipliers of $A$ satisfy the permutability condition \eqref{eq: permutability} (see Example \ref{ex: non permutable}). 

However, if we consider a $*$-homomorphism $\Phi \colon W \to \M(A)$ such that $\Phi(W)$ satisfies the permutability condition \eqref{eq: permutability}, then $A$ inherits a natural structure of $(W,*)$-algebra defined as follows: for any $w\in W$ and $a\in A$
$$
wa:=L(a) \quad \text{and} \quad  aw:=R(a),
$$
where $\Phi(w)=(R,L)\in \M(A)$. 

 Thus, giving a $(W,*)$-algebra structure on $A$ is equivalent to giving a $*$-homomorphism $\Phi\colon W \to \M(A)$ whose image $\Phi(W)\subseteq \M(A)$ satisfies the condition of permutability.  Such a $*$-homomorphism is called the \emph{acting $*$-homomorphism} of $W$ on $A.$ 

An algebra $A$ may admit several $(W,*)$-algebra structures corresponding to distinct acting $*$-homomorphisms $\Phi\colon W\to\M(A).$  When it is necessary to specify the $*$-homomorphism defining the action, we write $A^\Phi$ for the algebra $A$ endowed with the $(W,*)$-action determined by $\Phi$.

Notice that the action of $W$ on $A$ induces an action of the $*$-subalgebra $\overline{W}=\Phi(W)$ of $\M(A)$, so $A$ can be regarded as a $(\overline{W},*)$-algebra. Thus, the action of $W$ on $A$ factors through $\overline{W}$. 
Moreover, distinct acting $*$-homomorphisms $ \Phi\colon W\to\M(A)$ with the same image $\overline{W}$ induce the same $(\overline{W},*)$-algebra structure on $A$.  Thus we make the following definition.

\begin{definition}\label{def: equivalence}
We say that $(W,*)$-algebra structures $A^\Phi$ and
$A^{\Phi'}$ are \emph{equivalent} if $\Phi(W)=\Phi'(W)$ as $*$-subalgebras of $\M(A)$.
\end{definition}

This defines an equivalence relation on the set of $(W,*)$-algebra structures on $A$.
Its equivalence classes are precisely determined by
the $*$-subalgebras of $\M(A)$ that occur as images of acting $*$-homomorphisms.
 Accordingly, throughout the paper, we describe a $(W,*)$-action on $A$ in terms of its image $\overline{W}$, and say that
$W$ acts on $A$ via $\overline{W}$. 

Whenever it is necessary to specify the image, we write $A^{\overline{W}}$ for $A$ endowed with a fixed $(W,*)$-action whose
image is $\overline{W}$. Thus, $A^{\overline{W}}$ always denotes a fixed
representative of the corresponding equivalence class.  The acting
$*$-homomorphism $\Phi$ is consequently understood to be part of the
data, even though it is not displayed in the notation
$A^{\overline{W}}$. We will write $\Phi$ explicitly only when necessary to avoid ambiguity.

From the arguments above and Proposition \ref{pro: unitary}, the following result holds.

\begin{proposition}\label{pro: unital subalgebra}
If $A^{\overline{W}}$ is $(W,*)$-algebra with unit $1_A$, then the action of $W$ on $A$ is induced by the left and right multiplication actions of a $*$-subalgebra $B$ of $A$ that is isomorphic to $\overline{W}$. Moreover, if $W$ is unital, $1_A\in B$.
\end{proposition}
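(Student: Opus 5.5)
The plan is to combine the acting $*$-homomorphism description of the $(W,*)$-structure with Proposition \ref{pro: unitary}. Since $A$ is unital, Proposition \ref{pro: unitary} gives that $\mu\colon A\to \IM(A)=\M(A)$, $\mu(m)=(R_m,L_m)$, is an isomorphism of $*$-algebras. The action of $W$ on $A$ is given by the acting $*$-homomorphism $\Phi\colon W\to\M(A)$ with image $\overline{W}=\Phi(W)$, which is a $*$-subalgebra of $\M(A)$. I will set
$$
B:=\mu^{-1}(\overline{W})\subseteq A.
$$

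The first step is to check that $B$ is a $*$-subalgebra of $A$ isomorphic to $\overline{W}$. This holds because $\mu$ is a $*$-isomorphism, so the preimage of a $*$-subalgebra is a $*$-subalgebra, and $\mu|_B\colon B\to\overline{W}$ is a $*$-isomorphism. The second step is to identify the action. For $w\in W$, I write $\Phi(w)=(\rho(w),\lambda(w))=(R_b,L_b)$ with $b=\mu^{-1}(\Phi(w))\in B$. Then
$$
wa=\lambda(w)(a)=ba \quad\text{and}\quad aw=\rho(w)(a)=ab \quad\text{for all } a\in A.
$$
So the action of $W$ is exactly left and right multiplication by elements of $B$, and every element of $B$ arises in this way. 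Permutability is automatic here, by the second part of Proposition \ref{pro: unitary} (or simply by associativity of $A$), so no compatibility condition is lost.

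Finally, if $W$ is unital, the standing assumption $1_Wa=a1_W=a$ gives $\Phi(1_W)=(\ide_A,\ide_A)=(R_{1_A},L_{1_A})=\mu(1_A)$. Therefore $1_A=\mu^{-1}(\Phi(1_W))\in B$.

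I do not expect a real obstacle. The only point needing care is that the isomorphism in Proposition \ref{pro: unitary} respects the involution: $\mu(m^*)=\mu(m)^*$, which was verified there via $R_{m^*}=L_m^*$ and $L_{m^*}=R_m^*$. This is what makes $B$ closed under $*$ and ensures that $B\cong\overline{W}$ holds as $*$-algebras and not merely as algebras.
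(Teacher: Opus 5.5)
Your proposal is correct and follows essentially the paper's route. The paper obtains the statement directly from the acting $*$-homomorphism description together with Proposition \ref{pro: unitary}. Concretely, it pulls $\overline{W}=\Phi(W)$ back along the $*$-isomorphism $\mu\colon A\to\IM(A)=\M(A)$, and in the unital case it uses $\Phi(1_W)=(\ide_A,\ide_A)=\mu(1_A)$, exactly as you do.
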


\smallskip

In the last part of this section we will deal with finite dimensional $(W,*)$-algebras. Henceforth, from now on, $A$ will denote a finite dimensional $*$-algebra and $J=J(A)$ the Jacobson radical of $A$.

Recall that $A$ is called \textit{semisimple} if $J=0$ and ($*$-)\textit{simple} if it has no-nontrivial ($*$-)ideals and $A^2\neq 0$.

We next recall the Wedderburn–Malcev theorem for algebras with involution (see \cite[Theorem 3.4.4]{GiaZai}).

\begin{theorem}\label{th: WM invol} Let $F$ be a field of characteristic 0.
	 Any finite dimensional $F$-algebra $A$ with involution $*$ has a decomposition of the form
     \begin{equation}\label{eq: W-M decomposition}
         A=B+J
     \end{equation}
	where the Jacobson radical $J=J(A)$ is a $*$-ideal and $B$ is a maximal semisimple $*$-subalgebra of $A$. If the field $F$ is algebraically closed, then
	$$
	B=B_1\oplus \cdots \oplus B_m,
	$$
    where the $B_i$s are $*$-simple subalgebras of $A$.
\end{theorem}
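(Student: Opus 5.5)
The plan is to reduce to the classical Wedderburn–Malcev theorem and then symmetrize the splitting so that it becomes $*$-invariant.

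\emph{Step 1: $J$ is a $*$-ideal.} For any ideal $I$, the set $I^*$ is again an ideal. Moreover $J^*$ is nilpotent, since $(J^*)^k=(J^k)^*$. Hence $J^*\subseteq J$ by maximality of $J$ among nilpotent ideals, and applying $*$ once more gives $J^*=J$.

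\emph{Step 2: a $*$-invariant Wedderburn complement.} Since $*$ preserves $J$, it induces an involution on the semisimple algebra $\bar A=A/J$. Let $G=\{\mathrm{id},*\}$, a group of order $2$ whose nontrivial element acts by an antiautomorphism. One standard route is to pass to $A^{\op}$: the map $*$ is an isomorphism $A\to A^{\op}$, so the semidirect-type structure $A\rtimes G$ does not make sense directly. Instead I would use the following concrete argument.

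By the classical Wedderburn–Malcev theorem, pick some semisimple complement $B_0$. Then $B_0^*$ is also a complement, so by Malcev conjugacy $B_0^*=(1+j)B_0(1+j)^{-1}$ for some $j\in J$ (computed in the unitization).

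I would then prove the existence of a $*$-stable complement by induction on the nilpotency index of $J$.
\begin{itemize}
\item If $J^2=0$, complements correspond to derivation-like maps $\delta\colon \bar A\to J$, that is, splittings $s=s_0+\delta$. The condition that $s$ is a $*$-map is an affine condition. Averaging, $s'=\tfrac12\bigl(s+{*}\circ s\circ{*}\bigr)$, yields a splitting that is still an algebra map. This uses $J^2=0$ and the fact that $*$ reverses products, so that ${*}\circ s\circ{*}$ is again a homomorphism; the division by $2$ uses characteristic $0$.
\item In general, apply the $J^2=0$ case to $A/J^2$ to get a $*$-stable complement there. Take its preimage $A_1$, a $*$-subalgebra with radical $J^2$, and recurse. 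The preimage has strictly smaller radical nilpotency, and the resulting complement is maximal semisimple.
\end{itemize}

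\emph{Step 3: the algebraically closed case.} Decompose $B$ into simple components $C_1\oplus\cdots\oplus C_r$. The involution permutes the $C_i$ because it maps minimal ideals to minimal ideals. Grouping each orbit gives $*$-simple pieces: either a single $C_i$ with $C_i^*=C_i$, or a pair $C_i\oplus C_i^*$. These are exactly the $*$-simple subalgebras $B_1,\dots,B_m$.

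The main obstacle is Step 2. Specifically, one must check that the averaged splitting is multiplicative when $J^2=0$. This amounts to showing that cross terms vanish, which follows because $(s(x)-{*}s({*}x))\in J$ and products of two such terms lie in $J^2=0$.
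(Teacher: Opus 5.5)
Your proof is correct. The paper gives no proof of its own: it quotes the statement from Giambruno--Zaicev (Theorem 3.4.4), which rests on Taft's theorem on invariant Wedderburn factors. Your argument is the standard one behind that result: reduce to $J^2=0$, then average a splitting over $\{\mathrm{id},*\}$. So you have supplied a self-contained proof of what the paper only cites.

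The essential points all hold.
\begin{itemize}
\item Step 1 gives $J^*=J$. Hence $*$ descends to $\bar A$ and $\pi$ is a $*$-map, so $x\mapsto s(x^*)^*$ is again a splitting.
\item When $J^2=0$, the average of two splittings $s_1,s_2$ is multiplicative exactly because $(s_1(x)-s_2(x))(s_1(y)-s_2(y))\in J^2=0$, which is the point you identify.
\item The preimage $A_1$ has radical exactly $J^2$, so the recursion gives $A_1=B\oplus J^2$ and hence $A=B+J^2+J=B\oplus J$.
\item Maximality, which you only assert, follows in one line. If $B'\supseteq B$ is semisimple, then $B'=B\oplus(B'\cap J)$, and $B'\cap J$ is a nilpotent ideal of $B'$, hence zero.
\end{itemize}

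The citation is shorter and rests on a more general statement, Taft's theorem for any finite group of automorphisms and anti-automorphisms. Your route shows exactly where characteristic $0$ enters: separability of $A/J$ for the classical splitting, and division by $2$ in the averaging. Your Step 3 also shows that the decomposition of $B$ into $*$-simple pieces holds over any field, not just an algebraically closed one. The orbits of $*$ on the simple components give either a single $*$-stable $C_i$ or a pair $C_i\oplus C_i^*$.

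The remarks about $A^{\mathrm{op}}$ and Malcev conjugacy are never used and can be deleted.
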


Given a finite dimensional $(W,*)$-algebra $A$ and a subset $I\subseteq A$, we consider
 $$
 WI=\{wa : w\in W, \text{ } a\in I\} \quad \text{ and } \quad IW=\{aw :  a\in I, \text{ } w\in W\}.
 $$
 We say that $I$ is $W$-invariant if $IW,WI\subseteq I$. A $*$-ideal ($*$-subalgebra) of $I$ of $A$ is called $(W,*)$-ideal ($(W,*)$-subalgebra) if it is $W$-invariant.

 Notice that the Wedderburn-Malcev decomposition may not be $W$-invariant. Indeed, suppose that $A$ admits a decomposition of the form \eqref{eq: W-M decomposition} with $J\neq 0$ and the acting $*$-homomorphism $\Phi$ of $W$ on $A$ satisfies $\Phi(W)\subseteq \IM(J)$, i.e., $W$ acts on $A$ via the $*$-ideal $J$ by left and right multiplications. In this case, we have $BW,\,WB\subseteq J$, and hence $B$ is not $W$-invariant. Nevertheless, the following result holds.

 \begin{theorem}\label{th: WM for W*A}
 	Let $W$ be a $\dia$-algebra over a field $F$ of characteristic zero and $A$ be a finite dimensional $(W,*)$-algebra. Then the Jacobson radical $J$ of $A$ is a $(W,*)$-ideal of $A$. Moreover, if $F$ is algebraically closed, then $A=B_1\oplus \cdots \oplus B_m+J$, where $B_i$ is a $*$-simple subalgebra such that $WB_i,\, B_iW\subseteq B_i+J$ for all $1\leq i \leq m$.
 	\end{theorem}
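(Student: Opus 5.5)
The proof has two parts. First we show that $J$ is a $(W,*)$-ideal. Then we combine Theorem \ref{th: WM invol} with the $W$-invariance of $J$ to get the decomposition.

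\textbf{$J$ is $W$-invariant.} The radical $J$ is already a $*$-ideal by Theorem \ref{th: WM invol}, so only $WJ,JW\subseteq J$ needs proof. Fix $w\in W$ and $j\in J$. We show that $I=J+WJ+JW+WJW$, which is a subspace, is a nilpotent ideal of $A$.

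\begin{itemize}
\item \emph{$I$ is an ideal.} For $a\in A$, the relations \eqref{eq: compat} give $a(wj)=(aw)j\in J$, $(wj)a=w(ja)\in WJ$, $(jw)a=j(wa)\in J$, and $a(jw)=(aj)w\in JW$. The $WJW$ terms are handled similarly, using \eqref{eq: compatibility bimodule}.
\item \emph{$I^2\subseteq J$.} Any product of two elements of $I$ can be rearranged by \eqref{eq: compat} so that a $W$ sitting between the two factors is absorbed into one of them. For example, $(wj)(j'w')=w\big((jj')w'\big)$ with $jj'\in J$. More carefully, $(w_1j_1)(w_2j_2)=w_1\big(j_1(w_2j_2)\big)$, and $j_1(w_2j_2)=(j_1w_2)j_2\in J$, because $j_1w_2\in A$ and $J$ is an ideal.
\item \emph{Iterating.} The same rearrangement shows that $I^{2k}$ lies in $W^{\epsilon}J^kW^{\epsilon'}$, where $\epsilon,\epsilon'\in\{0,1\}$ indicate an optional outer factor from $W$. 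Since $J$ is nilpotent, $I$ is nilpotent.
\end{itemize}

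A nilpotent ideal is contained in $J$, so $WJ,JW\subseteq I\subseteq J$.

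There is a cleaner alternative using multipliers. Set $\Phi(w)=(R,L)$. Then $L(J)$ is a right ideal, since $L(j)b=L(jb)$. It satisfies $L(j)^{n}=L\big(j\,(L(j))^{n-1}\big)$, which gives nilpotency. So $L(J)+AL(J)$ is a nilpotent left ideal, because $bL(j)=R(b)j$ with $R(b)\in A$. Hence $L(J)\subseteq J$, and symmetrically $R(J)\subseteq J$. The main obstacle in either version is verifying nilpotency carefully when $A$ may be non-unital. I will use this multiplier computation as the core argument.

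\textbf{Decomposition.} Now let $F$ be algebraically closed. Theorem \ref{th: WM invol} gives $A=B_1\oplus\cdots\oplus B_m+J$ with each $B_i$ a $*$-simple subalgebra. The quotient $\bar A=A/J$ is a semisimple $(W,*)$-algebra, because $J$ is a $(W,*)$-ideal. It decomposes as $\bar A=\bar B_1\oplus\cdots\oplus\bar B_m$, where $\bar B_i\cong B_i$ are its minimal $*$-ideals.

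Each $\bar B_i$ is $*$-simple, so it is unital with unit $\bar e_i$, a central idempotent of $\bar A$. For $b\in \bar B_i$ we have $wb=w(\bar e_i b)=(w\bar e_i)b$, which lies in $\bar B_i$ because $\bar B_i$ is an ideal of $\bar A$. Similarly $bw\in\bar B_i$.

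Lifting back to $A$ gives $WB_i,\,B_iW\subseteq B_i+J$ for every $i$, which completes the proof.
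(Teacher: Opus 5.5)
Your proof is correct in substance, but it takes a different route from the paper. The paper gives no direct argument. It simply invokes Theorem~\ref{th: WM invol} together with \cite[Theorem 3.3]{MarRiz}, the corresponding statement for $W$-algebras without involution, and leaves the passage from simple to $*$-simple components implicit.

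You instead reprove the content of that cited result in two steps:
\begin{itemize}
\item $W$-invariance of $J$ from the single relation $a(wj)=(aw)j$, i.e.\ $A\,L(J)=R(A)J\subseteq J$;
\item stability of each component modulo $J$, by passing to $\bar A=A/J$ and using the central unit $\bar e_i$ of each $\bar B_i$.
\end{itemize}
Your version is self-contained and uses only the relations \eqref{eq: compat}. It does not need $W$ to be unital, holds for any Wedderburn--Malcev complement, and treats the $*$-simple components directly, with no matching to simple ones. The paper's route is shorter but rests entirely on the external theorem.

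You should, however, repair the nilpotency step, which as written contains two non sequiturs.

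\emph{First version.} Your computations do not show $I^2\subseteq J$. The product $(wj)(j'w')$ lands in $WJW$ and $(w_1j_1)(w_2j_2)$ lands in $WJ$, and neither is yet known to lie in $J$. The bound $I^{2k}\subseteq W^{\epsilon}J^kW^{\epsilon'}$ is also not justified by ``the same rearrangement'': $(jw)(w'j')=j\big((ww')j'\big)$ is only visibly in $J$, not in $J^2$.

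\emph{Multiplier version.} The formula $L(j)^n=L(jL(j)^{n-1})$ does not by itself give nilpotency. Using $aL(b)=R(a)b$, it unwinds to $L(R(j)^{n-1}j)$, and $R(j)\in J$ is exactly what you are trying to prove.

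What actually works is the identity you state next. Since $bL(j)=R(b)j\in J$, you get $A\,L(J)\subseteq J$, and therefore $(L(J)+AL(J))^2\subseteq A\,(L(J)+AL(J))\subseteq J$. Because $J$ is nilpotent, this ideal is nilpotent and hence lies in $J$. In elementwise form: $J+WJ$ is an ideal with $(J+WJ)^2\subseteq A(J+WJ)\subseteq J$, and symmetrically $J+JW$ is an ideal with $(J+JW)^2\subseteq (J+JW)A\subseteq J$. With this made explicit, the argument is complete.
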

\begin{proof}
The result follows from Theorem \ref{th: WM invol} and by \cite[Theorem 3.3]{MarRiz}.
\end{proof}

 		We say that a $(W,*)$-algebra $A$ is $(W,*)$-simple if $A^2\neq 0$ and $A$ has no non-zero $(W,*)$-ideals. As a consequence of Theorems \ref{th: WM invol} and \ref{th: WM for W*A}, we obtain the following result, which will serve to describe the structure of $(W,*)$-simple algebras.
 		
 		\begin{corollary}\label{cor: *simple W*simple}
 			Let $A$ be a finite dimensional $(W,*)$-algebra over an algebraically closed field $F$ of characteristic zero. Then $A$ is $(W,*)$-simple if and only if it is $*$-simple.
 		\end{corollary}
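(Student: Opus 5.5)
Both implications come down to how $(W,*)$-ideals relate to $*$-ideals, together with Theorems \ref{th: WM invol} and \ref{th: WM for W*A}.

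\emph{$*$-simple $\Rightarrow$ $(W,*)$-simple.} Every $(W,*)$-ideal is by definition a $*$-ideal. So a $*$-simple algebra, which has no nontrivial $*$-ideals and satisfies $A^2\neq 0$, has no nontrivial $(W,*)$-ideals either. Hence it is $(W,*)$-simple.

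\emph{$(W,*)$-simple $\Rightarrow$ $*$-simple.} Let $A$ be $(W,*)$-simple.
\begin{enumerate}
\item By Theorem \ref{th: WM for W*A}, the Jacobson radical $J$ is a $(W,*)$-ideal, so $J=0$ or $J=A$. Since $A$ is finite dimensional, $J$ is nilpotent. If $J=A$, then $A$ would be nilpotent; moreover $A^2$ is a $*$-ideal, and it is $W$-invariant because $w(ab)=(wa)b$ and $(ab)w=a(bw)$. Since $A^2\neq 0$, this forces $A^2=A$, contradicting nilpotency. Hence $J=0$ and $A$ is semisimple.
\item Theorem \ref{th: WM invol} then gives $A=B_1\oplus\cdots\oplus B_m$ with each $B_i$ $*$-simple. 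Since $J=0$, Theorem \ref{th: WM for W*A} gives $WB_i,\,B_iW\subseteq B_i$.
\item Each $B_i$ is a $*$-ideal of $A$, since $B_iB_j=0$ for $i\neq j$. It is also $W$-invariant, so it is a nonzero $(W,*)$-ideal. Simplicity therefore forces $m=1$ and $A=B_1$, which is $*$-simple.
\end{enumerate}

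The main obstacle is the $W$-invariance of the $B_i$ in step 2. If one prefers not to rely on the second part of Theorem \ref{th: WM for W*A}, it can be checked directly using that $B_i$ is unital. For $w\in W$ and $b\in B_i$, write $b=e_ib$ with $e_i$ the unit of $B_i$. Then
\[
wb=w(e_ib)=(we_i)b,
\]
so $wb\in AB_i\subseteq B_i$. The right action is handled symmetrically.
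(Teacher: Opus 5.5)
Your proof is correct and follows the route the paper indicates: the paper gives no separate proof and simply derives the corollary from Theorem \ref{th: WM invol} and Theorem \ref{th: WM for W*A}, exactly as you do — $J$ is a $(W,*)$-ideal, which forces semisimplicity, and the $W$-invariance of the $*$-simple components then forces $m=1$. Your write-up also supplies details the paper leaves implicit: ruling out $J=A$ via the $(W,*)$-ideal $A^2$, and the direct check $wb=(we_i)b\in B_i$.
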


 		Now, in order to describe simple $(W,*)$-algebras, we recall some notions from the theory of $*$-algebras.
 		
 		Consider the matrix algebra $A=M_n(F)$. It is well known that we can define on $A$ two involutions:
 		\begin{itemize}
 			\item the transpose involution $t$, that is the usual matrix transpose;
 			\item the symplectic involution $s$ defined, only when $n=2k$ is even, as
 			$$
 			\begin{pmatrix}
 				A & B\\
 				C & D\\
 			\end{pmatrix}^s=\begin{pmatrix}
 			D^t & -B^t\\
 			-C^t & A^t\\
 			\end{pmatrix}
 			$$
 			where $A, B, C, D\in M_{k}(F)$.
 		\end{itemize}
 		
 		From Remark \ref{rk: A op}, given an algebra $A$ we can consider the algebra $A^{op}$. Thus, the algebra $B=A \oplus A^{op}$ is a $*$-algebra endowed with the so-called exchange involution $ex$ given by
 		$$
 		(a,b)^{ex} = (b,a).
 		$$
 		
 		Finally, we can recall the characterization of $*$-simple algebras (see \cite{GiaZai,Row}).

        \begin{theorem}\label{th: *-simple characterization}
            Let $F$ be an algebraically closed field of characteristic zero. Then, up to isomorphisms, all finite dimensional $*$-simple algebras are the following ones
 		\begin{itemize}
 			\item $(M_n(F),t)$, the algebra of $n \times n$ matrices with transpose involution;
 			\item $(M_{2k}(F),s)$, the algebra of $2k \times 2k$ matrices with symplectic involution;
 			\item $(M_n(F)\oplus M_n(F)^{op},ex)$, the direct sum of the algebra of $n\times n$ matrices and its opposite algebra with the exchange involution.
 		\end{itemize}
        \end{theorem}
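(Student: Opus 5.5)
We argue by first reducing to the semisimple case, then sorting by how the involution acts on the simple components, and finally normalising an involution of a full matrix algebra via Skolem--Noether. Let $A$ be a finite dimensional $*$-simple algebra. The Jacobson radical $J$ is a $*$-ideal by Theorem \ref{th: WM invol}, so $J=0$ or $J=A$. If $J=A$, then $A$ is nilpotent and $A^2$ is a $*$-ideal. Since $A^2\neq 0$, $*$-simplicity forces $A^2=A$. This gives $A=A^k=0$ for large $k$, a contradiction. Hence $A$ is semisimple, and by Wedderburn's theorem over the algebraically closed field $F$ we have $A=\bigoplus_i A_i$ with $A_i\cong M_{n_i}(F)$.

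Since $*$ is an anti-automorphism, it maps simple ideals to simple ideals and so permutes the $A_i$. Each orbit of this permutation spans a $*$-ideal, so $*$-simplicity leaves only two cases: there is a single component with $A^*=A$, or there are exactly two components with $A=I\oplus I^*$.

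In the second case, $*$ restricts to an anti-isomorphism $I\to I^*$, so $I^*\cong I^{op}$. I would show that $(a,b)\mapsto a+b^*$, from $M_n(F)\oplus M_n(F)^{op}$ to $A$ (identifying $I$ with $M_n(F)$), is an isomorphism of algebras intertwining the exchange involution $ex$ with $*$. This is a direct check.

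In the first case, $A=M_n(F)$ and the map $a\mapsto (a^*)^t$ is an automorphism. By Skolem--Noether it is inner, so $a^*=Qa^tQ^{-1}$ for some invertible $Q$. Imposing $(a^*)^*=a$ gives $Q(Q^t)^{-1}$ central, so $Q^t=\lambda Q$. Applying transpose again gives $\lambda^2=1$, hence $\lambda=\pm1$.

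\begin{itemize}
\item If $Q$ is symmetric, then over an algebraically closed field of characteristic $\neq 2$ we can write $Q=PP^t$. Conjugation by $P^{-1}$ is then an isomorphism of $*$-algebras $(M_n(F),*)\cong(M_n(F),t)$.
\item If $Q$ is skew, invertibility forces $n=2k$. By the normal form of nondegenerate alternating forms, $Q=PJ_0P^t$ with $J_0=\begin{pmatrix}0&I_k\\-I_k&0\end{pmatrix}$. Under the same kind of conjugation, $*$ becomes $a\mapsto J_0a^tJ_0^{-1}$, which is exactly the symplectic involution $s$.
\end{itemize}

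The step I expect to be the main obstacle is the last one: checking that the change of variables $a\mapsto P^{-1}aP$ really transports $Qa^tQ^{-1}$ to the standard form. This is where the congruence normal forms of symmetric and alternating bilinear forms, and algebraic closedness (to take square roots in diagonalising $Q$), are used. The rest is routine.
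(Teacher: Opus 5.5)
Your proof is correct. The paper gives no proof of this theorem; it only cites \cite{GiaZai,Row}. Your argument is the standard textbook one: semisimplicity because $J$ is a $*$-ideal, then the orbit analysis of the simple components under $*$, then Skolem--Noether combined with the congruence normal forms of symmetric and alternating forms.

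The step you single out as the main obstacle is a one-line check. Put $\theta(a)=P^{-1}aP$. If $Q=PP^t$, then $\theta(a^*)=P^t a^t (P^t)^{-1}=\theta(a)^t$. If $Q=PJ_0P^t$, then $\theta(a^*)=J_0\theta(a)^tJ_0^{-1}$. Multiplying out blocks gives $J_0\begin{pmatrix}X&Y\\Z&T\end{pmatrix}^{t}J_0^{-1}=\begin{pmatrix}T^t&-Y^t\\-Z^t&X^t\end{pmatrix}$, which is exactly the paper's formula for $s$.

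If you read the statement as also claiming that the three listed algebras are $*$-simple, add the converse. The first two are simple. In $M_n(F)\oplus M_n(F)^{op}$ the only proper nonzero ideals are the two summands, and $ex$ interchanges them.
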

 		
 		From this characterization and Corollary \ref{cor: *simple W*simple} and Proposition \ref{pro: unital subalgebra}, we have the following result.
 		
 		\begin{corollary}
 			If $A$ is a finite dimensional $(W,*)$-simple algebra over an algebraically closed field $F$ of characteristic zero, then $A$ is isomorphic to one of the following $(W,*)$-algebras:
 			\begin{itemize}
 				\item $M_n(F)$ with transpose or symplectic involution,
 				\item $M_n(F)\oplus M_n(F)^{op}$ with exchange involution $(a,b)^*=(b,a)$,
 			\end{itemize}
 			with the $W$-action given by left and right multiplication of an appropriate unital $*$-subalgebra.
 		\end{corollary}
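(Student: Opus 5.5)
The corollary follows by chaining three earlier results; the only real work is in the last step.

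First, since $A$ is $(W,*)$-simple, Corollary~\ref{cor: *simple W*simple} shows that $A$ is $*$-simple. Theorem~\ref{th: *-simple characterization} then identifies the underlying $*$-algebra of $A$ with one of $(M_n(F),t)$, $(M_{2k}(F),s)$, or $(M_n(F)\oplus M_n(F)^{op},ex)$. In each case $A$ is unital: the unit is the identity matrix, or $(I_n,I_n)$ in the exchange case. Moreover, the unit is symmetric under the involution, since $1^*$ is also a unit and units are unique.

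Second, we apply Proposition~\ref{pro: unital subalgebra} to $A^{\overline{W}}$. Because $A$ is unital, Proposition~\ref{pro: unitary} gives $\M(A)=\IM(A)\cong A$ as $*$-algebras via $\mu(m)=(R_m,L_m)$. Hence $\overline{W}=\Phi(W)$ is a $*$-subalgebra of $\IM(A)$, and $B:=\mu^{-1}(\overline{W})$ is a $*$-subalgebra of $A$. For $w\in W$ with $\Phi(w)=(R_b,L_b)$, the action is $wa=ba$ and $aw=ab$. So the $W$-action is left and right multiplication by elements of $B$, and $B\cong \overline{W}$ as $*$-algebras.

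Third, we must justify the adjective ``unital'' for $B$, which is the only delicate point. If $W$ is unital, then $\Phi(1_W)=(\ide_A,\ide_A)=\mu(1_A)$, so $1_A\in B$; this is the second part of Proposition~\ref{pro: unital subalgebra}. In general, I would read ``appropriate unital $*$-subalgebra'' as referring to this case.

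If $W$ is not unital, $B$ is still a finite-dimensional $*$-subalgebra, but it need not contain $1_A$ and need not be unital at all (for instance, it could be nilpotent). So I would state that clause under the standing convention that $W$ is unital. Alternatively, one can observe that the $W$-action is then multiplication by the possibly non-unital $*$-subalgebra $B$. Either way, the isomorphism of $(W,*)$-algebras is the $*$-isomorphism from Theorem~\ref{th: *-simple characterization}. The $W$-action is transported along this isomorphism by replacing $B$ with its image, and this preserves the compatibility relations~\eqref{eq: compatibility involution}.
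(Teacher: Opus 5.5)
Your proposal is correct and follows the paper's route exactly. Corollary~\ref{cor: *simple W*simple} gives $*$-simplicity, and Theorem~\ref{th: *-simple characterization} gives the three models. Proposition~\ref{pro: unital subalgebra}, which you essentially re-derive from Proposition~\ref{pro: unitary}, then gives the action by multiplication of a $*$-subalgebra $B\cong\overline{W}$.

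Your caveat about the word ``unital'' is also justified. Proposition~\ref{pro: unital subalgebra} only places $1_A$ in $B$ when $W$ is unital. For non-unital $W$, the subalgebra $B$ can even be nilpotent: take $B=Fb$ with $b$ a nonzero symmetric square-zero matrix in $(M_2(F),t)$, which exists because $F$ is algebraically closed.
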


	\section{Free $(W,*)$-algebra and generalized $*$-identities}
 		
From now on, $W$ will be a unital $*$-algebra endowed with an involution $\sharp$. 

Let $A$ and $B$ be two $(W,*)$-algebras, endowed with the involutions
$\ast_A$ and $\ast_B$, respectively. A $(W,*)$-homomorphism $\phi \colon A \to B$  must satisfy the conditions $ \phi(a^{*_A})=\phi(a)^{*_B}$ and $\phi(wav)=w\phi(a)v$, for all $a\in A$, $w,v\in W$.

        We start by constructing the free $(W, *)$-algebra $W\langle X,* \rangle$. To this end, given two disjoint countable infinite sets of variables $X: =\{x_1, x_2, \ldots \}$ and $X^*: =\{x_1^*, x_2^*, \ldots \}$, we consider the free $W$-algebra $W\langle X\cup X^* \rangle$ freely generated by $X\cup X^*$ over $F$. Starting with the given involution $\dia$ on $W$, we induce an involution $*$ on $W\langle X\cup X^* \rangle$ by defining, for all $w_1,w_2 \in W$ and $x_i\in X$,
 		$$
 		(w_1x_iw_2)^*:=w_2^{\dia}x_i^*w_1^{\dia}.
 		$$
In this way $W\langle X\cup X^* \rangle$ become the free $(W,*)$-algebra over $X$ which we denote by $W\langle X, *\rangle$, and its elements are called \textit{generalized $*$-polynomials} or \textit{$(W,*)$-polynomials}.
        Thus, it is uniquely determined up to isomorphism by the following universal property:
 		given a $(W,*)$-algebra $A$, any map $\varphi\colon X \to A$ can be uniquely extended to a homomorphism of $(W,*)$-algebras $\bar{\varphi}\colon W \langle X, * \rangle \to A$, called \textit{evaluation} of $W\langle X,*\rangle$ in $A$. Notice that $\bar{\varphi}$ depends also on the acting $*$-homomorphism $\Phi$ of $W$ on $A$, so   we will denote the evaluation $\Bar{\varphi}$ by $\Bar{\varphi}_{\Phi}$, or simply $\varphi_{\Phi}$, when necessary to clarify which action we are considering.

        \smallskip

        Since  $W\langle X,* \rangle$ is, in particular, a $*$-algebra, then it decomposes as $W\langle X,* \rangle=W\langle X,* \rangle^+ \oplus W\langle X,* \rangle^-$ where the $(W,*)$-polynomials of $W\langle X,*\rangle^+$ are called symmetric, and those in $W\langle X,*\rangle^-$ are called skew-symmetric.
Since $W$ has unit $1_W$, we set $1_Wx_i1_W=x_i$ and $1_Wx_i^*1_W=x_i^*$, $i\geq 1.$ Thus, if we define
 			$$
 		x_i^+:=x_i+x_i^* \quad \text{ and } \quad x_i^-:=x_i-x_i^*, \quad i\geq 1,
 		$$
 		then $W\langle X,*\rangle$ may be regarded as the free $(W,*)$-algebra on the two sets $X^+=\{x_1^+,x_2^+,\ldots\}$ and $X^-=\{x_1^-,x_2^-,\ldots\}$, where the variables from $X^+$ are symmetric, i.e., $x_i^+\in W\langle X, * \rangle^+,$ $i\geq 1$, and the variables from $X^-$ are skew-symmetric, i.e., $x_i^-\in W\langle X, * \rangle^-$, $i\geq 1.$

Since $W$ has unity, fix a basis $\mathcal{B}_W:=\{w_i \, : \, i\geq 0\}$ of $W$, a vector-space basis of $W\langle X, * \rangle$ is
 	  $$
 	  \mathcal{B}_{W\langle X, *\rangle}= \{w_{i_0}x_{j_1}^{\epsilon_{1}}w_{i_1}x_{j_2}^{\epsilon_{2}}w_{i_2}\cdots w_{i_{n-1}}x_{j_n}^{\epsilon_{n}}w_{i_{n}} \; : \, \epsilon_k\in \{+,-\},\, w_{i_l}\in \mathcal{B}_{W},\, 1\leq k \leq n, \, 0\leq l \leq n,\, n\geq 1  \}.
 $$

 	  Given a $(W,*)$-algebra $A$, a generalized $*$-polynomial $ f=f(x_{1}^+,\ldots,x_{r}^+,x_{1}^-,\ldots ,x_{{s}}^-)\in W\langle X,*\rangle $
 	  is a \emph{$(W,*)$-identity}, or a \emph{generalized $*$-identities} of $A$, and we write $f\equiv 0$, if $f(a_{1}^+,\ldots,a_{r}^+,a_{{1}}^-,\ldots,a_{{s}}^-)=0$
 	  for all choices of $a_{1}^+,\ldots,a_{r}^+ \in A^+$, $a_{{1}}^-,\dots,a_{s}^- \in A^-$. We denote by $\I^{W,*}(A)$ the set of generalized $*$-identities of $A$, and it forms a $T_W^*$-ideal of $W\langle X,* \rangle$, that is,  a $(W,*)$-ideal invariant under all $(W,*)$-algebra endomorphisms of $W\langle X,*\rangle$. Given a set $S\subseteq W\langle X,*\rangle$, by $\langle S\rangle_{T_W^*}$ we denote the smallest $T_W^*$-ideal containing $S$.

\smallskip

 	 	For $n\geq 1$, we denote by $P_n^{W,*}$ the vector space of multilinear $(W,*)$-polynomials in the variables $x_1^+,x_1^-,\ldots,x_n^+,x_n^-$, that is
 	 	\begin{equation*}\label{eq: PnW*}
 	 		P_n^{W,*}: = \spa_F\{w_{i_0}x_{\sigma (1)}^{\epsilon_1} w_{i_1}x_{\sigma(2)}^{\epsilon_2}w_{i_2}\cdots w_{i_{n-1}}x_{\sigma(n)}^{\epsilon_n}w_{i_{n}}\, : \, \epsilon_k\in \{+,-\},\, w_{i_l}\in \mathcal{B}_{W},\, 1\leq k \leq n,\, 0\leq l \leq n,\, \sigma \in S_{n} \}.
 	 	\end{equation*}

\begin{remark}
    Since $W$ is a $*$-algebra, we can consider a basis $\mathcal{B}_W$ of $W$ which is the union $\mathcal{B}_W=\mathcal{B}_{W^+}\cup \mathcal{B}_{W^-}$ of bases of $W^+$ and $W^-$, respectively, where $\mathcal{B}_{W^+}:=\{w_i^+ \, : \, i\geq 1\}$ and $\mathcal{B}_{W^-}:=\{w_i^- \, : \, i\geq 1\}$. Then, for $n \geq 1$ we can write
    \begin{align*}
        P_n^{W,*} = \spa_F\{w_{i_0}^{\eta_0}x_{\sigma (1)}^{\epsilon_1} w_{i_1}^{\eta_1}x_{\sigma(2)}^{\epsilon_2}w_{i_2}^{\eta_2}\cdots w_{i_{n-1}}^{\eta_{n-1}}x_{\sigma(n)}^{\epsilon_n}w_{i_{n}}^{\eta_n}\, : \, \epsilon_k,\eta_l\in \{+,-\},\, w_{i_l}^{\eta_l}\in \mathcal{B}_{W^{\eta_l}},  \ \ \ &\\
         1\leq k \leq n,\, 0\leq l \leq n,\, \sigma \in S_{n}  &\ \}.
    \end{align*}
\end{remark}

 	 Since $\mbox{char}F = 0$, a standard linearization argument shows that the $T_W^*$-ideal $\I^{W,*}(A)$ is completely determined by its multilinear generalized $*$-polynomials. Thus, we can consider the quotient space
 	  $$
 	  P_n^{W,*}(A): =\frac{P_n^{W,*}}{P_n^{W,*}\cap \I^{W,*}(A)},
 	  $$
 	  and the \textit{$n$-th $(W,*)$-codimension}, or \textit{generalized $*$-codimension}, of $A$ is defined as
 	  $$
 	  c_n^{W,*}(A): = \dim_F P_n^{W,*}(A).
 	  $$

 	   Note that $c_n^{W,*}(A)$ is not necessarily finite. However, if at least one between $W$ and $A$ is finite dimensional, then $c_n^{W,*}(A)$ is finite for all $n \geq 1$.

       Remark that when $W\cong F$ we are dealing with the ordinary $*$-identities. In this case we write $P_n^*:=P_n^{F,*}$, $\I^*(A):=\I^{F,*}(A)$ and $c_n^*(A):=c_n^{F,*}(A)$. More generally, since $W$ has unity, $F$ embeds naturally as a subalgebra of $W$ and it follows that $P_n^* \subseteq P_n^{W,*}$. Consequently, for any $(W,*)$-algebra $A$, the ordinary $*$-codimensions are bounded above by the generalized $*$-codimensions. That is,
    \begin{equation} \label{eq: lower}
        c_n^*(A) \leq c_n^{W,*}(A), \quad n \geq 1.
    \end{equation}

The study of multilinear generalized $*$-polynomial can be reduced to the one of smaller spaces in the following way. For $n\geq 1$ and $0\leq r \leq n$, define $P_{r,n-r}$ as the subspace of $P_n^{W,*}$ of multilinear $(W,*)$-polynomials in the variable $x_1^+, \ldots, x_r^+, x_{r+1}^-,\ldots, x_n^-$, i.e., the first $r$ variables are symmetric and the last $n-r$ are skew-symmetric. It is clear that $P_n^{W,*}$ is isomorphic to the direct sum of $\binom{n}{r}$ copies of $P_{r,n-r}$ with $0\leq r \leq n$ as well as that $P_n^{W,*}(A)$ inherits this decomposition. Thus, one sets
$$
P_{r,n-r}(A):=\frac{P_{r,n-r}}{P_{r,n-r}\cap \I^{W,*}(A)}
$$
and
$$
c_{r,n-r}(A):=\dim_F P_{r,n-r}(A),
$$
and as a consequence, for all $n\geq 1$ we have that
\begin{equation}\label{eq: decomp P_n^{W,*}(A)}
    P_n^{W,*}(A)\cong \bigoplus_{0\leq r \leq n}\binom{n}{r} P_{r,n-r}(A)
\end{equation}
and
\begin{equation}\label{eq: decomp codim}
    c_n^{W,*}(A)= \sum_{r=0}^n\binom{n}{r}c_{r,n-r}(A).
\end{equation}

We can attach another useful invariant to the spaces $P_{r,n-r}(A)$ in the following way. For $n \geq 1$ and $0\leq r\leq n$
we consider the left permutation action of the group $S_{r} \times S_{n-r}$ on the space  $P_{r,n-r}$ by letting $S_r$ act on $x_1^+, \ldots, x_r^+$ and $S_{n-r}\equiv S_{\{r+1, \ldots, n\}}$ on $ x_{r+1}^-,\ldots, x_n^-$. Since $\I^{W,*}(A)$ is invariant under this $S_r \times S_{n-r}$ action, we have that $P_{r,n-r}(A)$ is a $S_{r}\times S_{n-r}$ left module. The corresponding character, denoted by $\chi_{r,n-r}^{W,*}(A)$ is called \emph{$(r, n-r)$-th $(W,*)$-cocharacter}, or \emph{generalized $*$-cocharacter}, of $A$. 

Recall that the irreducible $(S_r \times S_{n-r})$-characters are the tensor products $\chi_\lambda \otimes \chi_\mu$ of the irreducible $S_r$-character $\chi_\lambda$ and $S_{n-r}$-character $\chi_\mu$, where $\lambda\vdash r$ and $\mu \vdash n-r$ are partitions.
Since $F$ is of characteristic zero, by complete reducibility we can write
\begin{equation}\label{eq: ch}
    \chi_{r,n-r}^{W,*}(A)=\sum_{(\lambda,\mu)\vdash (r,n-r)}m_{\lambda,\mu}(\chi_\lambda \otimes \chi_\mu)
\end{equation}
where $\lambda\vdash r$, $\mu \vdash n-r$, and $m_{\lambda, \mu}\geq 0$ is the multiplicity corresponding to $\chi_\lambda \otimes \chi_\mu$.

\smallskip

 	Since there is a one-to-one correspondence between $T_W^*$-ideals and varieties of $(W,*)$-algebras, it is often convenient to translate all the objects we have introduced to the language of varieties.
    A variety $\mathcal{V}$ of $(W,*)$-algebras generated by a $(W,*)$-algebra $A$ is denoted by $\V^{W,*}(A)$ and is called \textit{$(W,*)$-variety}, or \textit{generalized $*$-variety}. We define $\I^{W,*}(\mathcal{V}) : = \I^{W,*}(A)$. Moreover, the growth of $\mathcal{V}$ is the growth of the sequence $c_n^{W,*}(\mathcal{V}) : = c_n^{W,*}(A)$, $n \leq 1$, and $\chi_{r,n-r}^{W,*}(\mathcal{V}):= \chi_{r,n-r}^{W,*}(A)$, for all $n \geq 1$ and $0\leq r \leq n$, is the $(r,n-r)$th $(W,*)$-cocharacter of $\mathcal{V}$.

 \medskip

 We conclude this section with a result on the $(W,*)$-variety generated by finite-dimensional unital $(W,*)$-algebras, which will be useful in what follows.

    Let $A^{\overline{W}}$ be a finite-dimensional $(W,*)$-algebra and let $\Phi\colon W\to \M(A)$ be the corresponding acting $*$-homomorphism of $W$ on $A$, i.e., such that $\overline{W}=\Phi(W)$.
    Since $\overline{W}$ is also finite dimensional and $F$ is of characteristic $0$, the Wedderburn–Malcev decomposition for algebras with involution (Theorem \ref{th: WM invol}) yields
 $$
    \overline{W}=\overline{W}_{ss}+\bar{J}
 $$
 where $\overline{W}_{ss}$ is a semisimple $*$-subalgebra of $\overline{W}$ isomorphic to $\overline{W}/\bar{J}$ and $\bar{J}=J(\overline{W})$ is the Jacobson radical of $\overline{W}$. If $\pi \colon \overline{W}\rightarrow \overline{W}/\bar{J}$ denotes the canonical epimorphism, then, via the above identification, we may assume that $\pi \colon \overline{W} \rightarrow \overline{W}_{ss}$.

 Define a map $\Phi_{ss} \colon W \to \M(A)$ by $\Phi_{ss}  := \pi \Phi.$ Then $\Phi_{ss}$ is a homomorphism of $*$-algebras satisfying $\Phi_{ss}(W)=\overline{W}_{ss}$ and therefore determines another $(W,*)$-algebra structure on $A$. Therefore, $A^{\overline{W}_{ss}}$ is a $(W,*)$-algebra defined by the acting $*$-homomorphism $\Phi_{ss}$. 

 We can now state the following result, whose proof follows from that of \cite[Theorem 4.3]{MarRiz} with mild and natural adaptations.
 	
 \begin{theorem}\label{th: ss}
 Let $W$ be a unital algebra with involution over a field $F$ of characteristic zero, and $A^{\overline{W}}$ be a finite-dimensional unital
    $(W,*)$-algebra.
     If $\Bar{J}\subseteq \IM(J(A))$, where $\Bar{J}=J(\overline{W})$ and $J(A)$ are the Jacobson radicals of $\overline{W}$ and $A$, respectively, then $A^{\overline{W}_{ss}}\in \V^{W,*}(A^{\overline{W}}).$
 \end{theorem}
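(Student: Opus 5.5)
Following the statement, the plan is to adapt the argument of \cite[Theorem 4.3]{MarRiz} and keep track of the involution throughout. Since $A$ is unital, Proposition \ref{pro: unitary} gives $\M(A)=\IM(A)\cong A$ as $*$-algebras. So by Proposition \ref{pro: unital subalgebra} the action of $W$ is given by left and right multiplication by a $*$-subalgebra $B\subseteq A$ with $B\cong\overline{W}$. The hypothesis $\bar J\subseteq \IM(J(A))$ says that $J(B)\subseteq J(A)$. By Theorem \ref{th: WM invol} applied to $B$, we write $B=B_{ss}+J(B)$ with $B_{ss}$ a semisimple $*$-subalgebra, and $B_{ss}$ corresponds to $\overline{W}_{ss}$. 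The task is then to show $\I^{W,*}(A^{\overline{W}})\subseteq \I^{W,*}(A^{\overline{W}_{ss}})$, and by the linearization remark it suffices to treat multilinear $f\in P_{r,n-r}$.

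The first step is to embed $B_{ss}$ into a $*$-invariant Wedderburn--Malcev complement of $A$. Choose $A=A_{ss}+J(A)$ as in Theorem \ref{th: WM invol}, with $B_{ss}\subseteq A_{ss}$. This should follow from the $*$-version of the Malcev conjugacy theorem, applied to the semisimple $*$-subalgebra $B_{ss}$. For $b\in B$ write $b=\pi(b)+j(b)$ with $\pi(b)\in B_{ss}$ and $j(b)\in J(B)\subseteq J(A)$. Note that $\pi$ is a $*$-homomorphism, because $J(B)$ is a $*$-ideal.

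The second step is a degeneration (Rees algebra) argument that pushes the radical part of the action to zero. Let $J=J(A)$, which is nilpotent of index $q$. Consider
\[
R=\bigoplus_{k\ge 0} t^{k}J^{k}\subseteq A[t],\qquad J^0:=A,
\]
with the involution extended by $t^*=t$. Then $R$ is a $*$-subalgebra of $A\otimes F[t]$. Each $b\in B$ lies in degree $0$ of $R$, and left and right multiplication by $b$ preserve $R$, so $R$ is a $(W,*)$-algebra. Because $F[t]$ is commutative, $A[t]$ satisfies all identities of $A^{\overline{W}}$: a multilinear $f$ evaluated on elements $\sum_k t^k a_k$ is a $t$-linear combination of evaluations in $A$. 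Hence $R\in\V^{W,*}(A^{\overline{W}})$, and so does its quotient by the $W$-invariant $*$-ideal $tR\cap R$.

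The main obstacle is that this quotient is the associated graded algebra $\mathrm{gr}(A)$, not $A$. In $\mathrm{gr}(A)$ the radical part $j(b)$ acts trivially modulo $J$, but the multiplicative structure of $A$ is also degenerated. So this naive filtration would lose too much. My first attempt to fix this is to filter not by powers of $J$ but by the powers of the $*$-ideal $I$ of $A$ generated by $J(B)$, while declaring all of $A$ to have degree $0$. Concretely, I would take the twisted family of $(W,*)$-actions $\Phi_\lambda$ determined by $b\mapsto \pi(b)+\lambda\, j(b)$ for $\lambda\in F$. I would then check directly that $\Phi_\lambda$ is a $*$-homomorphism, using that $B_{ss}J(B)$, $J(B)B_{ss}$ and $J(B)^2$ lie in $J(B)$, together with a grading argument on $B$. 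Once that holds, every multilinear identity $f$ of $A^{\overline{W}}$ gives a polynomial in $\lambda$ of degree less than $q$ that vanishes at $\lambda=1$. Conjugating by the $*$-automorphisms $\exp(\mathrm{ad}\,u)$, $u\in J(A)$ skew-symmetric, should then let one move between the values $\lambda\neq0$ via a Vandermonde argument. Evaluating at $\lambda=0$ would give $f\in\I^{W,*}(A^{\overline{W}_{ss}})$, and hence $A^{\overline{W}_{ss}}\in\V^{W,*}(A^{\overline{W}})$.

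Verifying that this family stays inside the variety is the step I expect to require the most care. It is exactly where the assumption $\bar J\subseteq\IM(J(A))$ is used.
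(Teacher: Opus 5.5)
\textbf{There is a genuine gap.} Several parts of your setup are fine: the reduction to a $*$-subalgebra $B\subseteq A$ with $B=B_{ss}+J(B)$ and $J(B)\subseteq J(A)$, the choice of a $*$-invariant complement $A_{ss}\supseteq B_{ss}$, and your decision to abandon the Rees algebra because it only yields $\mathrm{gr}(A)$. The problem is that the step carrying the whole content of the theorem is not established, and the tools you name cannot establish it.

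First, the twisted maps are not multiplicative. For $b,b'\in B$,
\[
\Phi_\lambda(b)\Phi_\lambda(b')-\Phi_\lambda(bb')=(\lambda^2-\lambda)\,j(b)j(b'),
\]
which is nonzero for $\lambda\neq 0,1$ as soon as $J(B)^2\neq 0$. The inclusions $B_{ss}J(B),\,J(B)B_{ss},\,J(B)^2\subseteq J(B)$ do not help. Fixing this would need a grading of $B$ with $B_{ss}$ in degree $0$ and $J(B)$ in positive degrees, which a general finite-dimensional algebra does not have.

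Second, and more seriously, suppose $\Phi_\lambda$ were an action. You still only know that the $\lambda$-polynomial attached to $f$ vanishes at $\lambda=1$, which says nothing at $\lambda=0$. To run Vandermonde, you must show that $A^{\Phi_\lambda}$ and $A^{\Phi_\mu}$ satisfy the same identities for many values. Via conjugation, this means finding a $*$-automorphism $\theta$ of $A$ with $\theta\circ\Phi_\lambda=\Phi_\mu$; since $A$ is unital, any $(W,*)$-isomorphism must satisfy exactly this. Restricting to $b\in J(B)$ gives $\theta(b)=(\mu/\lambda)\,b$. But $\theta=\exp(\mathrm{ad}\,u)$ with $u\in J(A)$ is unipotent, so it has no eigenvalue $\mu/\lambda\neq 1$. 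These conjugations therefore never move between distinct values of $\lambda$. What your route really needs is a torus of automorphisms of $A$ fixing $B_{ss}$ and contracting $J(B)$, i.e. a suitable grading of $A$, and this is not available in general.

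As it stands, the proposal does not show $\I^{W,*}(A^{\overline{W}})\subseteq \I^{W,*}(A^{\overline{W}_{ss}})$. Concretely, choose a basis of $W$ adapted to $\Phi^{-1}(\overline{W}_{ss})$, $\Phi^{-1}(\bar J)$ and $\ker\Phi$, and split a multilinear identity $f$ of $A^{\overline W}$ as $f=f_0+f_1$:
\begin{itemize}
\item $f_0$ collects the monomials whose coefficients act through $B_{ss}$;
\item $f_1$ collects the monomials with at least one coefficient acting through $J(B)\subseteq J(A)$.
\end{itemize}
Evaluating $f$ on $A^{\overline{W}_{ss}}$ is the same as evaluating $f_0$ on $A^{\overline W}$, so you must show that $f_0$ vanishes on all of $A$. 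If $k$ arguments are radical and the rest lie in $A_{ss}$, then $f_0(a)=-f_1(a)\in J(A)^{k+1}$. For $k=0$ this forces $f_0(a)\in A_{ss}\cap J(A)=0$, but for $k\ge 1$ it is only a congruence, and an additional argument is needed there.

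The paper does not prove this via a deformation of the action. It states that the argument of \cite[Theorem 4.3]{MarRiz} carries over to the involutive setting with natural changes, such as taking $B_{ss}$, $J(B)$ and the complement in $A$ to be $*$-invariant. To complete your write-up, you should follow that argument rather than the twisted family $\Phi_\lambda$.
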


 	   \section{The $(W,*)$-exponent}

In \cite{GiaPolVal} the authors proved that, for any $*$-algebra $A$ that satisfies a non-trivial identity the limit
$$
    \exp^*(A) := \lim_{n \to \infty} \sqrt[n]{c_n^*(A)}
$$
exists and is an integer, referred to as the $*$-exponent of $A$.
Moreover, in \cite{Gordienko2013} the same result was established for a broad class of algebras, which includes algebras with involution, under the additional assumption of finite dimensionality.
By analogy, we define the generalized $(W,*)$-exponent of $A$ by
$$
    \exp^{W,*}(A) := \lim_{n \to \infty} \sqrt[n]{c_n^{W,*}(A)}.
$$
The main objective of this section is to show that, for any finite-dimensional $(W,*)$-algebra $A$, this limit exists and coincides with $\exp^*(A)$.

From this point on, we therefore assume that $A$ is a finite-dimensional $(W,*)$-algebra. We begin by recalling the following definition.

 	\begin{definition}
    Let $A$ be a finite-dimensional $(W,*)$-algebra such that $A=B_1 \oplus \cdots \oplus B_m + J$, where $B_1,\ldots, B_m$ are $*$-simple algebras.
 	 We say that a $*$-subalgebra $B_{i_1}\oplus \cdots \oplus B_{i_k}$, where the $B_{i_j}$'s are distinct simple $*$-algebras, is \textit{admissible} if for some permutation $(l_1,\ldots,l_k)$ of $(i_1,\ldots, i_k)$ we have that
 	 $$B_{l_1}JB_{l_2}J\cdots JB_{l_k}\neq 0.$$
 	 \end{definition}

We start by proving the lower bound.

\begin{lemma}\label{lem: lower bound}
    Let $A$ be a finite dimensional $(W,*)$-algebra over an algebraically closed field $F$ of characteristic zero. Then there exist constants $C>0$ and $t$ such that
    $$
    c_n^{W,*}(A)\geq Cn^t d^n,
    $$
    where $d$ is the maximal dimension of an admissible $*$-subalgebra of $A$.
\end{lemma}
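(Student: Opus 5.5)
The plan is to reduce the lemma to the known lower bound for ordinary $*$-codimensions. By \eqref{eq: lower}, $c_n^{W,*}(A)\geq c_n^*(A)$ for every $n\geq 1$, because $W$ is unital and so $P_n^*\subseteq P_n^{W,*}$. Moreover, $P_n^*\cap \I^{W,*}(A)=P_n^*\cap\I^*(A)$: a $*$-polynomial with scalar coefficients is a $(W,*)$-identity exactly when it vanishes on all symmetric and skew substitutions, and this condition does not involve the $W$-action. It therefore suffices to prove $c_n^*(A)\geq Cn^td^n$, where $d$ is the maximal dimension of an admissible $*$-subalgebra in the Wedderburn--Malcev decomposition $A=B_1\oplus\cdots\oplus B_m+J$ of Theorem \ref{th: WM invol}.

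One point needs checking first. The decomposition in Theorem \ref{th: WM for W*A} is the ordinary $*$-decomposition, with the same $B_i$ and $J$, so the notion of admissibility does not depend on $W$. Hence $d$ is exactly the quantity that appears in the ordinary theory of the $*$-exponent.

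For the ordinary lower bound, I would follow the Giambruno--Zaicev argument as adapted to involutions in \cite{GiaPolVal} (see also \cite{GiaZai}). Fix an admissible $B_{l_1}\oplus\cdots\oplus B_{l_k}$ with $B_{l_1}JB_{l_2}J\cdots JB_{l_k}\neq0$ and total dimension $d$.

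1. Each $B_{l_j}$ is one of the $*$-simple algebras listed in Theorem \ref{th: *-simple characterization}. For each of them, take multilinear $*$-polynomials in symmetric and skew variables that are alternating on sets of size $\dim B_{l_j}^+$ and $\dim B_{l_j}^-$ respectively and do not vanish on $B_{l_j}$. These can be built from $*$-central polynomials in the spirit of Regev/Razmyslov.
2. Glue these pieces along the nonzero product through $J$. This produces polynomials with many alternating blocks, each of total size $d$, that are not identities of $A$.
3. Pass to the cocharacter. The $S_r\times S_{n-r}$-cocharacter of $P_{r,n-r}(A)$ then contains $\chi_\lambda\otimes\chi_\mu$ with $\lambda,\mu$ rectangular-like shapes of height $\dim B^+$ and $\dim B^-$, up to bounded corrections.
4. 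Take $r\approx n\dim B^+/d$. The hook formula gives $\deg\chi_\lambda\deg\chi_\mu\binom{n}{r}\geq Cn^td^n$.

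By \eqref{eq: decomp codim} this bounds $c_n^*(A)$ from below, and by the first paragraph it bounds $c_n^{W,*}(A)$ as well.

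The main obstacle is step 2, the gluing: producing nonvanishing products that respect symmetric/skew variables across the radical links. Rather than redo it, I would invoke the known result directly: for finite-dimensional $*$-algebras over an algebraically closed field of characteristic zero, $c_n^*(A)\geq Cn^td^n$ with $d$ the maximal admissible dimension, as in \cite{GiaPolVal} and \cite{Gordienko2013}. Combining this with the reduction in the first paragraph gives the lemma.
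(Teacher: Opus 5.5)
Your proposal is correct and follows essentially the same route as the paper: the paper also combines $c_n^{W,*}(A)\geq c_n^*(A)$ from \eqref{eq: lower} with the known lower bound $c_n^*(A)\geq Cn^td^n$ for finite-dimensional $*$-algebras, citing \cite[Theorem 5]{Gordienko2013}. Your sketch of the Giambruno--Zaicev gluing construction is therefore not needed, since you end up invoking that known result directly anyway.
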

\begin{proof}
    By \cite[Theorem 5]{Gordienko2013} and \eqref{eq: lower} there exist constants $C>0$ and $t$ such that $c_n^{W,*}(A)\geq c_n^* (A) \geq Cn^t d^n$,
    where $d$ is the maximal dimension of an admissible $*$-subalgebra of $A$, as required.
\end{proof}

In order to compute the upper bound we need the following construction.
 Let $A = A^+ \oplus A^-$ be a finite dimensional $(W,*)$-algebra.
 	We fix a basis $\mathcal{B}_A$ of $A$ which is the union $\mathcal{B}_A = \mathcal{B}^+ \cup \mathcal{B}^-$ of bases of $A^+$ and $A^-$, respectively, where $\mathcal{B}^+ = \{ a_1, \ldots, a_s \}$ and $\mathcal{B}^- = \{ b_1, \ldots, b_t \}$. Moreover, for a fixed $n \geq 1$ consider $n(s+t)$ commutative variables $\xi_{i,j}$, $1 \leq i \leq n$, $1\leq j\leq s+t$. Then, for $1\leq i \leq  n$, we define the generic elements:
 	\begin{equation}\label{eq: generic elem}
 			\xi_i^+ := \sum_{j=1}^s
 			\xi_{i,j}\otimes a_{j}
 			\quad \text{and} \quad
 			\xi_i^- := \sum_{j=1}^t
 			\xi_{i,s+j}\otimes b_{j}.
 	\end{equation}
    We denote by $H$ the algebra generated by the $\xi_i^+$ and $\xi_i^-$, $1\leq i \leq n$, over $F$. Then, the $(W,*)$-algebra structure of $A$ induces naturally a corresponding $(W,*)$-algebra structure on $H$. The involution $*$ on $H$ is defined by assuming that the elements $\xi_1^+, \ldots, \xi_n^+$ are symmetric and $\xi_1^-, \ldots, \xi_n^-$ are skew-symmetric.
    The action of $W$ on $H$ is given as follows: for $w,v \in W$ and $1\leq i \leq n$,
 	\begin{equation*}\label{eq: gen variabili}
 			w\xi_i^+v= \sum_{j=1}^s
 			\xi_{i,j}\otimes (wa_{j}v)
 			\quad \text{and} \quad
 			w\xi_i^-v= \sum_{j=1}^t
 			\xi_{i,s+j}\otimes (wb_{j}v).
 	\end{equation*}
Moreover, let $F[\xi_{i,j}]:=F[\xi_{i,j} | 1\leq i \leq n, 1\leq j \leq s+t]$ denote the polynomial ring over $F$ in the indeterminates $\xi_{i,j}$, that is, the free commutative algebra over $F$ in $n(s+t)$ variables. Then also the tensor product algebra $F[\xi_{i,j}] \otimes A$ inherits a natural $(W,*)$-algebra structure induced by the one of $A$, and we have that $H \subseteq F[\xi_{i,j}] \otimes A$.

\begin{proposition}\label{pro: relatively free algebra}
Let $W\langle x_1^+, \ldots, x_n^+,x_1^-,\ldots , x_n^-\rangle $ be the free $(W,*)$-algebra in $n$ symmetric variables and $n$ skew-symmetric variables, and  $$\psi\colon W\langle x_1^+, \ldots, x_n^+,x_1^-,\ldots , x_n^-\rangle \to H$$
be the $(W,*)$-homomorphism induced by $\psi(x_i^+)=\xi_i^+$ and $\psi(x_i^-)=\xi_i^-$, $1\leq i \leq n$. Then 
$$
\ker\psi=\I^{W,*}(A)\cap W\langle x_1^+, \ldots, x_n^+,x_1^-,\ldots , x_n^-\rangle.
$$

\end{proposition}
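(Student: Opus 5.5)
We prove the two inclusions separately, using the standard generic-element argument adapted to the $(W,*)$-setting. Write $f=f(x_1^+,\ldots,x_n^+,x_1^-,\ldots,x_n^-)$ for an element of the free $(W,*)$-algebra on these $2n$ variables. The basic tool is the family of \emph{specialization maps}: for every point $\gamma=(\gamma_{i,j})\in F^{n(s+t)}$ let
$$
\varepsilon_\gamma\colon F[\xi_{i,j}]\otimes A\to A, \qquad p\otimes a\mapsto p(\gamma)a .
$$
Since $\varepsilon_\gamma$ is the tensor product of the evaluation $F[\xi_{i,j}]\to F$ with $\ide_A$, it is an algebra homomorphism. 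It commutes with the $W$-action, because $W$ acts only on the second tensor factor: $w(p\otimes a)v=p\otimes wav$. It also commutes with the involution, which is $\ide\otimes *$ on $F[\xi_{i,j}]\otimes A$. Hence $\varepsilon_\gamma$ is a $(W,*)$-homomorphism.

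\textbf{Preliminary check.} We first verify that the $(W,*)$-structure on $H$ is the one induced from $F[\xi_{i,j}]\otimes A$. By construction, $\xi_i^+$ is symmetric and $\xi_i^-$ is skew-symmetric there. Moreover, $H$, or more precisely the $(W,*)$-subalgebra it generates, is closed under the involution and the $W$-action. This ensures that $\psi$ is a well-defined $(W,*)$-homomorphism.

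\textbf{Inclusion $\supseteq$.} Let $f\in \I^{W,*}(A)$ and write
$$
\psi(f)=f(\xi_1^+,\ldots,\xi_n^-)=\sum_k p_k\otimes c_k,
$$
where $\{c_k\}$ is a basis of $A$ and $p_k\in F[\xi_{i,j}]$. For any $\gamma$, the composite $\varepsilon_\gamma\psi$ is the $(W,*)$-evaluation of $f$ at the elements
$$
a_i^+=\sum_{j=1}^s\gamma_{i,j}a_j\in A^+, \qquad a_i^-=\sum_{j=1}^t\gamma_{i,s+j}b_j\in A^- .
$$
Since $f$ is an identity, this evaluation vanishes, so $p_k(\gamma)=0$ for every $k$ and every $\gamma$. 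Because $F$ has characteristic zero, it is infinite, and a polynomial vanishing on all of $F^{n(s+t)}$ is zero. Thus every $p_k=0$, i.e. $\psi(f)=0$.

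\textbf{Inclusion $\subseteq$.} Suppose $\psi(f)=0$, and take arbitrary elements $a_i^+\in A^+$ and $a_i^-\in A^-$. Expand them in the bases $\mathcal{B}^+$ and $\mathcal{B}^-$ to obtain scalars $\gamma_{i,j}$. The evaluation $f(a_1^+,\ldots,a_n^-)$ then equals $\varepsilon_\gamma(\psi(f))=0$. Since the $a_i^\pm$ were arbitrary, $f$ is a $(W,*)$-identity of $A$.

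\textbf{Main obstacle.} The only delicate point is justifying that $\varepsilon_\gamma\circ\psi$ coincides with the evaluation map $\bar\varphi_\Phi$ sending $x_i^\pm\mapsto a_i^\pm$. Both maps are $(W,*)$-homomorphisms out of the free $(W,*)$-algebra, and they agree on the generators $x_i^\pm$. By the universal property of the free $(W,*)$-algebra they are therefore equal. This step relies on the commutation of $\varepsilon_\gamma$ with the $W$-action and the involution, established at the start.
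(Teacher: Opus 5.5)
Your proof is correct. For the inclusion $\ker\psi\subseteq \I^{W,*}(A)\cap W\langle x_1^+,\ldots,x_n^+,x_1^-,\ldots,x_n^-\rangle$ it is the paper's argument. Your specialization $\varepsilon_\gamma$ is exactly the paper's $(W,*)$-homomorphism $\rho\colon F[\xi_{i,j}]\otimes A\to A$. Your appeal to the universal property makes explicit the step $\rho(f(\xi_1^+,\ldots,\xi_n^-))=f(\rho(\xi_1^+),\ldots,\rho(\xi_n^-))$, which the paper uses silently.

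For the other inclusion you take a different route:
\begin{itemize}
\item The paper first uses characteristic zero to assume $f$ is multilinear. It then expands $f(\xi_1^+,\ldots,\xi_n^-)$ by multilinearity into terms of the form (a monomial in the $\xi_{i,j}$) tensored with a value of $f$ on basis elements of $A$, and each such value vanishes.
\item You apply every specialization $\varepsilon_\gamma$ to conclude that each coefficient polynomial $p_k$ vanishes on all of $F^{n(s+t)}$, and then use only that $F$ is infinite.
\end{itemize}
Your version handles an arbitrary, not necessarily multilinear, $f$ directly and uses one tool for both inclusions. It also avoids the steps hidden in the paper's phrase ``we can assume $f$ multilinear'': passing to multihomogeneous components, recovering $f$ from its full linearization, and checking that a multilinear identity still vanishes on generic elements with repeated arguments. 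The paper's version is enough for how the proposition is used later, since the upper-bound lemma only needs $\psi(P_n^{W,*})\cong P_n^{W,*}(A)$, i.e.\ the multilinear case.

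Your remark that $H$ should be the $(W,*)$-subalgebra generated by the $\xi_i^{\pm}$, rather than merely the $F$-subalgebra, is a fair clarification of the paper's definition.
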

\begin{proof}
    We start by proving that $\I^{W,*}(A)\cap W\langle x_1^+, \ldots, x_n^+,x_1^-,\ldots , x_n^-\rangle \subseteq \ker\psi.$ To this end, let $f=f(x_1^+, \ldots, x_n^+,x_1^-,\ldots , x_n^-) \in
     W\langle x_1^+, \ldots, x_n^+,x_1^-,\ldots , x_n^-\rangle$ be a $(W,*)$-identity of $A$.
    Since the field $F$ is of characteristic zero, we can assume that $f$ is a multilinear $(W,*)$-polynomial. Then, by \eqref{eq: generic elem}, $ \psi(f)=f(\xi_1^+, \ldots, \xi_n^+,\xi_1^-,\ldots , \xi_n^-)$ is a sum of the expressions of the form
    $$\bar{f}=f(\xi_{1,j_1} \otimes a_{j_1}, \ldots , \xi_{n,j_n} \otimes a_{j_n}, \xi_{1,l_1} \otimes b_{l_1}, \ldots , \xi_{n,l_n} \otimes b_{l_n})= \xi_{1,p_1} \cdots \xi_{n,p_n} \otimes f(  a_{j_1}, \ldots ,  a_{j_n},  b_{l_1}, \ldots ,  b_{l_n}),$$
    where $p_i\in \{j_i,l_i\}$ for $1\leq i \le n$. Since $f\in \I^{W,*}(A)$, we get that $\bar{f}=0$, and as a consequence $f\in \ker \psi$, as required.

    Suppose now that $f(x_1^+, \ldots, x_n^+,x_1^-,\ldots , x_n^-)\in \ker \psi$, i.e., $f(\xi_{1}^+, \ldots,\xi_{n}^+, \xi_{1}^-,\ldots, \xi_{n}^-)=0 $ in $H$, and let $c_1,\ldots, c_n$ be arbitrary symmetric elements of  $A$ and $d_1,\ldots, d_n$ be arbitrary skew-symmetric elements of  $A$. Write each $c_i$ as a linear combination of the basis $\mathcal{B}^+$ of $A^+$ and $d_i$ as a linear combination of the basis $\mathcal{B}^-$ of $A^-$. Then, for $1\leq i \leq n$, there exist $\alpha_{i,1}, \ldots, \alpha_{i,s}, \beta_{i,s+1},\ldots,\beta_{i,s+t}\in F$ such that
    $$
    c_i=\sum_{j=1}^s \alpha_{i,j}a_{j} \quad \text{and} \quad d_i=\sum_{j=1}^t \beta_{i,s+j}b_{j}.
    $$
    By the universal property of the free commutative algebra $F[\xi_{i,j}]$ and of the tensor product, we can consider the $(W,*)$-homomorphism $\rho: F[\xi_{i,j}] \otimes A \to A$ which restricts to the identity on $A$ and, for each $1\leq i \leq n$, satisfies
$$\rho(\xi_{i,j})=\begin{cases}
    \alpha_{i,j}, & \text{if }1\leq j \leq s\\
    \beta_{i,j}, & \text{if }s+1\leq j \leq s+t
\end{cases}.$$
Since $\rho(\xi_i^+)=c_i$ and $\rho(\xi_i^-)=d_i$, $1\leq i \leq n$, we obtain
$$0=\rho(f(\xi_{1}^+, \ldots,\xi_{n}^+, \xi_{1}^-,\ldots, \xi_{n}^-))=f(\rho(\xi_{1}^+)\ldots,\rho(\xi_{n}^+), \rho(\xi_{1}^-),\ldots, \rho(\xi_{n}^-))=f(c_1,\ldots,c_n,d_1,\ldots, d_n).$$
Since $c_1,\ldots,c_n,d_1,\ldots,d_n$ are arbitrary elements of $A$, $f(x_1^+, \ldots, x_n^+,x_1^-,\ldots , x_n^-)$ is a $(W,*)$-identity of $A$. Consequently, $\ker\psi=\I^{W,*}(A)\cap W\langle x_1^+, \ldots, x_n^+,x_1^-,\ldots , x_n^-\rangle$.
\end{proof}
 	
 	As a consequence we have that $H$ is the relatively free $(W,*)$-algebra in $n$ symmetric generators and $n$ skew-symmetric generators of the $(W,*)$-variety generated by $A$, and that a $(W,*)$-polynomial is a $(W,*)$-identity if and only if it vanishes on the generic elements $\xi_{i,j}$, $1\leq i \leq n$, $1\leq j \leq s+t$. We use this in the next lemma to determine the upper bound.

 	\begin{lemma}\label{lem: upper bound}
 		Let $A$ be a finite dimensional $(W,*)$-algebra over an algebraically
 		closed field $F$, where $W$ is a unital $F$-algebra. Then there exist constants $C$, $u$ such that
 		$$
 		c_n^{W,*}(A)\leq Cn^ud^n,
 		$$
 		where $d$ is the maximal dimension of a admissible $*$-subalgebra of $A$.
 	\end{lemma}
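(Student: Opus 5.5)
The plan is to adapt the standard Giambruno--Zaicev upper bound argument to the relatively free algebra $H$ of Proposition~\ref{pro: relatively free algebra}. We reduce to bounding $c_{r,n-r}(A)$ by \eqref{eq: decomp codim}. Since $\binom{n}{r}$ summed over $r$ gives only $2^n$, we cannot simply bound each $c_{r,n-r}$ by $Cn^ud^n$. Instead we work directly with $P_n^{W,*}$ evaluated on generic elements: $c_n^{W,*}(A)$ equals the dimension of the span of the multilinear evaluations $f(\xi_1^{\epsilon_1},\dots,\xi_n^{\epsilon_n})$ in $F[\xi_{i,j}]\otimes A$, with $f\in P_n^{W,*}$. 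Here we may use generic elements $\xi_i=\xi_i^++\xi_i^-$ spanning all of $A$, after reducing to full variables $x_i$.

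First we fix a Wedderburn--Malcev decomposition $A=B_1\oplus\cdots\oplus B_m+J$ as in Theorem~\ref{th: WM for W*A}, with $J$ a $(W,*)$-ideal, $J^{q}=0$, and $WB_i,B_iW\subseteq B_i+J$. We choose a basis of $A$ that is a union of bases of the $B_i$ (split into $\pm$ parts, possible since each $B_i$ and $J$ are $*$-invariant) and of $J$ (also split into $\pm$ parts). Each generic element $\xi_i$ is then a sum of generic pieces $\xi_i^{(k)}$ in $B_k$ and $\eta_i$ in $J$. Expanding multilinearly, any evaluation of a monomial $w_0x_{\sigma(1)}w_1\cdots x_{\sigma(n)}w_n$ is a sum over choices assigning each variable either to some $B_k$ or to $J$.

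The key observation is that elements of $W$ act only through $\overline W\subseteq\M(A)$, and they preserve the filtration by powers of $J$ and map $B_k$ into $B_k+J$. Consequently a nonzero product in which the $B$-pieces come from $B_{k}$'s and at most $q-1$ radical factors appear (including radical contributions produced by $W$) forces the set of occurring $B_k$'s, read left to right with $J$-gaps, to be contained in some admissible subalgebra. Here we use that $B_kB_l=0$ for $k\neq l$, so a switch between blocks requires an interposed $J$. Thus only assignments where at most $q-1$ variables go to $J$ and the rest go to the $B_k$ of an admissible subalgebra $B_{i_1}\oplus\cdots\oplus B_{i_k}$ of dimension at most $d$ contribute.

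We then count. Fix the at most $q-1$ positions and the admissible subalgebra; there are $O(n^{q-1})$ choices and finitely many subalgebras. The span of the contributions is contained in the span of the tensors $\prod \xi_{i,j}\otimes a$ that are multilinear in the variable sets. This is a polynomial space in which each variable sent to the semisimple part carries one of at most $d$ basis coordinates, so its dimension is at most $\dim A\cdot(\dim J)^{q-1}d^{\,n}$. Summing gives $c_n^{W,*}(A)\le Cn^{u}d^n$.

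The main obstacle is the step that passes from $W$ to $\overline W$. Because $\overline W$ need not preserve the $B_k$, a $W$-coefficient can move a $B_k$-factor into $J$, or connect different blocks. We must check that such terms still fall in $J$-gaps, counted among the at most $q-1$ radical factors, so that the admissibility argument survives. The cleanest way is to use that $w(B_k)\subseteq B_k+J$ and $(B_k+J)(B_l+J)\subseteq J$ for $k\ne l$, and to track products modulo $J^{\ell}$ by induction on $\ell$.
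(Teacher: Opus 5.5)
Your proposal is correct and uses the same ingredients as the paper's proof. These are the generic elements in $F[\xi_{i,j}]\otimes A$ (Proposition~\ref{pro: relatively free algebra}), a basis adapted to the $W$-compatible Wedderburn--Malcev decomposition of Theorem~\ref{th: WM for W*A}, the bound on the number of radical factors, and the fact that the surviving semisimple factors lie in an admissible subalgebra.

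The difference is only in the bookkeeping. The paper bounds each $c_{r,n-r}(A)$ by $Cn^u(\dim E^+)^r(\dim E^-)^{n-r}$ for one admissible $E$ of maximal dimension, then recombines via \eqref{eq: decomp codim}. You count over all sign patterns at once, so each semisimple position carries $\dim D$ coordinates and $d^n$ appears directly.

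Your version is the more robust one. The paper's per-$r$ bound with a single $E$ is not literally valid, because different admissible $D$ can split $\dim D^+ + \dim D^-$ differently; it only becomes correct after summing over all admissible $D$. You also make explicit the point the paper passes over quickly: a $W$-coefficient can push a factor of $B_k$ into $J$, or supply the $J$-gap between two different blocks. This is handled by $wB_k,\,B_kw\subseteq B_k+J$ and $(B_k+J)(B_l+J)\subseteq J$ for $k\neq l$.

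Two small repairs are needed. First, a position counted as radical may be a variable from a component outside $D$ that the $W$-action pushed into $J$. So those positions range over all basis elements of $A$, and the constant should be $(\dim A)^{q-1}$ rather than $(\dim J)^{q-1}$. Second, you should state how a nonzero product $B_{l_1}J\cdots JB_{l_s}$ with repeated indices yields an admissible subalgebra containing every occurring component. Take a minimal consecutive window containing all occurring components; its two end components then occur only once in it. Keep one occurrence of each component and absorb everything else into the $J$-gaps, which is valid because $J$ is an ideal.
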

    \begin{proof}
        Consider the $(W,*)$-homomorphism $\psi\colon W\langle x_1^+, \ldots, x_n^+,x_1^-,\ldots , x_n^-\rangle \to H$ of the proposition above. Then $\psi(P_n^{W,*})\cong P_n^{W,*}(A)$, and therefore, to estimate $c_n^{W,*}(A)$, it suffices to obtain an upper bound for $\dim_F \psi(P_n^{W,*}).$ To this end, for each $0\leq r \leq n$, set $\mathcal{P}_{r,n-r}:=\psi (P_{r,n-r})$, that is, the vector space of multilinear $(W,*)$-polynomials in the variables $\xi_1^+, \ldots, \xi_r^+, \xi_{r+1}^-, \ldots, \xi_n^-$. We begin by determining an upper bound for $\dim_F \mathcal{P}_{r,n-r}$.

       Consider a monomial $w_{i_0}\xi_{\sigma(1)}^{\epsilon_{\sigma(1)}} w_{i_1} \cdots w_{i_{n-1}} \xi_{\sigma(n)}^{\epsilon_{\sigma(n)}}w_{i_n}\in \mathcal{P}_{r,n-r} $, where $\sigma\in S_n$, $w_{i_l}\in \mathcal{B}_W$ for $1\leq l \leq n$, $\epsilon_i=+$ for $1\leq i \leq r$, and $\epsilon_i=-$ for $r+1\leq i \leq n$.
       Using \eqref{eq: generic elem}, such a monomial can be expressed as a linear combination of products of $n$ elements from the fixed basis $\mathcal{B}_A=\mathcal{B}^+\cup\mathcal{B}^-$ of $A$ with coefficients that are polynomials in the variables $\xi_{ij}$. More precisely, when expanded with respect to the basis $\mathcal{B}_A$, it is a linear combination of elements of the form $c\otimes \xi_{1,j_1} \cdots \xi_{n,j_{n}}$, where $c\in \mathcal{B}_A.$ Thus, we will estimate $\dim_F \mathcal{P}_{r,n-r}$ through an estimate of the number of possible monomials $\xi_{1,j_1} \cdots \xi_{n,j_{n}}$ of this last type which can appear as coefficients of $w_{i_0}\xi_{\sigma(1)}^{\epsilon_{\sigma(1)}} w_{i_1} \cdots w_{i_{n-1}} \xi_{\sigma(n)}^{\epsilon_{\sigma(n)}}w_{i_n}$.

       Since $F$ is algebraically closed of characteristic zero, by Theorem \ref{th: WM for W*A} we write $A=B_1\oplus \cdots \oplus B_m+J$, where $J$ is the Jacobson radical of $A$ and each $B_i$ is a simple $*$-algebra satisfying $WB_l,B_lW \subseteq B_l+J$ for $1\leq l \leq m$. We may further assume that the chosen basis $\mathcal{B}_A=\mathcal{B}^+\cup\mathcal{B}^-$ of $A$ is adapted to this decomposition, in the sense that both $\mathcal{B}^+$ and $\mathcal{B}^-$ consist of elements drawn from $J$ and of each of the $B_i$.
       Since each commutative variable $\xi_{i,j}$ in \eqref{eq: generic elem} is attached to an element of either $\mathcal{B}^+$ or $\mathcal{B}^-$, by a slight abuse of terminology, we therefore refer to $\xi_{i,j}$ as a symmetric radical, skew-symmetric radical, symmetric semisimple, or skew-symmetric semisimple variable.

       Take a monomial $ \xi_{1,j_1} \cdots \xi_{n,j_{n}}$ of degree $n$ that contains $i_1$ symmetric radical variables and $i_2$ skew-symmetric radical variables. Since $J$ is a nilpotent $(W,*)$-ideal of $A$, such a monomial is nonzero only if $i_1+i_2\leq u$, where $u$ is such that $J^u\neq 0$ and $J^{u+1}= 0$. 
       Now each such monomial involves $r-i_1$ symmetric semisimple variables and $n-r-i_2$ skew-symmetric semisimple variables which come from some distinct $*$-simple components $B_{l_1}, \ldots, B_{l_p}.$ Since $W B_l, B_l W\subseteq B_l+J$, there exist distinct indices $q_1, \ldots, q_v \in \{l_1,\ldots, l_p\}$, with $1\leq v\leq p$ such that $D:=B_{q_1}\oplus \cdots \oplus B_{q_v}$ is an admissible $*$-subalgebra of $A$.

       Now, fix an admissible $*$-subalgebra $D$ of $A$. 
 	 	We recall that each monomial has $r$ coefficients taken in the set $\{\xi_{ij} \, : \, 1 \leq i \leq r, 1\leq j \leq t\}$ and $n-r$ taken in the set $\{\xi_{ij} \, : \, r+1 \leq i \leq n, 1 \leq j \leq s\}$ and also the number of possible distributions of $i_1\leq r$ symmetric radical variables and $i_2\leq n-r$ skew-symmetric radical variables in a non-zero monomial is at most $\binom{r}{i_1}\binom{n-r}{i_2}(\dim_F J)^{i_1+i_2}\leq C_1 n^u$, where $C_1$ is a constant. So, the number of possible monomials with symmetric and skew-symmetric semisimple variables coming from $D$ is bounded by $C_2n^u(\dim D^+)^r(\dim D^-)^{n-r}.$

        Now, let $M$ be the number of maximal admissible $*$-subalgebras of $A$ and let $E$ be an admissible $*$-subalgebra of maximum dimension. Then, an upper bound for the number of possible nonzero monomials is
 	 	$MC_2n^u(\dim E^+)^r(\dim E^-)^{n-r}.$
 	 	Since we rewrote any product of $n$ basis elements of $A$ as a linear combination of basis elements, we get that
$$
 	 	\dim_F \mathcal{P}_{r,n-r}\leq C_3n^u(\dim_F E^+)^r(\dim_F E^-)^{n-r},
 	 	$$
 	 	with $C_3$ constant. Since, by Proposition \ref{pro: relatively free algebra}, $\mathcal{P}_{r,n-r}\cong P_{r,n-r}(A)$, we have that $\dim_F \mathcal{P}_{r,n-r}=c_{r,n-r}(A).$ Thus, if $d=\dim_F E^+ + \dim_F E^-=\dim_F E,$ by \eqref{eq: decomp codim} we obtain
    $$
    c_n^{W,*}(A)= \sum_{r=0}^n\binom{n}{r}\dim_F \mathcal{P}_{r,n-r} \leq  C_3n^u \sum_{r=0}^n\binom{n}{r}(\dim_F E^+)^r(\dim_F E^-)^{n-r}=C_3n^ud^n,
 	$$ 	
    as required.
    \end{proof}

    Finally, we can prove the main result of the section.

    \begin{theorem}
        \label{th: w*exp}
 	Let $A$ be a finite dimensional $(W,*)$-algebra over a field $F$ of characteristic zero, where $W$ is a unital algebra with involution. Then the $(W,*)$-exponent $\exp^{W,*}(A)$ of $A$ exists and is equal to $*$-exponent $\exp^*(A)$ of $A$.

    \end{theorem}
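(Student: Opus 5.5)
The plan is to squeeze $c_n^{W,*}(A)$ between two bounds, both of the form (polynomial in $n$)$\cdot d^n$, with the same $d$. The only subtlety is that Lemmas \ref{lem: lower bound} and \ref{lem: upper bound} are stated over an algebraically closed field, whereas Theorem \ref{th: w*exp} allows an arbitrary field of characteristic zero. So the first step is to reduce to the algebraically closed case. Let $\bar F$ be the algebraic closure of $F$, and set $\bar A=A\otimes_F\bar F$ and $\bar W=W\otimes_F \bar F$. Then $\bar A$ is a finite-dimensional $(\bar W,*)$-algebra, with involution and action extended $\bar F$-linearly. I will check two things:
\[
c_n^{W,*}(A)=c_n^{\bar W,*}(\bar A) \quad\text{and}\quad c_n^{*}(A)=c_n^{*}(\bar A)
\]
(the second dimension computed over $\bar F$).

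The equality of codimensions is the standard extension-of-scalars argument. The space $P_n^{W,*}(A)$ is the image of $P_n^{W,*}$ under the generic-element map $\psi$ of Proposition \ref{pro: relatively free algebra}, restricted to multilinear elements. This image is spanned by finitely many multilinear maps $(A^+)^{r}\times (A^-)^{n-r}\to A$. Multilinearity means that a $(W,*)$-identity of $A$ is still an identity of $\bar A$. Moreover, $\bar A^{\pm}=A^{\pm}\otimes\bar F$, and $P_n^{\bar W,*}=P_n^{W,*}\otimes_F\bar F$. Putting these together, $P_n^{\bar W,*}(\bar A)\cong P_n^{W,*}(A)\otimes_F\bar F$. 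One point needs care here: because $W$ may be infinite-dimensional, $c_n^{W,*}(A)$ should be read through $\overline W=\Phi(W)$, which is finite-dimensional. The action factors through $\overline W$, so no finiteness issue arises.

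With the reduction in hand, assume $F$ is algebraically closed. Let $d$ be the maximal dimension of an admissible $*$-subalgebra of $A$. Lemma \ref{lem: lower bound} and Lemma \ref{lem: upper bound} give
\[
Cn^t d^n\le c_n^{W,*}(A)\le C'n^u d^n .
\]
Taking $n$-th roots and letting $n\to\infty$, both $n^{t/n}$ and $n^{u/n}$ tend to $1$. Hence $\lim_n \sqrt[n]{c_n^{W,*}(A)}$ exists and equals $d$.

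It remains to identify $d$ with $\exp^*(A)$. Take $W=F$ acting trivially, that is, by scalars; this is a legitimate unital $(W,*)$-structure. With this action, $c_n^{W,*}(A)=c_n^*(A)$, and the same two lemmas give $\exp^*(A)=d$. Alternatively, one can combine \cite[Theorem 5]{Gordienko2013} with \eqref{eq: lower}. Since the number $d$ depends only on the $*$-algebra structure of $A$, we conclude $\exp^{W,*}(A)=\exp^*(A)$. A useful by-product is that $\exp^*(A)$ is a non-negative integer.

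The main obstacle is the field-extension step. Everything else is formal once the two lemmas are available. If a cleaner route is preferred, I would instead verify directly that the Wedderburn--Malcev data and admissible subalgebras of $\bar A$ determine both exponents. It suffices that the identities, and hence the codimensions, are unchanged under extension of scalars, because both sides of the claimed equality are then computed for $\bar A$.
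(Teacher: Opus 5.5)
Your proposal is correct and follows essentially the same route as the paper: invariance of the codimensions under extension of the base field (which the paper only asserts as an adaptation of \cite[Theorem 4.1.9]{GiaZai}, and which you sketch via the multilinear-map description), then the squeeze between Lemmas \ref{lem: lower bound} and \ref{lem: upper bound}, and finally $\exp^*(A)=d$ from \cite[Theorem 5]{Gordienko2013}. Your alternative of specializing to $W=F$ does not actually avoid Gordienko's theorem, since Lemma \ref{lem: lower bound} is itself proved by citing it.
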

    \begin{proof}
         As in the ordinary case, a careful adaptation of the proof of \cite[Theorem 4.1.9]{GiaZai} shows that the codimensions remain unchanged under extension of the base field $F$, since $F$ is infinite.
             Then by Lemmas~\ref{lem: lower bound} and~\ref{lem: upper bound} there exist constants $C_1,C_2,t_1,t_2$ such that $C_1 \geq 0$ and
$$
C_1n^{t_1}d^n \leq c_n^{W,*}(A) \leq C_2n^{t_2}d^n,
$$
where $d$ is the maximal dimension of an admissible $*$-subalgebra of $A$. Thus, the $(W,*)$-exponent $\exp^{W,*}(A)$ of $A$ exists and is equal to $d$. Moreover, by \cite[Theorem~5]{Gordienko2013}, we also have $\exp^*(A)=d$, completing the proof.
    \end{proof}

As a consequence of the previous theorem, we obtain that $\exp^{W,*}(A)=d$, where $d$ denotes the maximal dimension of an admissible $*$-subalgebra of $A$. In particular, this yields the integrality of the $(W,*)$-exponent. We therefore obtain the following corollary.
\begin{corollary}
    If $A$ is a finite dimensional $(W,*)$-algebra, then the sequence of $(W,*)$-codimensions $c^{W,*}_n(A)$, $n\geq 1,$ either has exponential growth or is polynomially bounded.
\end{corollary}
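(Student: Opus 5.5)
The plan is to read the dichotomy directly off Theorem~\ref{th: w*exp} together with the two-sided estimate obtained in its proof. First, as in that proof, I reduce to the case of an algebraically closed base field. The generalized $*$-codimensions do not change under extension of scalars: the adaptation of \cite[Theorem 4.1.9]{GiaZai} already invoked in the proof of Theorem~\ref{th: w*exp} applies, and the new $(W,*)$-algebra $A\otimes_F \bar F$ is again finite dimensional. So I may assume $F$ is algebraically closed. Then Lemmas~\ref{lem: lower bound} and~\ref{lem: upper bound} give constants $C_1>0$, $C_2$, $t_1$, $t_2$ with
$$
C_1 n^{t_1} d^n \leq c_n^{W,*}(A) \leq C_2 n^{t_2} d^n ,
$$
where $d$ is the maximal dimension of an admissible $*$-subalgebra of $A$. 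In particular $d$ is a non-negative integer.

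I then split into cases according to $d$. If $d=0$ or $d=1$, the upper bound reads $c_n^{W,*}(A)\leq C_2 n^{t_2}$, which is polynomially bounded. (When $d=0$, i.e.\ $A=J$ is nilpotent, the monomial count in the proof of Lemma~\ref{lem: upper bound} even gives $c_n^{W,*}(A)=0$ for large $n$.) If $d\geq 2$, the lower bound gives $c_n^{W,*}(A)\geq C_1 n^{t_1} 2^n$. Since $C_1>0$ and $t_1$ is a fixed constant, possibly negative, $n^{t_1}(d/\alpha)^n\to\infty$ for any $1<\alpha<d$. Hence $c_n^{W,*}(A)\geq \alpha^n$ for all large $n$, which is exponential growth. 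Equivalently, $\exp^{W,*}(A)=d\geq 2$.

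The only point needing care is the degenerate case where no admissible subalgebra exists, i.e.\ the semisimple part is zero. Here Lemma~\ref{lem: lower bound} with $d=0$ must be read as vacuous, and the polynomial bound comes purely from Lemma~\ref{lem: upper bound}: every nonzero monomial has at most $u$ radical variables, so for $n>u$ the space $\mathcal{P}_{r,n-r}$ vanishes. I expect this bookkeeping, and not any conceptual step, to be the main thing to check. The integrality of $d$ is what rules out intermediate growth.
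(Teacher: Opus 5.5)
Your proposal is correct and follows the paper's route. The paper also reads the corollary off the two-sided estimate $C_1 n^{t_1} d^n \leq c_n^{W,*}(A) \leq C_2 n^{t_2} d^n$ (after extending scalars) from the proof of Theorem~\ref{th: w*exp}, with $d$ a non-negative integer, and splits into $d\leq 1$ versus $d\geq 2$; your explicit use of the polynomial factors, not just integrality of $\exp^{W,*}(A)$, is exactly what the polynomially bounded case needs, and your separate handling of the nilpotent case $d=0$ is a harmless extra check.
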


We say that the generalized $*$-variety $\mathcal{V}$ has \emph{polynomial growth} if $c_n^{W,*}(\mathcal{V})$ is polynomially bounded and $\mathcal{V}$ has \emph{almost polynomial growth} if $c_n^{W,*}(\mathcal{V})$ is not polynomially bounded but every proper $(W,*)$-subvariety $\mathcal{U}$ of $\mathcal{V}$ has polynomial growth. Our interest lies precisely in the study of this boundary case.

 \section{Generalized $*$-identities of $M$}

 	 In this section we introduce the $(W,*)$-algebra $M$, which is a subalgebra of the $4 \times 4$ upper triangular matrices over $F$, and we determine its generalized $*$-identities.

Let $e_{ij}$ denote the usual matrix units. Then $M$ is the $*$-algebra
 	 $$ M=F(e_{11}+e_{44}) + F(e_{22}+e_{33}) + Fe_{12} + Fe_{34},
 	 $$
 	endowed with the reflection involution $*$, i.e., the involution given by the formula: for
$\alpha, \beta, \gamma, \delta\in F$
$$
\big(\alpha(e_{11}+e_{44})+\beta(e_{22}+e_{33})+\gamma e_{12}+ \delta e_{34}\big)^*= \alpha(e_{11}+e_{44})+\beta(e_{22}+e_{33})+\delta e_{12}+ \gamma e_{34}.
$$
 	As the field $F$ has characteristic zero, $M$ decomposes as the direct sum of its symmetric and skew-symmetric parts, $M=M^+\oplus M^-$, where
 	$$
M^+=F(e_{11}+e_{44}) + F(e_{22}+e_{33}) + F(e_{12}+e_{34}) \quad  \mbox{ and }  \quad
 	M^-= F(e_{12}-e_{34}).
 	$$

 	Our first goal in this section is to classify, up to equivalence, all possible $(W,*)$-algebra structures on $M$. To this end, let $\Phi\colon W\to \M(M)$ be an acting $*$-homomorphism of $W$ on $M$ and set $\overline{W}=\Phi(W)$.
    Since $M$ is unital, Proposition \ref{pro: unital subalgebra} implies that the action of $W$ on $M$ is induced by the left and right multiplication actions of a $*$-subalgebra $B$ of $M$ isomorphic to $\overline{W}$ and containing the unit $1_M=e_{11}+e_{22}+e_{33}+e_{44}$.
    
    Notice that,  up to isomorphisms, the proper $*$-subalgebras of $M$ containing $1_M$ are
    $$
    F1_M\cong F, \quad \quad D:= F(e_{11}+e_{44})\oplus F(e_{22}+e_{33}),
    $$
    $$
    C_1:= F1_M + Fe_{12}+ Fe_{34}, \quad C_2:= F1_M + F(e_{12}+ e_{34}), \quad  C_3:= F1_M + F(e_{12} - e_{34}),
    $$
    endowed with the corresponding induced involution.
    
    Consequently, by Definition \ref{def: equivalence} and Proposition \ref{pro: unital subalgebra}, $M$ admits exactly six pairwise non-equivalent $(W,*)$-algebra structures according to which $*$-subalgebra of $M$ induce the action by left and right multiplication. We denote these structures by 
    $$
    M^F,\qquad M^D,\qquad M^{C_i}\ (1\leq i\leq 3),\qquad M,
    $$
    according as $\overline{W}$ is isomorphic to $F$, $D$, $C_i$ for some $1\leq i\leq 3$, or to the full algebra $M$, respectively.

Notice that as a direct consequence of Theorem \ref{th: w*exp}, we have that
$$\exp^{W,*}(M^F)=\exp^{W,*}(M^D)=\exp^{W,*}(M^{C_i})=\exp^{W,*}(M)=2,$$
where $1\leq i\leq 3.$ Moreover, by Theorems \ref{th: WM for W*A} and \ref{th: ss} it follows that
$$
M^F\in \V^{W,*}(M^{C_i}), \, 1\leq i\leq 3, \qquad \text{and} \qquad M^D\in \V^{W,*}(M).
$$
Accordingly, since we aim to study the boundary case between polynomial and exponential growth, we proceed to determine generators for the $T_W^*$-ideals and to compute the exact values of the corresponding $(W,*)$-codimensions of $M^F$ and $M^D$. We begin with the following technical lemma.

    \begin{lemma}\label{Lem: consequence x_1^- x_2^-}
        For any $k\geq 0$ we have $x_{k+1}^- x_1^+ \cdots x_k^+ x_{k+2}^-\in \langle x_1^- x_2^-\rangle_{T_W^*}$.
    \end{lemma}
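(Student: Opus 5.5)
We argue by induction on $k$, using only that a $T_W^*$-ideal is closed under substitutions of variables by $(W,*)$-polynomials of the correct parity (symmetric for symmetric, skew-symmetric for skew-symmetric) and under multiplication on either side. For $k=0$ the polynomial is $x_1^-x_2^-$, up to renaming variables, so there is nothing to prove. A naive substitution of a skew-symmetric expression such as $x_1^+x_2^- + x_2^- x_1^+$ for a skew variable does not directly produce $x_2^-x_1^+x_3^-$, so we will instead combine left and right multiples of $f=x_1^-x_2^-$ with substitutions of skew-symmetric commutators.

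For the case $k=1$ we form the ideal elements $x_1^+x_2^-x_3^-$ and $x_2^-x_3^-x_1^+$, both multiples of $f$. Next we substitute the skew-symmetric element $[x_1^+,x_2^-]=x_1^+x_2^--x_2^-x_1^+$ for $x_1^-$ in $f$; it is skew because
\[
(x_1^+x_2^-)^*=x_2^{-*}x_1^{+*}=-x_2^-x_1^+ .
\]
This gives
\[
x_1^+x_2^-x_3^- - x_2^-x_1^+x_3^- \in \langle f\rangle_{T_W^*}.
\]
Since the first summand is already in the ideal, we conclude $x_2^-x_1^+x_3^-\in\langle f\rangle_{T_W^*}$, which is the claim for $k=1$.

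For the inductive step, assume that $g_{k-1}=x_k^-x_1^+\cdots x_{k-1}^+x_{k+1}^-$ lies in the ideal for all choices of indices. We substitute $x_k^-\mapsto[x_k^+,x_{k+1}^-]$, which is skew as above, and rename the last skew variable $x_{k+2}^-$. This yields
\[
x_k^+x_{k+1}^-x_1^+\cdots x_{k-1}^+x_{k+2}^- - x_{k+1}^-x_k^+x_1^+\cdots x_{k-1}^+x_{k+2}^-.
\]
The first term is $x_k^+$ times an instance of $g_{k-1}$, so it lies in the ideal. The second term therefore lies in the ideal as well, and after renaming symmetric variables (via the $S_k$-action, or equivalently by substitution) it is exactly $x_{k+1}^-x_1^+\cdots x_k^+x_{k+2}^-$.

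The main point to check is that substituting $[x^+,y^-]$ for a skew variable is legitimate. This holds because $T_W^*$-ideals are invariant under $(W,*)$-endomorphisms, which are exactly the maps sending $x_i\mapsto$ any polynomial, i.e. $x_i^-$ to any skew element; the commutator $[x^+,y^-]$ is skew, so the substitution is allowed. Everything else is routine bookkeeping with the variable names.
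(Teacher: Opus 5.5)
Your overall plan is the right one: induct on $k$, substitute for a skew variable a product-type combination of a symmetric and a skew variable, and discard the term that is a one-sided multiple of an instance of the previous case. However, the one nontrivial step rests on a false parity claim. The commutator $[x_1^+,x_2^-]$ is \emph{symmetric}, not skew. Your computation $(x_1^+x_2^-)^*=-x_2^-x_1^+$, together with $(x_2^-x_1^+)^*=-x_1^+x_2^-$, gives
\[
[x_1^+,x_2^-]^*=-x_2^-x_1^++x_1^+x_2^-=[x_1^+,x_2^-],
\qquad
(x_1^+x_2^-+x_2^-x_1^+)^*=-(x_1^+x_2^-+x_2^-x_1^+).
\]
Every $(W,*)$-endomorphism $\phi$ satisfies $\phi(x_i^-)^*=\phi((x_i^-)^*)=-\phi(x_i^-)$, so a skew variable can only be sent to a skew element. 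Hence the substitutions $x_1^-\mapsto[x_1^+,x_2^-]$ and $x_k^-\mapsto[x_k^+,x_{k+1}^-]$ are not available in a $T_W^*$-ideal, and neither your base case nor your inductive step is justified as written. This is not a harmless technicality. If skew variables could be replaced by symmetric elements, then $x_1^-\mapsto x_1^+$ would put $x_1^+x_2^-$ into $\langle x_1^-x_2^-\rangle_{T_W^*}$. Yet $x_1^-x_2^-$ vanishes on $M$ while $x_1^+x_2^-$ does not, since $(e_{11}+e_{44})(e_{12}-e_{34})=e_{12}$.

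The repair is immediate, and it is exactly the substitution you dismissed: the Jordan product $x^+y^-+y^-x^+$ is the skew one. Substituting $x_1^-\mapsto x_1^+x_2^-+x_2^-x_1^+$ and $x_2^-\mapsto x_3^-$ in $x_1^-x_2^-$ gives $x_1^+x_2^-x_3^-+x_2^-x_1^+x_3^-$. Its first summand is $x_1^+$ times an instance of $x_1^-x_2^-$, so this does directly yield $x_2^-x_1^+x_3^-$. In the inductive step, substitute $x_k^-\mapsto x_k^+x_{k+1}^-+x_{k+1}^-x_k^+$ together with $x_{k+1}^-\mapsto x_{k+2}^-$. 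This produces your two terms with a plus sign instead of a minus, and your bookkeeping then goes through verbatim. With this change your argument is essentially the paper's proof. The only difference is that the paper arranges the inductive step so the discarded term is a right multiple $(\cdots)\,x_k^+$ of an instance of the hypothesis rather than a left one, which is immaterial.
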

    \begin{proof}
    If $k=0$, there is nothing to prove, so assume $k\geq 1$. We proceed by induction on $k$.

    First observe that $x_1^+ x_2^- + x_2^- x_1^+\in W\langle X, *\rangle^-$.
    Then for the base case $k=1$,
        substituting $x_1^-$ with  $x_1^+ x_2^- + x_2^- x_1^+$ in the $(W,*)$-polynomial $x_1^-x_2^-$, and then renaming the variables, yields $x_1^+ x_2^-x_3^- + x_2^- x_1^+x_3^-\in \langle x_1^- x_2^-\rangle_{T_W^*}$. Hence $x_2^- x_1^+x_3^-\in \langle x_1^- x_2^-\rangle_{T_W^*}$.

        Now assume inductively that $x_{k}^- x_1^+ \cdots x_{k-1}^+ x_{k+1}^-\in \langle x_1^- x_2^-\rangle_{T_W^*}$. Replacing $x_k^-$ with  $x_k^+ x_{k+2}^- + x_{k+2}^- x_k^+$ in this $(W,*)$-polynomial and renaming variables, we obtain
        $x_{k+1}^- x_1^+ \cdots x_{k-1}^+ x_{k+2}^- x_k^+ + x_{k+1}^- x_1^+ \cdots x_k^+ x_{k+2}^-\in \langle x_1^- x_2^-\rangle_{T_W^*}$. Consequently, by the inductive hypothesis,  $x_{k+1}^- x_1^+ \cdots x_k^+ x_{k+2}^-\in \langle x_1^- x_2^-\rangle_{T_W^*}$.
    \end{proof}

  Let $[x_1,x_2]:=x_1x_2-x_2x_1$ denote the commutator of $x_1$ and $x_2$. More generally, for $k\geq 2$, we write $[x_1,x_2,\ldots, x_k]$ for the left-normed commutator.

  With this notation, we are ready to prove the main results of this section.

		\begin{theorem}\label{thm: id e cn M^D}
        Let $M^D$ be a $(W,*)$-algebra $M$ where $W$ acts on it as the algebra $D$ by left and right multiplication.
   Then there exists a basis $\mathcal{B}_W=\mathcal{B}_{W^+}\cup \mathcal{B}_{W^-}$  of $W$, where $\mathcal{B}_{W^+}:=\{1_W, \, w_i^+\, : \, i\geq 1\}$ and  $\mathcal{B}_{W^-}:=\{w_i^-\, : \, i\geq 1\}$ are basis of $W^+$ and $W^-$, respectively, such that  
 the $T_W^*$-ideal $\I^{W,*}(M^D)$  of generalized $*$-identities of $M^D$ is generated, as  $T_W^*$-ideal, by the following $(W,*)$-polynomials:
        $$
        x_1^-x_2^-, 	\quad w_1^+x^-+x^-w_1^+, \quad (w_1^+)^2 x^\epsilon - x^\epsilon,
            \quad  w_i^+x^\epsilon, \quad w_j^-x^\epsilon,
        $$
        where $\epsilon\in \{+,-\}$, $i\geq 2$ and $j\geq 1$.
        Moreover  $c_n^{W,*}(M^D)=n2^{n+1}+2$. 
 	\end{theorem}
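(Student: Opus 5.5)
The proof has three parts: fix the basis of $W$ and check that the listed polynomials are identities of $M^D$; show that they generate $\I^{W,*}(M^D)$ by producing a spanning set of $P_n^{W,*}$ modulo the $T_W^*$-ideal $Q$ they generate; and show that the images of this spanning set in $P_n^{W,*}(M^D)$ are linearly independent, which also gives $c_n^{W,*}(M^D)$.

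\emph{Basis of $W$ and the listed identities.} Let $\Phi\colon W\to\M(M)$ be the acting $*$-homomorphism with image identified with $D$ (Proposition \ref{pro: unital subalgebra}). Both $e_{11}+e_{44}$ and $e_{22}+e_{33}$ are symmetric, so $D=D^+$, and since $\Phi$ is a $*$-homomorphism we get $\Phi(W^-)=0$. Put $e:=(e_{11}+e_{44})-(e_{22}+e_{33})$, so that $e^2=1_M$ and $D=F1_M\oplus Fe$. Since $\Phi(1_W)=1_M$ and $\Phi|_{W^+}$ is onto $D$, I choose $w_1^+\in W^+$ with $\Phi(w_1^+)=e$ and complete $\{1_W,w_1^+\}$ to a basis of $W^+$ by elements of $\ker\Phi\cap W^+$. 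Any basis of $W^-$ is then automatically contained in $\ker\Phi$. With this basis, $w_i^+x^\epsilon$ ($i\ge 2$) and $w_j^-x^\epsilon$ are identities, and so is $(w_1^+)^2x^\epsilon-x^\epsilon$ because $e^2=1$. A direct computation gives $e(e_{12}-e_{34})=e_{12}+e_{34}=-(e_{12}-e_{34})e$, which yields the anticommutation identity. Finally $x_1^-x_2^-\equiv 0$ because $M^-=F(e_{12}-e_{34})$ has zero square. Hence $Q\subseteq \I^{W,*}(M^D)$.

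\emph{Spanning set modulo $Q$.} Work in each $P_{r,n-r}$ separately and use \eqref{eq: decomp codim}.
\begin{itemize}
\item By the last two families of generators, every coefficient from $W$ can be taken in $\{1_W,w_1^+\}$, and by $(w_1^+)^2x\equiv x$ at most one $w_1^+$ survives between consecutive variables.
\item By Lemma \ref{Lem: consequence x_1^- x_2^-} together with $x_1^-x_2^-$, $P_{r,n-r}\subseteq Q$ whenever $n-r\ge 2$, so only $r=n$ and $r=n-1$ contribute.
\item I will exploit that the substitution $x^-\mapsto w_1^+x^++x^+w_1^+$ is allowed (both sides are skew), which links skew and symmetric variables. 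I will also derive consequences expressing that $x_i^+x_j^+$ commutes up to radical terms, namely that $[x_1^+,x_2^+][x_3^+,x_4^+]\in Q$ and that $w_1^+$ can be moved through products of symmetric variables at the cost of sign changes on the radical part.
\item With these, every monomial should reduce to a normal form with at most one $w_1^+$ in front, the symmetric variables in increasing order except for one distinguished position, and one $w_1^+$-marker locating the radical slot.
\end{itemize}
I aim for a spanning set of $P_{n,0}$ of size $n2^n+2$ and of $P_{n-1,1}$ of size $2^n$, which gives $c_n^{W,*}\le n2^n+2+n\cdot 2^n=n2^{n+1}+2$.

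\emph{Linear independence and the expected obstacle.} I would establish independence of the normal forms in $P_n^{W,*}(M^D)$ by evaluating on the basis $\{e_{11}+e_{44},\,e_{22}+e_{33},\,e_{12}+e_{34}\}$ of $M^+$ and $e_{12}-e_{34}$ of $M^-$. Since $J(M)^2=0$, a nonzero evaluation uses at most one radical element. The position of that element, the diagonal idempotents chosen to its left and right, and the $\pm$ pattern produced by $e$ should separate the normal forms by triangularity with respect to a suitable ordering. Because every element of $\I^{W,*}(M^D)$ reduces modulo $Q$ to a combination of normal forms that vanishes on $M^D$, this independence forces $\I^{W,*}(M^D)=Q$ and gives the exact codimension. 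I expect the main obstacle to be pinning down the normal forms exactly, so that the count comes out to $n2^{n+1}+2$ and the evaluations genuinely separate them. To find them, I would first compute $c_{r,n-r}$ directly via the generic-element model of Proposition \ref{pro: relatively free algebra} for small $n$, and then fit the normal forms to those dimensions.
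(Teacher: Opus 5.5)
\textbf{There is a genuine gap: the central step of the proof is missing.} Your framework is the paper's: verify $Q\subseteq\I^{W,*}(M^D)$, span $P_{n,0}$ and $P_{n-1,1}$ modulo $Q$, prove independence by evaluations on $M^D$, and count via \eqref{eq: decomp codim}. Your target sizes $n2^n+2$ and $2^n$ are the right ones. Your preliminary steps are also sound: the choice of basis of $W$ with $\Phi(W^-)=0$ because $D=D^+$, the check of the generators, and the reduction of coefficients to $\{1_W,w_1^+\}$. To kill $P_{r,n-r}$ for $n-r\geq 2$ you must first move interior $w_1^+$'s to an end, using $w_1^+x^-\equiv -x^-w_1^+$ and the skewness of $[w_1^+,x^+]$; Lemma \ref{Lem: consequence x_1^- x_2^-} alone does not see interior coefficients.

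The gap is that you never write down the normal forms. You read their number off the claimed value of $c_n^{W,*}(M^D)$, and both the reduction to them and their separation by evaluations are only asserted with ``should''. Fitting forms to small-$n$ data is a heuristic, so as written neither $\I^{W,*}(M^D)=Q$ nor the codimension formula is proved. One listed tool is also false: $(w_1^+x^+)^*=x^+w_1^+$, so $w_1^+x^++x^+w_1^+$ is symmetric. The skew element you may substitute for $x^-$ is $[w_1^+,x^+]$.

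\textbf{The missing idea is a commutator normal form.}
\begin{itemize}
\item Since $[x_1^+,x_2^+]$ and $[w_1^+,x^+]$ are skew, Lemma \ref{Lem: consequence x_1^- x_2^-} kills any product containing two such commutators. It also lets you reorder the symmetric variables on either side of a single one. This works once every $w_1^+$ has been pushed to the right end, which is legitimate as soon as a skew factor is present.
\item Combined with Poincar\'e--Birkhoff--Witt and the Jacobi identity, $P_{n,0}$ is spanned modulo $Q$ by:
  \begin{itemize}
  \item $x_1^+\cdots x_n^+v$;
  \item $x_{i_1}^+\cdots x_{i_m}^+[x_k^+,x_{j_1}^+,\ldots,x_{j_{n-m-1}}^+]v$ with $k>j_1<\cdots<j_{n-m-1}$;
  \item $x_{h_1}^+\cdots x_{h_p}^+[w_1^+,x_{l_1}^+,\ldots,x_{l_{n-p}}^+]v$ with $l_1<\cdots<l_{n-p}$.
  \end{itemize}
  Here the outer indices increase and $v\in\{1_W,w_1^+\}$. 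This gives $2+2(n2^{n-1}-2^n+1)+2(2^n-1)=n2^n+2$ elements.
\item The $w_1^+$ inside the last commutators is unavoidable for $n\geq 2$. Monomials with $w_1^+$ only at the two ends span at most $(n-2)2^n+6$ dimensions, namely $2$ plus twice the ordinary codimension of $UT_2$. This interior $w_1^+$ is what your ``$w_1^+$-marker'' would have to become.
\item For $P_{n-1,1}$ the forms are $x_{i_1}^+\cdots x_{i_k}^+x_n^-x_{j_1}^+\cdots x_{j_{n-k-1}}^+v$, and there are $2^n$ of them.
\end{itemize}

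\textbf{Independence is then a triangular evaluation argument.}
\begin{itemize}
\item For $P_{n,0}$, take an outer set that is maximal among nonzero coefficients and send its variables to $1_M$. Send the first variable of the commutator to $e_{12}+e_{34}$ and the remaining commutator variables to $e_{22}+e_{33}$.
\item For $P_{n-1,1}$, send the left and right blocks to $e_{11}+e_{44}$ and $e_{22}+e_{33}$, then swapped, and send $x_n^-$ to $e_{12}-e_{34}$.
\item Read the $e_{12}$- and $e_{34}$-coefficients separately. Right multiplication by $\Phi(w_1^+)$, i.e.\ by $e_{11}+e_{44}-e_{22}-e_{33}$, acts by $-1$ on $e_{12}$ and by $+1$ on $e_{34}$. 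So for the coefficients $\alpha,\beta$ of a form and its $w_1^+$-twin you obtain both $\alpha-\beta=0$ and $\alpha+\beta=0$, hence both vanish.
\end{itemize}
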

 	\begin{proof}
Let $\Phi$ denote the acting homomorphism of $W$ on $M^D$. Then $\Phi(W)\cong D$ and we can choose a basis $\mathcal{B}_W=\mathcal{B}_{W^+}\cup \mathcal{B}_{W^-}$  of $W=W^+\oplus W^-$, where $\mathcal{B}_{W^+}:=\{ w_i^+\, : \, i\geq 0\}$ and  $\mathcal{B}_{W^-}:=\{w_i^-\, : \, i\geq 1\}$ are basis of $W^+$ and $W^-$, respectively,  such that $w_0^+=1_W$,
 $$\Phi(w_1^+)=(R_{e_{11}+e_{44}-e_{22}-e_{33}},L_{e_{11}+e_{44}-e_{22}-e_{33}})\in \M(M) \qquad \mbox{and} \qquad w_i^+,\, w_j^-\in \ker \Phi \quad  \mbox{for all} \ i \geq 2, \, j\geq 1.$$ 
    Let $I$ be the $T_W^*$-ideal generated by the $(W,*)$-polynomials in the statement. It is easy to check that
            $I \subseteq \I^{W,*}(M^D)$. 
            
            To prove the opposite inclusion, let $f$ be a multilinear generalized $*$-polynomial of degree $n$ and assume, as we may, that $f\in P_{r,n-r}$ for some $0\leq r \leq n$.

            Let $\epsilon\in \{+,-\}$. Observe first that $x^\epsilon (w_1^+)^2 - x^\epsilon \in \langle (w_1^+)^2x^\epsilon - x^\epsilon\rangle_{T_W^*}$. Moreover, for $i\geq 2$ we have $x^{\epsilon}w_i^+, \, w_1^+w_i^+x^\epsilon, \, x^{\epsilon}w_i^+w_1^+ \in \langle w_i^+x^\epsilon\rangle_{T_W^*}$ and for $j\geq 1$ we have $x^{\epsilon}w_j^-, \, w_1^+w_j^-x^\epsilon, \, x^{\epsilon}w_j^-w_1^+\in \langle w_j^-x^\epsilon\rangle_{T_W^*}$. Furthermore since the $(W,*)$-polynomial $w_1^+ x^{\epsilon}+ x^{\epsilon}w_1^+$ is symmetric if $\epsilon = +$ and skew-symmetric if $\epsilon = -$, it follows that $x^{\epsilon}w_1^+w_i^+, \, w_i^+w_1^+x^\epsilon\in \langle w_i^+x^\epsilon\rangle_{T_W^*}$ for all $i\geq 2$, and $x^{\epsilon}w_1^+w_j^-, \, w_j^-w_1^+ x^\epsilon\in \langle w_j^-x^\epsilon\rangle_{T_W^*}$ for all $j\geq 1$. Hence, since $(w_1^+)^2 x^\epsilon - x^\epsilon,
            \,  w_i^+x^\epsilon, \, w_j^-x^\epsilon\in I$, for $\epsilon\in \{+,-\}$, $i\geq 2$ and $j\geq 1$,  we may suppose that, modulo $I$,  only $1_W$ and $w_1^+$ appear in $f$.

            Since $  [x^+_1, w_1^+]\in W\langle X, *\rangle^-$, by Lemma \ref{Lem: consequence x_1^- x_2^-} we get that for every $k\geq 0$
    \begin{equation*}
        [x^+_{k+1}, w_1^+] x_1^+ \cdots x_k^+ x_{k+2}^-, \quad  x^-_{k+2} x_1^+ \cdots x_k^+ [x_{k+1}^+,w_1^+]  \;\in I.
    \end{equation*} 
      Hence, using $ w_1^+x^-+x^-w_1^+, \, (w_1^+)^2 x^\epsilon - x^\epsilon, \, x^\epsilon (w_1^+)^2 - x^\epsilon\in I$, with $\epsilon\in \{ +,-\}$, we obtain for every $k\geq 0$
    \begin{equation}\label{eq: consequence w_1 1}
          v_{0}x_1^+ v_{1}x_2^+ \cdots v_{k-1} x_k^+ v_{k} x_{k+1}^- v_{k+1} \equiv x_1^+\cdots  x_k^+  x_{k+1}^- v \pmod{I}
    \end{equation}
    and
    \begin{equation}\label{eq: consequence w_1 2}
          v_{0} x_{k+1}^- v_{1} x_1^+ v_{2}x_2^+\cdots v_{k} x_k^+ v_{k+1}   \equiv x_{k+1}^-  x_1^+\cdots  x_k^+ v \pmod{I}
    \end{equation}
    where  $v, v_{0}, \ldots, v_{k+1}\in \{1_W, w_1^+\}$.

  Now, combining Lemma \ref{Lem: consequence x_1^- x_2^-} with \eqref{eq: consequence w_1 1}, we get for every $k\geq 0$
    \begin{equation}\label{eq: consequences}
        u_1x_{k+1}^- v_{0}x_1^+ v_{1}x_2^+\cdots v_{k-1} x_k^+ v_{k} x_{k+2}^- u_2\in I,
    \end{equation}
where  $u_1, u_2, v_{0}, \ldots, v_{k+1}\in \{1_W, w_1^+\}$. Thus, it follows that, modulo $I$, every monomial appearing in $f$ contains at most one variable of type $x^-$. In particular, either  $f\in P_{n-1,1}$ or $f\in P_{n,0}.$

 	Let us first consider $f\in P_{n-1,1}$. From  \eqref{eq: consequence w_1 1} and  \eqref{eq: consequence w_1 2} it follows that we can move $w_1^+$ to the end of each monomial of $f$. Moreover, by Lemma \ref{Lem: consequence x_1^- x_2^-} (see \cite[Remark 2]{MisVal}), in every monomial $x_{i_1}^+\cdots x_{i_{k}}^+x_{n}^-x_{j_{1}}^+ \cdots x_{j_{n-1-k}}^+$ we can reorder the variables to the left and to the right of $x_{n}^-$.
    Thus, we can write $f$, modulo $I$, as linear combination of the $(W,*)$-polynomials
    \begin{equation}\label{eq: generators P_n-1,1}
        x_{i_1}^+\cdots x_{i_{k}}^+x_{n}^-x_{j_{1}}^+ \cdots x_{j_{n-k-1}}^+, \quad x_{i_1}^+\cdots x_{i_{k}}^+x_{n}^-x_{j_{1}}^+ \cdots x_{j_{n-k-1}}^+w_1^+
    \end{equation}
     where $0\leq k \leq n-1$, $i_1< \ldots < i_k$ and $j_1< \ldots < j_{n-k-1}$.

Next, we prove that $(W,*)$-polynomials in \eqref{eq: generators P_n-1,1} are linearly independent modulo $\I^{W,*}(M^D).$ To this end, let us assume that $f\in \I^{W,*}(M^D)$ is a linear combination of the above $(W,*)$-polynomials, i.e.,
$$
f=\sum_{\mathcal{I}, \mathcal{J}} \alpha_{\mathcal{I}, \mathcal{J}} x_{i_1}^+\cdots x_{i_{k}}^+x_{n}^-x_{j_{1}}^+ \cdots x_{j_{n-k-1}}^+ + \sum_{\mathcal{I}, \mathcal{J}} \beta_{\mathcal{I}, \mathcal{J}} x_{i_1}^+\cdots x_{i_{k}}^+x_{n}^-x_{j_{1}}^+ \cdots x_{j_{n-k-1}}^+w_1^+
$$
where $\mathcal{I}=\{i_1, \ldots, i_k\}$ and $\mathcal{J}=\{j_1,\ldots, j_{n-k-1}\}$ are subsets of indices in $\{1,\ldots, n-1\}$ such that $\mathcal{I}\cap \mathcal{J}=\emptyset$ with $0\leq k \leq n-1$, $i_1< \cdots < i_k$, $j_1< \cdots < j_{n-k-1}$, and $\alpha_{\mathcal{I},\mathcal{J}}, \beta_{\mathcal{I},\mathcal{J}}\in F$ for all $\mathcal{I}, \mathcal{J}$.

Let $\mathcal{I}=\{i_1, \ldots, i_k\}$ and $\mathcal{J}=\{j_1,\ldots, j_{n-k-1}\}$ be two fixed disjoint subsets of $\{1,\ldots, n-1\}$ such that $0<k < n-1$, $i_1< \cdots < i_k$, $j_1< \cdots < j_{n-k-1}$.
If we make the evaluation $ x_{i_1}^+=\cdots =x_{i_{k}}^+= e_{11}+e_{44}$, $x_{j_{1}}^+ =\cdots= x_{j_{n-k-1}}^+= e_{22}+e_{33}$, and $x_n^-=e_{12}-e_{34}$, we get $(\alpha_{\mathcal{I}, \mathcal{J}} - \beta_{\mathcal{I}, \mathcal{J}})e_{12}-(\alpha_{\mathcal{J}, \mathcal{I}} + \beta_{\mathcal{J}, \mathcal{I}}) e_{34} =0$,  that is
\begin{equation}\label{eq: evaluation 1}
    \alpha_{\mathcal{I}, \mathcal{J}} - \beta_{\mathcal{I}, \mathcal{J}}=0 \quad \text{and} \quad \alpha_{\mathcal{J}, \mathcal{I}} + \beta_{\mathcal{J}, \mathcal{I}}=0.
\end{equation}
Moreover, let us make the evaluation $ x_{i_1}^+=\cdots =x_{i_{k}}^+= e_{22}+e_{33}$, $x_{j_{1}}^+ =\cdots= x_{j_{n-k-1}}^+= e_{11}+e_{44}$, and $x_n^-=e_{12}-e_{34}$. In this case we get $-(\alpha_{\mathcal{I}, \mathcal{J}} + \beta_{\mathcal{I}, \mathcal{J}})e_{34}+(\alpha_{\mathcal{J}, \mathcal{I}} - \beta_{\mathcal{J}, \mathcal{I}}) e_{12} =0$,  thus
\begin{equation}\label{eq: evaluation 2}
   \alpha_{\mathcal{I}, \mathcal{J}} + \beta_{\mathcal{I}, \mathcal{J}}=0 \quad \text{and} \quad \alpha_{\mathcal{J}, \mathcal{I}} - \beta_{\mathcal{J}, \mathcal{I}}=0.
\end{equation}
Hence, by putting together \eqref{eq: evaluation 1} and \eqref{eq: evaluation 2} we obtain that $ \alpha_{\mathcal{I}, \mathcal{J}} = \beta_{\mathcal{I}, \mathcal{J}}=\alpha_{\mathcal{J}, \mathcal{I}} = \beta_{\mathcal{J}, \mathcal{I}}=0$.

Finally, fixed $\mathcal{I}=\{1,\ldots,n-1\}$, if we evaluate $x_1=\cdots = x_{n-1}=e_{11}+e_{44}$ and $x_n^-=e_{12}-e_{34}$, then we get $(\alpha_{\mathcal{I}, \emptyset} - \beta_{\mathcal{I}, \emptyset})e_{12}-(\alpha_{\emptyset, \mathcal{I}} + \beta_{\emptyset, \mathcal{I}}) e_{34} =0$,  thus
\begin{equation}\label{eq: evaluation 3}
    \alpha_{\mathcal{I}, \emptyset} - \beta_{\mathcal{I}, \emptyset}=0 \quad \text{and} \quad \alpha_{\emptyset, \mathcal{I}} + \beta_{\emptyset, \mathcal{I}}=0.
\end{equation}
Moreover, from the evaluation $x_1=\cdots = x_{n-1}=e_{22}+e_{33}$ and $x_n^-=e_{12}-e_{34}$, it follows that $-(\alpha_{\mathcal{I}, \emptyset} + \beta_{\mathcal{I}, \emptyset})e_{34}+(\alpha_{\emptyset, \mathcal{I}} - \beta_{\emptyset, \mathcal{I}}) e_{12} =0$,  thus
\begin{equation}\label{eq: evaluation 4}
   \alpha_{\mathcal{I}, \emptyset} + \beta_{\mathcal{I}, \emptyset}=0 \quad \text{and} \quad \alpha_{\emptyset, \mathcal{I}} - \beta_{\emptyset, \mathcal{I}}=0.
\end{equation}
Therefore, putting together \eqref{eq: evaluation 3} and \eqref{eq: evaluation 4} we get  $\alpha_{\mathcal{I}, \emptyset}= \beta_{\mathcal{I}, \emptyset}=\alpha_{\emptyset, \mathcal{I}} = \beta_{\emptyset, \mathcal{I}}=0$. As a consequence, all the scalars appearing in $f$ are zero.

Thus, the polynomials in \eqref{eq: generators P_n-1,1} are linearly independent modulo $\I^{W,*}(M^D)$ and, since $P_{n-1,1}\cap I\subseteq P_{n-1,1}\cap \I^{W,*}(M^D),$ this proves that
the polynomials in \eqref{eq: generators P_n-1,1} are a basis of $P_{n-1,1}$ modulo $P_{n-1,1}\cap \I^{W,*}(M^D).$ Hence, by counting we get
\begin{equation}\label{eq: c_{n-1,1}(M^D)}
    c_{n-1,1}(M^D)= 2 \sum_{k=0}^{n-1}\binom{n-1}{k}=2^n.
\end{equation}

Now, let us consider $f\in P_{n,0}$. Since $[x_1^+,x_2^+], [w_1^+, x_1^+] \in W\langle X,* \rangle^-$, from \eqref{eq: consequence w_1 1} we obtain for every $k\geq 0$
    \begin{align}\label{eq: consequence w_1 3}
          v_{0}x_1^+ v_{1}x_2^+\cdots v_{k-1} x_k^+ v_{k} [x_{k+1}^+,x_{k+2}^+] v_{k+1} &\equiv x_1^+\cdots  x_k^+  [x_{k+1}^+,x_{k+2}^+]  v \pmod{I},\\
          \label{eq: consequence w_1 3bis}
 v_{0}x_1^+ v_{1}x_2^+\cdots v_{k-1} x_k^+ v_{k} [w_1^+, x_{k+1}^+] v_{k+1} &\equiv x_1^+\cdots  x_k^+  [w_1^+, x_{k+1}^+]  v \pmod{I},
 \end{align}
 and from \eqref{eq: consequence w_1 2} we get for every $k\geq 0$
    \begin{align}
          \label{eq: consequence w_1 4}
          v_{0} [x_{k+1}^+,x_{k+2}^+] v_{1} x_1^+ v_{2}x_2^+\cdots v_{k} x_k^+ v_{k+1}   &\equiv [x_{k+1}^+,x_{k+2}^+]  x_1^+\cdots  x_k^+ v \pmod{I},\\
          \label{eq: consequence w_1 4 bis}
            v_{0} [w_1^+, x_{k+1}^+] v_{1} x_1^+ v_{2}x_2^+\cdots v_{k} x_k^+ v_{k+1}   &\equiv [w_1^+, x_{k+1}^+]  x_1^+\cdots  x_k^+ v \pmod{I}.
    \end{align}
    where  $v, v_{0}, \ldots, v_{k+1}\in \{1_W, w_1^+\}$. As a consequence, by the Poincaré-Birkhoff-Witt Theorem, $f$ can be written, modulo $I$, as a linear combination of polynomials of the type
    \begin{equation}\label{eq: PBW}
        x_{i_1}^+\cdots x_{i_m}^+c_1c_2\cdots c_s v,
    \end{equation}
    where $i_1<\cdots < i_m,$ $c_1,\ldots, c_s$ are left-normed commutators containing symmetric variables and $w_1^+$, and $v\in \{1_W, w_1^+\}$.

     Now, by  \eqref{eq: consequences} and again since $[x_1^+,x_2^+], [x_1^+,w_1^+]\in W\langle X,* \rangle^-$, we get for every $k\geq 0$
     \begin{align*}
          u_1[x_{k+1}^+,x_{k+2}^+] v_{0}x_1^+ v_{1}x_2^+\cdots v_{k-1} x_k^+ v_{k} [x_{k+3}^+,x_{k+4}^+]u_2 \, &\in I,\\
          u_1[w_1^+, x_{k+1}^+] v_{0}x_1^+ v_{1}x_2^+\cdots v_{k-1} x_k^+ v_{k} [x_{k+2}^+,x_{k+3}^+]u_2 \, &\in I,\\
          u_1[x_{k+1}^+,x_{k+2}^+] v_{0}x_1^+ v_{1}x_2^+\cdots v_{k-1} x_k^+ v_{k} [w_1^+, x_{k+3}^+]u_2 \, &\in I,\\
           u_1[w_1^+, x_{k+1}^+] v_{0}x_1^+ v_{1}x_2^+\cdots v_{k-1} x_k^+ v_{k} [w_1^+, x_{k+2}^+]u_2 \, &\in I,
          \end{align*}
where  $u_1, u_2, v_{0}, \ldots, v_{k}\in \{1_W, w_1^+\}$. Thus it follows that in \eqref{eq: PBW} only a left-normed commutator can appear, i.e., $s\in \{0,1\}$. Moreover, by \eqref{eq: consequence w_1 3}--\eqref{eq: consequence w_1 4 bis}, we can assume that if $w_1^+$ appears in the commutator $c_1$, then it is in the first position. Hence, $f$ can be written, modulo $I$, as a linear combination of $(W,*)$-monomials $x_1\cdots x_n v$ and the $(W,*)$-polynomials of the type
\begin{equation*}\label{eq: almost a basis}
    x_{i_1}^+\cdots x_{i_{m}}^+[x_{k}^+,x_{j_1}^+,\ldots, x_{j_{n-m-1}}^+]v, \quad x_{i_1}^+\cdots x_{i_{m}}^+[w_1^+, x_{l_1}^+,\ldots, x_{l_{n-m}}^+]v
\end{equation*}
 where $0\leq m \leq n-2$, $i_1<\cdots < i_m$ and $v\in \{1_W, w_1^+\}$.  Since $[x_{k}^+, x_{j_1}^+], [w_1^+,x_{l_1}^+]\in W \langle X,*\rangle^-$, Lemma \ref{Lem: consequence x_1^- x_2^-} implies that we can reorder the variables to the left and to the right of $[x_{k}^+,x_{j_1}^+]$ and $[w_1^+,x_{l_1}^+]$ (see \cite[Remark 2]{MisVal}). Hence we may assume that $j_2<  \cdots < j_{n-m-1}$ and  $l_2<  \cdots < l_{n-m}$.
 Moreover, by the Jacobi identity we may also suppose that $k>j_1<j_2$.  Additionally, applying the Jacobi identity, \eqref{eq: consequence w_1 3} and \eqref{eq: consequence w_1 4}, we may suppose that $l_1<l_2$. This implies that we can write $f$, modulo $I$, as linear combination of the $(W,*)$-polynomials
 \begin{equation}\label{eq: generators P_n,0}
 \begin{split}
      &  x_{i_1}^+\cdots x_{i_{m}}^+[x_{k}^+,x_{j_1}^+,\ldots, x_{j_{n-m-1}}^+], \quad  x_{i_1}^+\cdots x_{i_{m}}^+[x_{k}^+,x_{j_1}^+,\ldots, x_{j_{n-m-1}}^+]w_1^+ \\
  &  x_{h_1}^+\cdots x_{h_{p}}^+[w_1^+, x_{l_1}^+,\ldots, x_{l_{n-p}}^+], \quad  x_{h_1}^+\cdots x_{h_{p}}^+[w_1^+, x_{l_1}^+,\ldots, x_{l_{n-p}}^+]w_1^+,
 \end{split}
\end{equation}
where $0\leq m \leq n$, $m\neq n-1$, $0\leq p\leq n-1$, $i_1 < \cdots < i_m$, $k>j_1< \cdots < j_{n-m-1}$, $h_1< \cdots < h_p$, $l_1< \cdots < l_{n-p}$.

Next, we prove that these $(W,*)$-polynomials are linearly independent modulo $\I^{W,*}(M^D)$. To this end, let us assume $f\in \I^{W,*}(M^D)$ is such that
\begin{equation}\label{eq: id}
\begin{split}
     f= & \sum_{\mathcal{I}, k} \alpha_{\mathcal{I}, k} x_{i_1}^+\cdots x_{i_{m}}^+[x_{k}^+,x_{j_1}^+,\ldots, x_{j_{n-m-1}}^+] + \sum_{\mathcal{I}, k} \beta_{\mathcal{I}, k} x_{i_1}^+\cdots x_{i_{m}}^+[x_{k}^+,x_{j_1}^+,\ldots, x_{j_{n-m-1}}^+]w_1^+ \\
  & +\sum_{\mathcal{H}} \gamma_{\mathcal{H}} x_{h_1}^+\cdots x_{h_{p}}^+[w_1^+, x_{l_1}^+,\ldots, x_{l_{n-p}}^+]+ \sum_{\mathcal{H}} \delta_{\mathcal{H}} x_{h_1}^+\cdots x_{h_{p}}^+[w_1^+, x_{l_1}^+,\ldots, x_{l_{n-p}}^+]w_1^+,
\end{split}
\end{equation}
where $0\leq m \leq n$, $m\neq n-1$, $0\leq p\leq n-1$, $i_1< \cdots < i_m$, $k>j_1< \cdots < j_{n-m-1}$, $h_1< \cdots < h_p$, $l_1<\cdots < l_{n-p}$,  $\mathcal{I}=\{i_1, \ldots, i_m\}$, $\mathcal{H}=\{h_1,\ldots, h_p\}$ and $\alpha_{\mathcal{I}, k}, \beta_{\mathcal{I},k},\gamma_{\mathcal{H}}, \delta_{\mathcal{H}} \in F$ for all $\mathcal{I}, \mathcal{H}$.

First, suppose that $\alpha_{\mathcal{I},0} \neq 0$ or $\beta_{\mathcal{I},0} \neq 0$ in \eqref{eq: id}  with  $ \mathcal{I}=\{1,\ldots, n\}.$ From the evaluation $x_1^+=\cdots = x_n^+=e_{11}+e_{22}+e_{33}+e_{44}$ it follows that $\alpha_{\mathcal{I},0} (e_{11}+e_{22}+e_{33}+e_{44})+ \beta_{\mathcal{I},0}(e_{11}+e_{44}-e_{22}-e_{33})=0 $, thus $\alpha_{\mathcal{I},0} =\beta_{\mathcal{I},0}=0$, a contradiction. This says that in \eqref{eq: id} only terms with one commutator can appear with a non-zero coefficient.

Now, let $k\in \{1,\ldots, n\}$ and let $\mathcal{I}=\{i_1, \ldots, i_m\}$ be a proper subset of $\{1,\ldots, n\}\setminus\{k\}$ where $i_1<\cdots <i_m$ and $ 0 \leq m\leq n-2$ is the maximum such that $\alpha_{\mathcal{I},k} \neq 0$ or $\beta_{\mathcal{I},k}\neq 0$ in \eqref{eq: id}. If we consider the evaluation $x_{i_1}^+=\cdots =x_{i_m}^+=e_{11}+e_{22}+e_{33}+e_{44},$
$ x_{k}^+= e_{12}+e_{34},$  and $x_{j_1}^+=\cdots = x_{j_{n-m-1}}^+=e_{22}+e_{33},$ then the third and fourth summations in \eqref{eq: id} are evaluated to zero since $\Phi(w_1^+)(e_{22}+e_{33})- (e_{22}+e_{33})\Phi(w_1^+)=0$, and we get $$(\alpha_{\mathcal{I}, k} -  \beta_{\mathcal{I},k})e_{12} + (-1)^{n-m-1} (\alpha_{\mathcal{I}, k} +  \beta_{\mathcal{I},k}) e_{34} =0.$$
Thus $ \alpha_{\mathcal{I}, k} -  \beta_{\mathcal{I},k}=0$ and $\alpha_{\mathcal{I}, k} +  \beta_{\mathcal{I},k}=0.$
As a consequence, it follows that $\alpha_{\mathcal{I}, k}= \beta_{\mathcal{I},k}=0$, a contradiction. By repeating this argument we obtain that $\alpha_{\mathcal{I}, k}= \beta_{\mathcal{I},k}=0$ for all $\mathcal{I}$ and $k$.

Now suppose that $\mathcal{H}=\{h_1, \ldots, h_p\}$ is a proper subset of $\{1,\ldots, n\}$ where $h_1<\cdots <h_p$ and $ 0 \leq p\leq n-2$ is the maximum such that $\gamma_{\mathcal{H}} \neq 0$ or $\delta_{\mathcal{H}}\neq 0$ in \eqref{eq: id}, and consider the evaluation $x_{h_1}^+=\cdots =x_{h_p}^+=e_{11}+e_{22}+e_{33}+e_{44},$
$ x_{l_1}^+= e_{12}+e_{34},$  and $x_{l_2}^+=\cdots = x_{l_{n-p}}^+=e_{22}+e_{33}.$ Under this evaluation, we obtain
$$2(\gamma_{\mathcal{H}} -  \delta_{\mathcal{H}})e_{12} + 2(-1)^{n-p-1} (\gamma_{\mathcal{H}} +  \delta_{\mathcal{H}}) e_{34} =0.$$
As a consequence, since $\cha F=0$, we get that $\gamma_{\mathcal{H}}=  \delta_{\mathcal{H}}=0$, a contradiction. As above, by repeating this argument we obtain that $\gamma_{\mathcal{H}}=  \delta_{\mathcal{H}}=0$ for all $\mathcal{H}$. As a consequence, all the scalars appearing in $f$ are zero, i.e., $f=0.$

Thus, the polynomials in \eqref{eq: generators P_n,0} are linearly independent modulo $\I^{W,*}(M^D)$ and,
as in the previous case, this implies that the $(W,*)$-polynomials in \eqref{eq: generators P_n,0} are a basis of $P_{n,0}$ modulo $P_{n,0}\cap \I^{W,*}(M^D).$ Hence, by counting we get
\begin{equation}\label{eq: c_{n,0}(M^D)}
    c_{n,0}(M^D)= 2+2\sum_{m=2}^n\binom{n}{m}(m-1)+ 2\sum_{m=0}^{n-1}\binom{n}{m}=2\sum_{m=0}^n\binom{n}{m}m +2= n2^n +2.
\end{equation}

Since we have shown that $P_{r,n-r}\cap \I^{W,*}(M^D) \supseteq P_{r,n-r}\cap I$  for all $n\in\N$ and $0\leq r\leq n$, it follows that $\I^{W,*}(M^D)=I$, as claimed.

Finally, since $P_{r,n-r}\equiv 0 \pmod{(P_{r,n-r}\cap \I^{W,*}(M^D))}$ for $0\leq r \leq n-2$, applying Formula~\eqref{eq: decomp codim} together with \eqref{eq: c_{n-1,1}(M^D)} and \eqref{eq: c_{n,0}(M^D)}, we obtain
\begin{align*}
 	c_n^{W,*}(M^D)=\sum_{r=0}^n\binom{n}{r}c_{r,n-r}^{W,*}(M^D)=c_{n,0}^{W,*}(M^D)+ n c_{n-1,1}^{W,*}(M^D)=n2^{n}+2+ n  2^{n}= n2^{n+1}  +2,
 	\end{align*}
    as desired.
 	\end{proof}

\begin{theorem}\label{thm: id e cn M^F}
    Let $M^F$ be a $(W,*)$-algebra $M$ where $W$ acts on it as the algebra $F$ by left and right multiplication.
     Then there exists a basis $\mathcal{B}_W=\mathcal{B}_{W^+}\cup \mathcal{B}_{W^-}$  of $W$, where $\mathcal{B}_{W^+}:=\{1_W, \, w_i^+\, : \, i\geq 1\}$ and  $\mathcal{B}_{W^-}:=\{w_i^-\, : \, i\geq 1\}$ are bases of $W^+$ and $W^-$, respectively, such that 
   the $T_W^*$-ideal $\I^{W,*}(M^F)$  of generalized $*$-identities of $M^F$ is generated, as  $T_W^*$-ideal, by the following polynomials: 
        $$
       x_1^-x_2^-, \quad w_i^\eta x^\epsilon,
        $$
        where $\epsilon,\eta\in \{+,-\}$ and  $i\geq 1$. Moreover $c_n^{W,*}(M^F)= (n-1)2^{n}+2$.
\end{theorem}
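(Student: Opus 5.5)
We follow the same scheme as the proof of Theorem \ref{thm: id e cn M^D}, which here becomes simpler because $W$ acts only through scalars. Since $\Phi(W)\cong F1_M$, the kernel of $\Phi$ has codimension one in $W$ and is a $\sharp$-invariant subspace ($\Phi$ is a $*$-homomorphism). We choose $w_0^+=1_W$ and a basis $\{w_i^+,w_j^-\}$ of $\ker\Phi=(\ker\Phi)^+\oplus(\ker\Phi)^-$; since $\Phi(1_W)=(\ide,\ide)$, these together form a basis of $W$ of the required form. Let $I$ be the $T_W^*$-ideal generated by $x_1^-x_2^-$ and the $w_i^\eta x^\epsilon$. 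The inclusion $I\subseteq \I^{W,*}(M^F)$ is immediate: $M^-=F(e_{12}-e_{34})$ squares to zero, and each $w_i^\eta$ acts as zero.

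For the reverse inclusion, take $f\in P_{r,n-r}$. Applying the involution to $w_i^\eta x^\epsilon$ gives $\pm x^\epsilon (w_i^\eta)^\sharp$, and $(w_i^\eta)^\sharp=\eta w_i^\eta$, so $x^\epsilon w_i^\eta\in I$ as well. Substituting a product for $x$ (which is allowed, since $x^\epsilon$ may be replaced by any symmetric or skew element, e.g. $y^+ x^\epsilon + \dots$ after linearization) lets us kill any monomial containing some $w_i^\eta$ with $i\ge 1$ in an interior position too. Hence modulo $I$ the polynomial $f$ is an ordinary multilinear $*$-polynomial. From this point the argument is the classical one for $(M,*)$ in \cite{MisVal}. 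By Lemma \ref{Lem: consequence x_1^- x_2^-}, $f$ is zero modulo $I$ unless $r\ge n-1$.

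In $P_{n-1,1}$ we reorder the symmetric variables on each side of $x_n^-$, which gives the spanning set $x_{i_1}^+\cdots x_{i_k}^+x_n^-x_{j_1}^+\cdots x_{j_{n-1-k}}^+$. This has $2^{n-1}$ elements. In $P_{n,0}$, the PBW argument together with the fact that at most one commutator survives (commutators of symmetric elements are skew) gives the spanning set consisting of $x_1^+\cdots x_n^+$ and the polynomials $x_{i_1}^+\cdots x_{i_m}^+[x_k^+,x_{j_1}^+,\dots,x_{j_{n-m-1}}^+]$ with $k>j_1<\cdots$. Their number is $1+\sum_{m\ge 2}\binom nm(m-1)=(n-2)2^{n-1}+2$.

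Linear independence modulo $\I^{W,*}(M^F)$ is shown by exactly the same evaluations used for $M^D$, setting $\beta=\delta=0$. For $P_{n-1,1}$, the variables $x_{\mathcal I}$ are evaluated at $e_{11}+e_{44}$, the variables $x_{\mathcal J}$ at $e_{22}+e_{33}$, and $x_n^-$ at $e_{12}-e_{34}$; reading off the $e_{12}$ and $e_{34}$ coefficients separates $\alpha_{\mathcal I,\mathcal J}$ from $\alpha_{\mathcal J,\mathcal I}$. For $P_{n,0}$ we use the evaluation by $1_M$, and then for the commutator terms, with $\mathcal I$ maximal, we send $x_{\mathcal I}\mapsto 1_M$, $x_k\mapsto e_{12}+e_{34}$ and the remaining variables to $e_{22}+e_{33}$.

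Summing, the codimension is
$$
c_n^{W,*}(M^F)=(n-2)2^{n-1}+2+n2^{n-1}=(n-1)2^n+2.
$$
The main step needing care is the first reduction: one must justify that the generators $w_i^\eta x^\epsilon$ really eliminate $W$-coefficients in all positions, including those between variables. This uses the fact that $x^\epsilon$ can be substituted by symmetric or skew elements such as $x_1^+x_2^\epsilon \pm x_2^\epsilon x_1^+$, combined with multilinearity, to produce $u\,w_i^\eta\,v$-type terms. The rest repeats Theorem \ref{thm: id e cn M^D}.
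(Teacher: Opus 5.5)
Your proposal is correct and follows essentially the paper's route. You reduce modulo the generators $w_i^\eta x^\epsilon$ to ordinary $*$-polynomials of $M$, then use the same spanning sets and evaluations as in the $M^D$ case; the paper delegates this to \cite{MisVal} and to the proof of Theorem \ref{thm: id e cn M^D}. This yields both the generators and $c_n^{W,*}(M^F)=(n-1)2^n+2$.

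The step you flag as delicate is immediate and needs no substitution. $I$ is a two-sided $W$-invariant ideal, so any monomial with some $w_i^\eta$ ($i\geq 1$) in an interior position already lies in $I$, as a multiple of $w_i^\eta x^\epsilon$.
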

\begin{proof}
Let $\Phi$ denote the acting homomorphism of $W$ on $M^F$. Then $\Phi(W)\cong F$ and we can choose a basis $\mathcal{B}_W=\mathcal{B}_{W^+}\cup \mathcal{B}_{W^-}$  of $W=W^+\oplus W^-$, where $\mathcal{B}_{W^+}:=\{ w_i^+\, : \, i\geq 0\}$ and  $\mathcal{B}_{W^-}:=\{w_i^-\, : \, i\geq 1\}$ are bases of $W^+$ and $W^-$, respectively,  such that $w_0^+=1_W$, and 
 $ w_i^+,w_i^-\in \ker \Phi$ for all $ i \geq 1.$ 
      Clearly, the $(W,*)$-identities of $M^F$ correspond to the ordinary $*$-identities of $M$ modulo the $T_W^*$-ideal $\langle w_i^{\eta} x^\epsilon   :  \epsilon,\eta\in \{+,-\}, \, i\geq 1\rangle_{T_W^*}$. Thus by \cite[Theorem 2]{MisVal} we obtain the required conclusion:
      $$\I^{W,*}(M^F)=\langle x_1^-x_2^-, \, w_i^{\eta} x^\epsilon : \epsilon,\eta\in \{+,-\}, \, i\geq 1\rangle_{T_W^*}.
      $$

      Moreover, by Lemma~\ref{Lem: consequence x_1^- x_2^-} and arguing along the same lines as in the proof of Theorem~\ref{thm: id e cn M^D}, with the necessary changes, we obtain the following.

      The $(W,*)$-polynomials
     \begin{equation*}
  x_{i_1}^+\cdots x_{i_{m}}^+[x_{k}^+,x_{j_1}^+,\ldots, x_{j_{n-m-1}}^+],
\end{equation*}
where $0\leq m \leq n$, $m\neq n-1$, $i_1 < \cdots < i_m$, $k>j_1< \cdots < j_{n-m-1}$, and the $(W,*)$-polynomials
  \begin{equation*}
        x_{i_1}^+\cdots x_{i_{k}}^+x_{n}^-x_{j_{1}}^+ \cdots x_{j_{n-k-1}}^+,
    \end{equation*}
     where $0\leq k \leq n-1$, $i_1< \cdots < i_k$, $j_1< \cdots < j_{n-k-1}$, form a basis of $P_{r,n-r}$ modulo $P_{r,n-r}\cap \I^{W,*}(M^F)$ for $r=n$ and  $r=n-1$, respectively. Moreover, $P_{r,n-r}\equiv 0  \pmod{(P_{r,n-r}\cap \I^{W,*}(M^F))}$ for $0\leq r \leq n-2$. Consequently, a direct counting argument yields
$$c_{r,n-r}(M^F)=\begin{cases}
(n-2)2^{n-1} +2,  & \mbox{ if } r=n, \\
2^{n-1},  & \mbox{ if } r=n-1, \\
0, & \mbox{ if } 0\leq r\leq n-2.
\end{cases}$$
Finally, by Formula~\eqref{eq: decomp codim}, we conclude that $c_n^{W,*}(M^F)= (n-1)2^{n}+2$, which completes the proof.
\end{proof}

Recall that $M^D$ and $M^F$ denote fixed representatives of two distinct
equivalence classes in the sense of Definition~\eqref{def: equivalence}.
More precisely, there exist acting $*$-homomorphisms $\Phi_D\colon W\to\M(M)$ and $\Phi_F\colon W \to \M(M)$ with images
$\Phi_D(W)\cong D$ and $\Phi_F(W)\cong F$ such that the $(W,*)$-algebras $M^D$ and $M^F$ are precisely
$M^{\Phi_D}$ and $M^{\Phi_F}$, respectively. Therefore, to prove that $M^D$ and $M^F$ generate distinct
$(W,*)$-varieties, it suffices to show that, for every choice of acting
$*$-homomorphisms $\Phi_D$ and $\Phi_F$ of $W$ on $M$ with
$\Phi_D(W)\cong D$ and $\Phi_F(W)\cong F$, the $(W,*)$-algebras $M^{\Phi_D}$ and $M^{\Phi_F}$ generate distinct $(W,*)$-varieties.

\begin{corollary}\label{cor: mf md distinct}
    The $(W,*)$-algebras $M^F$ and $M^D$ generates two distinct $(W,*)$-varieties.
\end{corollary}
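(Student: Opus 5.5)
The plan is to exhibit a concrete $(W,*)$-polynomial that is a $(W,*)$-identity of one algebra but not of the other. Since varieties correspond bijectively to $T_W^*$-ideals, this shows $\I^{W,*}(M^F)\neq \I^{W,*}(M^D)$. The codimensions computed in Theorems~\ref{thm: id e cn M^D} and~\ref{thm: id e cn M^F} already differ, $n2^{n+1}+2\neq (n-1)2^n+2$, which would settle the question for a fixed pair of representatives. However, the paragraph preceding the statement asks for the conclusion for \emph{every} choice of acting $*$-homomorphisms $\Phi_D,\Phi_F$ with images isomorphic to $D$ and $F$. Under that requirement the codimension formulas still apply: the proofs of the two theorems only use that the image of the acting map is $D$, respectively $F$ (acting by left and right multiplication), so they hold for each such choice. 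Hence $c_n^{W,*}(M^{\Phi_D})\neq c_n^{W,*}(M^{\Phi_F})$ for all $n\geq 1$, and the codimension sequence is an invariant of the variety. This already gives the result.

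For a more transparent identity-based argument, I would proceed as follows. Fix $\Phi_D,\Phi_F$. Since $\Phi_D(W)\cong D$ is two-dimensional, pick $w\in W^+$ with $\Phi_D(w)$ given by multiplication by $e_{11}+e_{44}-e_{22}-e_{33}$; such a symmetric $w$ exists because $D$ consists of symmetric elements and $\Phi_D$ is a $*$-homomorphism. Since $\Phi_F(W)=F\,(\ide,\ide)$, we can write $\Phi_F(w)=\lambda\,\ide$ for some $\lambda\in F$. Consider the polynomial $g=wx^- - \lambda x^-$. In $M^{\Phi_F}$ it evaluates to $0$, so $g\in \I^{W,*}(M^{\Phi_F})$. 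In $M^{\Phi_D}$, evaluating $x^-=e_{12}-e_{34}$ gives
\[
(e_{11}+e_{44}-e_{22}-e_{33})(e_{12}-e_{34}) - \lambda(e_{12}-e_{34}) = (1-\lambda)e_{12} - (-1-\lambda)e_{34},
\]
which equals $(1-\lambda)e_{12}+(1+\lambda)e_{34}$. This is nonzero for every $\lambda$, since the two coefficients cannot both vanish. Therefore $g\notin \I^{W,*}(M^{\Phi_D})$.

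The main (mild) obstacle is ensuring the choice of $w$ and $\lambda$ is independent of the normalization of bases used in the theorems. Choosing $\lambda$ directly from $\Phi_F(w)$, as above, handles this. If one prefers to avoid the dependence on $\lambda$ entirely, the polynomial $[w,x^+_1]$ evaluated at $x_1^+=e_{12}+e_{34}$ works as well: it vanishes identically in $M^{\Phi_F}$ because $\Phi_F(w)$ is a scalar, while in $M^{\Phi_D}$ it gives $2e_{12}-2e_{34}\neq 0$. Either way, the two $T_W^*$-ideals differ, so $M^F$ and $M^D$ generate distinct $(W,*)$-varieties.
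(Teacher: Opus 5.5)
Your proof is correct. It differs from the paper's argument in approach, and on one point it is more careful than the paper.

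The paper proves mutual non-containment using elements of the generating sets from Theorems~\ref{thm: id e cn M^D} and~\ref{thm: id e cn M^F}. It uses $(w_1^+)^2x^+-x^+$ to get $M^F\notin\V^{W,*}(M^D)$, and $w_1^+x^+$ to get $M^D\notin\V^{W,*}(M^F)$.

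Your first argument, that the codimension sequences differ, is valid, but it depends on the full computations in those two theorems. By itself it only gives $\I^{W,*}(M^{\Phi_F})\neq\I^{W,*}(M^{\Phi_D})$. Since $c_n^{W,*}(M^D)>c_n^{W,*}(M^F)$, it also gives $M^D\notin\V^{W,*}(M^F)$, but not the reverse non-containment.

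Your second argument is a one-directional version of the second half of the paper's proof, and it fixes a real slip there. The paper asserts that $\Phi_F(w_1^+)=(0,0)$, and that $wx^+\in\I^{W,*}(M^{\Phi_F})$ for every $w$ linearly independent of $1_W$. In general, however, $\Phi_F(w_1^+)=\lambda(\ide,\ide)$ for a scalar $\lambda$ that depends on $\Phi_F$. Only elements of $\ker\Phi_F$ annihilate $x^+$. For example, take $W=F\times F$, let $\Phi_D$ be the isomorphism onto $D$ and $\Phi_F$ the first projection; then $\lambda=1$. In that case $(w_1^+)^2x^+-x^+$ is also an identity of $M^{\Phi_F}$, so the paper's first direction would need a different witness, such as $w_1^+x^-+x^-w_1^+$.

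You avoid this dependence by using $w-\lambda 1_W\in\ker\Phi_F$, or alternatively the commutator $[w,x_1^+]$. Your evaluations $(1-\lambda)e_{12}+(1+\lambda)e_{34}\neq 0$ and $2e_{12}-2e_{34}\neq 0$ are correct.

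The statement only requires the two varieties to be distinct, so your single non-containment is enough. The paper's two-sided argument, once repaired, gives the stronger fact that neither algebra lies in the variety generated by the other.
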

\begin{proof}
Given an acting $*$-homomorphism $\Phi_D$  of $W$ on $M$ such that $\Phi_D(W)\cong D$,
    by Theorem \ref{thm: id e cn M^D}  there exists a symmetric element  $w_1^+$ of $W$ linearly independent with $1_W$ such that $(w_1^+)^2x^+-x^+\in \I^{W,*}(M^{\Phi_D})$.
    For any acting $*$-homomorphism $\Phi_F$  of $W$ on $M$ such that $\Phi_F(W)\cong F$, we have that $\Phi_F(w_1^+)=(0,0)\in \M(M)$.  As a consequence, $(w_1^+)^2x^+-x^+\notin \I^{W,*}(M^F)$; indeed, $(w_1^+)^2 1_M - 1_M=-1_M\neq 0$. Therefore, $\I^{W,*}(M^{\Phi_D})\not\subset \I^{W,*}(M^{\Phi_F})$ for any acting
$*$-homomorphisms $\Phi_D$ and $\Phi_F$ of $W$ on $M$ with
$\Phi_D(W)\cong D$ and $\Phi_F(W)\cong F$, and consequentially $M^F\notin \V^{W,*}(M^D).$

Now, given an acting $*$-homomorphism $\Phi_F$  of $W$ on $M$ such that $\Phi_F(W)\cong F$,
 by Theorem \ref{thm: id e cn M^F} for any element $w$ of $W$ linearly independent with $1_W$  we have that $wx^+\in \I^{W,*}(M^{\Phi_F})$.
 For any acting $*$-homomorphism $\Phi_D$  of $W$ on $M$ such that $\Phi_D(W)\cong D$, there exists a symmetric element  $w_1^+$ of $W$ linearly independent with $1_W$ such that $\Phi_D(w_1^+)=(R_{e_{11}+e_{44}-e_{22}-e_{33}},L_{e_{11}+e_{44}-e_{22}-e_{33}})\in \M(M)$. As a consequence, $w_1^+x^+\notin \I^{W,*}(M^{\Phi_D})$ since $w_1^+1_M=e_{11}+e_{44}-e_{22}-e_{33}\neq 0 $. Thus, $\I^{W,*}(M^{\Phi_D})\not\subset \I^{W,*}(M^{\Phi_F})$ for any acting
$*$-homomorphisms $\Phi_D$ and $\Phi_F$ of $W$ on $M$ with
$\Phi_D(W)\cong D$ and $\Phi_F(W)\cong F$, and as a consequence $M^D\notin \V^{W,*}(M^F).$
\end{proof}

\section{Generalized $*$-cocharacter of $M^D$}

In this section, we compute the $(r,n-r)$th $(W,*)$-cocharacter of $M^D$, which will play a central role in the next section in studying the growth of the $(W,*)$-variety generated by $M^D$. More precisely, for $n \geq 1$ and $0 \leq r \leq n$, we determine the multiplicities $m_{\lambda,\mu}$ in the decomposition of the $(S_r \times S_{n-r})$-character
\begin{equation}\label{eq: ch M^D}
        \chi_{r,n-r}^{W,*}(M^D)=\sum_{(\lambda,\mu)\vdash (r,n-r)}m_{\lambda,\mu}(\chi_\lambda \otimes \chi_\mu)
\end{equation}
of $P_{r,n-r}(M^D)$ (see \eqref{eq: ch}). 
To this end recall that any irreducible left $(S_r\times S_{n-r})$-module $W_{\lambda,\mu}\subseteq P_{r,n-r}$ with character $\chi_\lambda \otimes \chi_\mu$ can be generated as an $(S_r\times S_{n-r})$-module by an element of the form $e_{T_\lambda}e_{T_\mu}f$, for some $f\in W_{\lambda,\mu}$ and
some pair of Young tableaux $(T_\lambda,T_\mu)$ of shape $\lambda\vdash r$ and $\mu\vdash n-r$, respectively. Here $e_{T_{\nu}}=\sum_{\substack{\sigma\in R_{T_{\nu}} \\ \tau\in C_{T_{\nu}}}} (\sgn\tau)\sigma\tau$  stands for the quasi-idempotent corresponding to
some Young tableau $T_\nu$ of shape $\nu \vdash t$, where $R_{T_{\nu}}$ and $C_{T_{\nu}}$ are the subgroups of $S_t$ stabilizing the rows and columns of $T_{\nu}$, respectively.
Furthermore, recall that 
\begin{equation}\label{eq: codim and ch M^D}
    c_{r,n-r}^{W,*}(M^D)=\sum_{(\lambda, \mu)\vdash (r,n-r)}m_{\lambda, \mu}d_{\lambda}d_{\mu}
\end{equation}
where $d_{\lambda}=\chi_{\lambda}(1)$ and $d_{\mu}=\chi_{\mu}(1)$ are the degrees of $\chi_\lambda$ and $\chi_\mu$, i.e., the number of standard tableaux of shape $\lambda$ and $\mu,$ respectively.

We start by proving the following lemmas.

 \begin{lemma}\label{lemma: cn-1,1}
 If $r=n-1$, then in \eqref{eq: ch M^D} we have 
$$
m_{\lambda,\mu}= \begin{cases}
             2(q+1)  & \text{if } \lambda=(p+q,p),  \mu=(1)\\
              2n & \text{if }  \lambda=(n-1),  \mu=(1)\\
            0 & \text{otherwise}
    \end{cases}.
$$
\end{lemma}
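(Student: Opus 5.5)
The plan is to obtain lower bounds for the multiplicities by exhibiting enough linearly independent highest weight vectors. We then close the argument by a dimension count using \eqref{eq: c_{n-1,1}(M^D)}, that is $c_{n-1,1}(M^D)=2^n$, together with \eqref{eq: codim and ch M^D}.

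Since $n-r=1$, necessarily $\mu=(1)$ and $\chi_\mu$ is trivial, so only $\lambda\vdash n-1$ needs to be determined. Note that the case $\lambda=(n-1)$ is the case $p=0$, $q=n-1$ of the first line, so it suffices to prove $m_{(p+q,p),(1)}=2(q+1)$ for all $p\ge 0$, $q\ge 0$ with $2p+q=n-1$, and $m_{\lambda,(1)}=0$ for every other $\lambda$.

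First, the lower bound. Fix $\lambda=(p+q,p)$, and for each $0\le k\le q$ consider the $(W,*)$-polynomial
$$
g_{p,k}=(x_1^+)^{k}\sum_{\sigma\in S_p}\ \prod_{i=1}^{p}\ \sum_{\tau\in S_2}(\sgn\tau)\, \cdots
$$
More concretely, $g_{p,k}$ is built from $k$ copies of $x_1^+$ to the left of $x^-$, $q-k$ copies of $x_1^+$ to the right, and $p$ alternating pairs $(x_1^+,x_2^+)$, each pair split with one variable on each side of $x^-$. In short, it is the element $x_1^{k}\,\bar x_1\cdots\bar x_1\, x^-\,\tilde x_2\cdots \tilde x_2\, x_1^{q-k}$, alternated on the $p$ pairs $\{\bar x_1,\tilde x_2\}$; this is the analogue of the highest weight vectors used for $M$ in \cite{MisVal}. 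By the reduction in the proof of Theorem \ref{thm: id e cn M^D}, the variables on each side of $x^-$ commute modulo $I$, so only the split matters. We also take $g_{p,k}w_1^+$.

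Each of these $2(q+1)$ polynomials is obtained by linearizing $e_{T_\lambda}$ applied to a monomial for a suitable tableau $T_\lambda$. So it suffices to prove that they are linearly independent modulo $\I^{W,*}(M^D)$. Suppose $\sum_k(a_k g_{p,k}+b_k g_{p,k}w_1^+)\equiv 0$. We evaluate as in the proof of Theorem \ref{thm: id e cn M^D}: $x^-=e_{12}-e_{34}$, $x_2^+=e_{22}+e_{33}$ and $x_1^+=\alpha(e_{11}+e_{44})+\beta(e_{22}+e_{33})$. Only the split $x_1$-left/$x_2$-right (respectively $x_2$-left/$x_1$-right) in each alternated pair survives in the $e_{12}$ (respectively $e_{34}$) component. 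The $e_{12}$-coefficient then becomes a polynomial in $\alpha,\beta$ whose monomials $\alpha^{k+p}\beta^{q-k}$ separate the indices $k$. Since $w_1^+$ acts on the right of $e_{12}$ by $-1$ and on the right of $e_{34}$ by $+1$, the two components yield the combinations $a_k-b_k$ and $a_k+b_k$, which forces $a_k=b_k=0$.

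Second, the upper bound and the vanishing of the remaining multiplicities. The lower bound gives
$$
2^n=c_{n-1,1}(M^D)=\sum_{\lambda}m_{\lambda,(1)}d_\lambda\ \ge\ 2\sum_{2p+q=n-1}(q+1)\,d_{(p+q,p)}.
$$
By the hook formula, or equivalently by the known identity from the ordinary case \cite{MisVal} where $c_{n-1,1}(M)=2^{n-1}$ with multiplicities $q+1$, the right-hand sum equals $2\cdot 2^{n-1}=2^n$. Hence equality holds throughout, every multiplicity equals its lower bound, and $m_{\lambda,(1)}=0$ for all other $\lambda$, in particular for all $\lambda$ with at least three rows.

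I expect the main obstacle to be the independence step: writing the highest weight vectors carefully so that the chosen evaluations genuinely separate all $2(q+1)$ of them, including the sign bookkeeping for $w_1^+$ on the $e_{34}$ component and for the alternations in the $e_{34}$ part.
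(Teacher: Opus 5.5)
Your proposal is correct and follows essentially the same route as the paper. You use the same highest weight vectors: your $g_{p,k}$ and $g_{p,k}w_1^+$ are the paper's $a^{(i)}_{p,q}$ and $a^{(i)}_{p,q,w_1^+}$, and the case $p=0$ gives $b^{(i)}$ and $b^{(i)}_{w_1^+}$. You separate them by an evaluation with $x^-=e_{12}-e_{34}$ and close with the count $c_{n-1,1}(M^D)=2^n$. The differences are cosmetic: you absorb $\lambda=(n-1)$ as $p=0$, use a two-parameter evaluation separating monomials in place of the paper's one-parameter Vandermonde argument, and justify $\sum_{2p+q=n-1}(q+1)d_{(p+q,p)}=2^{n-1}$ via the hook formula and the ordinary case of \cite{MisVal} rather than formula (4.8) of \cite{MarRiz2}.
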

\begin{proof}
Notice that, as a consequence of formula \eqref{eq: codim and ch M^D}, it suffices to prove that:
\begin{enumerate}
    \item $m_{(p+q,p),(1)}\geq 2(q+1)$ for all $p \geq 1$ and $q \geq 0$ such that $2p+q=n-1$;
    \vspace{1mm}
    \item $m_{(n-1),(1)}\geq 2n$.
\end{enumerate}
Indeed,
assume that $m_{(n-1),(1)}\geq 2n$ and $m_{(p+q,p),(1)}\geq 2(q+1)$ for all $p \geq 1$ and $q \geq 0$ such that $2p+q=n-1$. Then
\begin{equation}\label{eq: c_{n-1,1}(M^D) diseq}
    c_{n-1,1}(M^D)\geq\sum_{\substack{1\leq p \leq \lfloor \frac{n-1}{2}\rfloor\\0 \leq q \leq n-2p-1}} 2(q+1) d_{(p+q,p)} d_{(1)}+2n d_{(n-1)}d_{(1)}.
\end{equation}
Now, by the formula $(4.8)$ proved in Theorem 4.4 of \cite{MarRiz2}, we have
\begin{equation*}\label{eq: d_(p+q,p)}
    \sum_{\substack{1\leq p \leq \lfloor \frac{n-1}{2}\rfloor\\0 \leq q \leq n-2p-1}} 2(q+1) d_{(p+q,p)}=2  (2^{n-1}-n)=2^n-2n.
\end{equation*}
Hence, recalling that $d_{(n-1)}=d_{(1)}=1$, we get
$$\sum_{\substack{1\leq p \leq \lfloor \frac{n-1}{2}\rfloor\\0 \leq q \leq n-2p-1}} 2(q+1) d_{(p+q,p)} d_{(1)}+2n  d_{(n-1)}d_{(1)}=2^n-2n+2n=2^n.$$
Since by \eqref{eq: c_{n-1,1}(M^D)} $c_{n-1,1}(M^D)=2^n$, it follows that \eqref{eq: c_{n-1,1}(M^D) diseq} is actually an equality. Thus the lemma follows once $(1)$ and $(2)$ are proved. 

\vspace{1ex}
We compute the two different cases separately.

       \vspace{1ex} 
\noindent\textbf{Case 1:} We prove that $m_{(p+q,p),(1)}\geq 2(q+1)$ for all $p \geq 1$, $q \geq 0$ such that $2p+q=n-1$.
		
		For $0\leq i \leq q$, we consider the following two tableaux:
         $$
			T_{(p+q,p)}^{(i)}=	\begin{array}{|c|c|c|c|c|c|c|c|c|c|}\hline
				i+1 & i+2 & \dots & i+p & 1 & \dots & i & i+2p+2 & \dots & n\\ \hline
				i+p+2 & i+p+3 & \dots & i+2p+1\\ \cline{1-4}
			\end{array}
        $$
     $$   T_{(1)}^{(i)}=\begin{tabular}{|c|}
\hline
$i+p+1$ \\
\hline
\end{tabular}$$
and the associated polynomials
		\begin{align}
        & a_{p,q}^{(i)}(x_1^+,x_2^+,x^-)=(x_1^+)^i\underbrace{\overline{x_1^+}\cdots \widetilde{x_1^+}}_{p}x^-\underbrace{\overline{x_2^+}\cdots \widetilde{x_2^+}}_{p}(x_1^+)^{q-i},\label{eq: a}\\
        & a_{p,q, w_1^+}^{(i)}(x_1^+,x_2^+,x^-)=(x_1^+)^i\underbrace{\overline{x_1^+}\cdots \widetilde{x_1^+}}_{p}x^-\underbrace{\overline{x_2^+}\cdots \widetilde{x_2^+}}_{p}(x_1^+)^{q-i}w_1^+,\label{eq: aw_1}
		\end{align}
		where the symbol $\tilde{\phantom{a}}$ or $\overline{\phantom{a}}$ means alternation on the corresponding variables. These generalized $*$-polynomials are obtained 
        from the quasi-idempotents corresponding to the pair of tableaux $\Big(T_{(p+q,p)}^{(i)},T_{(1)}^{(i)} \Big)$ by identifying all the elements in each row of $(p+q,p)$.

It is easily checked that $ a_{p,q}^{(i)}(x_1^+,x_2^+,x^-)$ and  $ a_{p,q,w_1^+}^{(i)}(x_1^+,x_2^+,x^-)$, for all $0\leq i \leq q$, are not $(W,*)$-identities of $M^D$. We will prove that they are linearly independent modulo $\I^{W,*}(M^D)$. 
		
Suppose that
        $$
        \sum_{i=0}^q \alpha_i a_{p,q}^{(i)}(x_1^+,x_2^+,x^-)+\sum_{i=0}^q \beta_i  a_{p,q, w_1^+}^{(i)}(x_1^+,x_2^+,x^-)\equiv 0 \pmod{ \I^{W,*}(M^D)}.
        $$
        
        If we evaluate $x_1^+=k(e_{11}+e_{44})+e_{22}+e_{33}$,  $x_2^+=e_{11}+e_{44}$ and $x^-=e_{12}-e_{34}$, with $k\in F$, $k \neq 0$, we get
$$
  (-1)^p \left(\sum_{i=0}^q (\alpha_i-\beta_i) k^i \right) e_{12}
        - \left( \sum_{i=0}^q (\alpha_i + \beta_i) k^{q-i} \right) e_{34} =0.
$$
Hence, setting $t=q-i$, it follows that
$$
\sum_{i=0}^q (\alpha_i-\beta_i) k^i=0, \quad \quad  \sum_{t=0}^q (\alpha_{q-t} + \beta_{q-t}) k^{t}=0.
$$
 Now, for $0\leq i \leq q$, let $\gamma_i^{(1)}:=\alpha_i-\beta_i$ and $\gamma_i^{(2)}:=\alpha_{q-i}+\beta_{q-i}$. Given $j\in \{1,2\}$, assume that there exists $\gamma_i^{(j)}\neq 0$ for some $0\leq i \leq q $.  Since $F$ is an infinite field, we can choose $k_0,\dots,k_{q}\in F$ such that $k_i\neq 0$ and $k_i\neq k_j$ for $0\leq i \neq j \leq q$. Then, we get the following homogeneous linear system of $q+1$ equations in the $q+1$ variables $\gamma_0^{(j)}, \ldots, \gamma_q^{(j)}$
     $$
        \sum_{i=0}^{q}k_s^i \gamma_i^{(j)}=0 \quad \quad 0 \leq s \leq q.
    $$
Since the matrix associated to the above system is a Vandermonde matrix, it follows that $\gamma_i^{(j)}=0$ for any $0 \leq i \leq q$, a contradiction. Hence, for all $0\leq i \leq q$,  $\alpha_i-\beta_i=\alpha_i+\beta_i=0$ and so $\alpha_i=\beta_i=0$.

Thus the $2(q+1)$ generalized $*$-polynomials $a_{p,q}^{(i)}(x_1^+,x_2^+,x^-)$, $ a_{p,q, w_1^+}^{(i)}(x_1^+,x_2^+,x^-)$, $0\leq i \leq q$, are linearly independent modulo $\I^{W,*}(M^D)$.

		Notice that for every $0\leq i \leq q$, the complete linearization of $a_{p,q}^{(i)}(x_1^+,x_2^+,x^-)$ and $ a_{p,q, w_1^+}^{(i)}(x_1^+,x_2^+,x^-)$ are, respectively, the $(W,*)$-polynomials
        $$e_{T_{(p+q,p)}^{(i)}}e_{T_{(1)}^{(i)}}(x_1^+ \cdots x_{i+p}^+ x^-_{i+p+1} x_{i+p+2}^+ \cdots x_{n}^+ ) \quad \text{and} \quad e_{T_{(p+q,p)}^{(i)}}e_{T_{(1)}^{(i)}}(x_1^+ \cdots x_{i+p}^+ x^-_{i+p+1} x_{i+p+2}^+ \cdots x_{n}^+ w_1^+),$$ 
        which are consequently linearly independent modulo $\Id^{W,*}(M^D)$.
      Hence,  it follows that the multilinear $(W,*)$-polynomials $$e_{(p+q,p),(1)}^{(i)}(x_1^+,\ldots,x_{n-1}^+,x^-_n)\quad \text{and} \quad  e_{(p+q,p),(1)}^{(i),w_1^+}(x_1^+,\ldots,x_{n-1}^+,x^-_n),$$ 
      obtained, respectively, from the expressions above by renaming the variables, are also linearly independent modulo $\Id^{W,*}(M^D)$, for $0\leq i \leq q$, and they generate $2(q+1)$ distinct copies of the same irreducible module associated to the pair of partitions $\big((p+q,p),(1)\big)$ in the module decomposition of $P_{n-1,1}(M^D)$. So, we have that $m_{(p+q,p),(1)}\geq 2(q+1)$, as claimed.

\vspace{1ex}
        
\noindent\textbf{Case 2:}  We prove that $m_{(n-1),(1)}\geq 2n$.

For $0\leq i\leq n-1$ given the tableaux
$$
T_{(n-1)}^{(i)}=\begin{array}{|c|c|c|c|c|c|}\hline
1   & \dots & i & i+2 & \dots & n\\  
\hline
\end{array}
$$
and
$$
T_{(1)}^{(i)}=\begin{array}{|c|}\hline
i+1\\  
\hline
\end{array}\,,
$$
the associated polynomials are respectively
\begin{align}
    b^{(i)}(x^+,x^-)&=(x^+)^ix^-(x^+)^{n-i-1} ,\label{eq: b_i} \\
      b_{w_1^+}^{(i)}(x^+,x^-)&=(x^+)^ix^-(x^+)^{n-i-1}w_1^+.\label{eq: b_{w_1i}}
\end{align}
These polynomials are obtained from the quasi-idempotents corresponding to the pair of tableaux $\left(T_{(n-1)}, T_{(1)}\right)$ by identifying all the elements in $T_{(n-1)}$.

 We will prove that the $2n$ $(W,*)$-polynomials $b^{(i)}(x^+,x^-)$, $ b_{w_1^+}^{(i)}(x^+,x^-)$, $0\leq i \leq n-1$, which are not $(W,*)$-identities of $M^D$, are linearly independent modulo $\Id^{W,*}(M^D)$. 
 
 Let
$$
 \sum_{0 \leq i \leq n-1} \alpha_i b^{(i)}(x^+,x^-)+ \sum_{0 \leq i \leq n-1} \beta_i b_{w_1^+}^{(i)} (x^+,x^-)\equiv 0 \pmod{ \I^{W,*}(M^D)}.
$$

By  evaluating  $x^+=k(e_{11}+e_{44})+(e_{22}+e_{33})$, $x^-=e_{12}-e_{34}$, where $k \in F$, $k\neq 0$ we get 
$$
     \left(\sum_{i=0}^{n-1}(\alpha_i-\beta_i ) k^i\right)e_{12}-\left(\sum_{i=0}^{n-1}(\alpha_i+\beta_i)k^{n-i-1}\right)e_{34}=0.
      $$
Hence, setting $t=n-i-1$, it follows
$$
\sum_{i=0}^{n-1}(\alpha_i-\beta_i ) k^i=0, \quad \quad \sum_{t=0}^{n-1}(\alpha_{n-1-t}+\beta_{n-1-t})k^{t}=0.
$$
Now, from the same argument of the previous case, we obtain $\alpha_i=\beta_i=0$ for all $0\leq i \leq n-1$. Consequently, the $(W,*)$-polynomials $b^{(i)}(x^+,x^-)$ and $b_{w_1^+}^{(i)}(x^+,x^-)$, for $0\leq i \leq n-1$, are linearly independent modulo $\I^{W,*}(M^D)$. Applying a similar reasoning as in Case 1, this implies that $m_{(n-1),(1)} \ge 2n$, as claimed.
\end{proof}

\begin{lemma}\label{lemma: cn,0}
 If $r=n$, then in \eqref{eq: ch M^D} we have  
$$
m_{\lambda,\mu}= \begin{cases}
             4(q+1) & \text{if }\lambda=(p+q,p), \mu=\emptyset\\
             2(q+1) & \text{if } \lambda=(p+q,p,1), \mu=\emptyset\\
              2n+2 & \text{if } \lambda=(n),  \mu=\emptyset\\
              0 & \text{otherwise}\\
    \end{cases}.
$$   
\end{lemma}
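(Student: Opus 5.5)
We follow the strategy of Lemma~\ref{lemma: cn-1,1}. First we show that the multiplicities in the statement are lower bounds. Then we show that the resulting lower bound for $c_{n,0}(M^D)$, obtained from \eqref{eq: codim and ch M^D}, already equals the exact value $c_{n,0}(M^D)=n2^n+2$ from \eqref{eq: c_{n,0}(M^D)}. This forces all inequalities to be equalities and all other multiplicities to vanish. So the proof splits into two parts: a counting identity, and three families of highest weight vectors.

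\emph{Counting.} We will show
$$4\sum_{\substack{p\ge1,\,q\ge0\\2p+q=n}}(q+1)d_{(p+q,p)}+2\sum_{\substack{p\ge1,\,q\ge0\\2p+q=n-1}}(q+1)d_{(p+q,p,1)}+(2n+2)=n2^n+2.$$
By formula $(4.8)$ of \cite{MarRiz2}, used above with $n-1$ replaced by $n$, the first sum equals $4(2^n-n-1)$. It therefore remains to check that
$$\sum_{2p+q=n-1}2(q+1)d_{(p+q,p,1)}=(n-4)2^n+2n+4.$$
We do this with the hook length formula. One route is to express $d_{(p+q,p,1)}$ through the Pieri/branching rule from the two-row degrees. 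Alternatively, if \cite{MarRiz2} contains the corresponding three-row identity, we quote it. Sanity checks for small $n$, for instance $n=3$ where the identity reads $2=2$, guide the computation.

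\emph{Lower bounds.} We exhibit highest weight $(W,*)$-polynomials, built from the basis \eqref{eq: generators P_n,0} of $P_{n,0}(M^D)$, with an explicit count for each shape.

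For $\lambda=(n)$ we take $2n+2$ polynomials:
\begin{itemize}
\item $(x^+)^n$ and $(x^+)^nw_1^+$;
\item $(x^+)^i[w_1^+,x^+](x^+)^{n-i-1}$ and $(x^+)^i[w_1^+,x^+](x^+)^{n-i-1}w_1^+$, for $0\le i\le n-1$.
\end{itemize}

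For $\lambda=(p+q,p)$ we take $4(q+1)$ polynomials, obtained by modifying the polynomials $a^{(i)}_{p,q}$ of \eqref{eq: a}:
\begin{itemize}
\item in one family the central skew variable is replaced by a commutator $[\widetilde{x_1^+},\widetilde{x_2^+}]$ absorbing one alternating pair;
\item in the other family the skew variable is replaced by $[w_1^+,x_1^+]$, or by $[w_1^+,\cdot]$ inserted into an alternating pair;
\item each of these is taken with and without a final $w_1^+$.
\end{itemize}

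For $\lambda=(p+q,p,1)$ we take $2(q+1)$ polynomials, with and without a final $w_1^+$. Here the first column of the tableau has length $3$, so one alternation runs over $x_1^+,x_2^+,x_3^+$. We realise it through the skew element $[\overline{x_3^+},\overline{x_1^+}]$ followed by an alternation in $x_1^+,x_2^+$. Such a polynomial survives on $M^D$ because $M^D$ satisfies no standard polynomial of degree $3$ on symmetric elements modulo \eqref{eq: generators P_n,0}: the Jacobi-reduced commutators $[x_k^+,x_{j_1}^+,\dots]$ with $k>j_1$ carry exactly this shape.

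To prove linear independence modulo $\I^{W,*}(M^D)$ we evaluate each family as in Lemma~\ref{lemma: cn-1,1}. We substitute $x_1^+=k(e_{11}+e_{44})+e_{22}+e_{33}$, $x_2^+=e_{12}+e_{34}$ (or $e_{11}+e_{44}$), and take $x_3^+=1_M$ where a third variable is needed. The coefficients of $e_{12}$ and $e_{34}$ then become polynomials in $k$. Independence of these coefficient polynomials follows from the Vandermonde argument, and the combinations $\alpha\pm\beta$ separate the versions with and without $w_1^+$. Two structural facts help: $w_1^+$ acts as $\pm1$ on $e_{11}+e_{44}$ and on $e_{22}+e_{33}$, and $[w_1^+,e_{12}+e_{34}]=2(e_{12}-e_{34})$. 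Because of the latter, the $[w_1^+,\cdot]$-families land on different matrix units than the $[x^+,x^+]$-families, which is what makes the count $4(q+1)$ rather than $2(q+1)$ possible.

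\emph{Main obstacle.} The hardest step is the two-row case. There we must choose the $4(q+1)$ polynomials carefully so that the commutator families and the $[w_1^+,\cdot]$ families are genuinely independent. This may require using two different evaluations, one placing $e_{12}+e_{34}$ inside the commutator and one inside $[w_1^+,\cdot]$, and comparing the resulting Vandermonde systems. If the direct construction turns out to be awkward, we fall back on the trace-of-degrees argument: proving only the easy bounds for $(n)$ and $(p+q,p,1)$ and letting the exact count pin down the two-row multiplicity does not work, since all bounds are needed for the equality. Instead we would bound multiplicities from above via the explicit spanning set \eqref{eq: generators P_n,0}, by counting the standard-tableau images of those spanning polynomials.
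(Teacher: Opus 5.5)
Your overall strategy is the paper's: prove $m_{(p+q,p),\emptyset}\ge 4(q+1)$, $m_{(p+q,p,1),\emptyset}\ge 2(q+1)$ and $m_{(n),\emptyset}\ge 2n+2$ with explicit highest weight vectors, then let $c_{n,0}(M^D)=n2^n+2$ and formulas (4.8), (4.9) of \cite{MarRiz2} force equality. The three-row identity you leave open is exactly (4.9), and it is correct. Your $(n)$ family is the paper's. Setting $x_3^+=1_M$ in the three-row vectors is a valid shortcut: the triple alternation becomes $[x_1^+,x_2^+]$, so $h^{(i)}_{p,q}$ turns into the corresponding commutator vector. The gap is the linear independence, which is the real content of the lemma, above all for $\lambda=(p+q,p)$. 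Three things fail.

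\begin{itemize}
\item[(i)] Your substitutions give zero. With $x_1^+=k(e_{11}+e_{44})+e_{22}+e_{33}$ diagonal:
  \begin{itemize}
  \item $x_2^+=e_{12}+e_{34}$ kills every two- and three-row vector once $p\ge 2$, since each monomial has at least two radical factors and $J(M)\,M\,J(M)=0$;
  \item $x_2^+=e_{11}+e_{44}$ makes everything lie in the commutative diagonal subalgebra, on which $w_1^+$ acts by $e_{11}+e_{44}-e_{22}-e_{33}$, so all commutators vanish;
  \item in the $(n)$ case, $[w_1^+,x^+]=0$ for diagonal $x^+$.
  \end{itemize}
\item[(ii)] Your reason why $4(q+1)$ rather than $2(q+1)$ is attainable is false. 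Every nonzero value lies in $\spa\{e_{12},e_{34}\}$, and $2(e_{12}-e_{34})$ sits on the same matrix units as the commutator values.
\item[(iii)] The second family is not pinned down. Replacing $x^-$ in $a^{(i)}_{p,q-1}$ by $[w_1^+,x_1^+]$ gives only $q$ vectors of shape $(p+q,p)$, none for $(p,p)$, so at most $2q$ with the $w_1^+$-variants. Your other reading, $\overline{x_1^+}\,[w_1^+,\overline{x_2^+}]$, is usable: it differs from the middle factor $x_1^+w_1^+x_2^+-x_2^+w_1^+x_1^+$ of the paper's $g^{(i)}_{p,q}$ by $[x_1^+,x_2^+]w_1^+$.
\end{itemize}

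The missing idea is to put the radical summand into the high-degree variable and to use two mirror evaluations:
\begin{itemize}
\item $x_1^+=k(e_{11}+e_{44})+e_{22}+e_{33}+e_{12}+e_{34}$ with $x_2^+=e_{11}+e_{44}$;
\item $x_1^+=e_{11}+e_{44}+k(e_{22}+e_{33})+e_{12}+e_{34}$ with $x_2^+=e_{22}+e_{33}$.
\end{itemize}
The middle $w_1^+$ of $g^{(i)}_{p,q}$ is always adjacent to $x_2^+$. So it acts as $+1$ in the first evaluation and as $-1$ in the second, which means $g^{(i)}_{p,q}$ evaluates like $f^{(i)}_{p,q}$ in the first and like $-f^{(i)}_{p,q}$ in the second. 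A final $w_1^+$ negates the $e_{12}$-component and fixes the $e_{34}$-component. Reading off the $e_{12}$- and $e_{34}$-coefficients of both evaluations gives four Vandermonde systems in four linearly independent signed combinations of $\alpha_i,\beta_i,\gamma_i,\delta_i$, which yields $4(q+1)$.

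The same kind of $x_1^+$ handles $\lambda=(n)$, after a first evaluation at $1_M$ to dispose of $(x^+)^n$ and $(x^+)^nw_1^+$. Your fallback of bounding multiplicities from above via the spanning set \eqref{eq: generators P_n,0} is not a worked-out argument, and it is unnecessary once this is in place.
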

\begin{proof}
We proceed as in the proof of the previous lemma. Observe that, in this case, as a consequence of formula \eqref{eq: codim and ch M^D}, it is enough to prove that:
\begin{enumerate}
\item $m_{(p+q,p),\emptyset}\geq 4(q+1)$ for all $p \geq 1$ and $q \geq 0$ such that $2p+q=n$;
\vspace{1mm}
\item $m_{(p+q,p,1),\emptyset}\geq 2(q+1)$ for all $p \geq 1$ and $q \geq 0$ such that $2p+q=n-1$;
\vspace{1mm}
\item $m_{(n),\emptyset}\geq 2n+2$.
\end{enumerate}
Indeed, if these conditions hold, then  
\begin{equation}\label{eq: c_{n,0}(M^D) diseq}
		  c_{n,0}(M^D) 
          \geq\sum_{\substack{1\leq p \leq \lfloor \frac{n}{2}\rfloor\\0 \leq q \leq n-2p}}4(q+1)d_{(p+q,p)}d_{\emptyset}+\sum_{\substack{1\leq p \leq \lfloor \frac{n-1}{2}\rfloor\\0 \leq q \leq n-2p-1}}2(q+1)d_{(p+q,p,1)}d_{\emptyset}+(2n+2) d_{(n)}d_{\emptyset}. 
	\end{equation}
Notice that by the formulas $(4.8)$ and $(4.9)$ proved in Theorem 4.4 of \cite{MarRiz2}, we have
$$
  \sum_{ \substack{ 1 \leq p \leq \lfloor\frac{n}{2}\rfloor \\ 0 \leq q \leq n-2p}}  4(q+1) d_{(p+q,p)}=2^{n+2}-4n-4.
$$
and
$$
\sum_{ \substack{ 1 \leq p \leq \lfloor\frac{n-1}{2}\rfloor \\ 0 \leq q \leq n-2p-1}}  2(q+1) d_{(p+q,p,1)} =  (n-4)2^{n}+2n+4.
$$
Hence, recalling that $d_{(n)}=d_{\emptyset}=1$, we obtain
\begin{equation*}
    \sum_{\substack{1\leq p \leq \lfloor \frac{n}{2}\rfloor\\0 \leq q \leq n-2p}}4(q+1)d_{(p+q,p)}d_{\emptyset}+\sum_{\substack{1\leq p \leq \lfloor \frac{n-1}{2}\rfloor\\0 \leq q \leq n-2p-1}}2(q+1)d_{(p+q,p,1)}d_{\emptyset}+(2n+2) d_{(n)}d_{\emptyset}
    =n 2^n +2.
\end{equation*}
Since, by \eqref{eq: c_{n,0}(M^D)}, we have $c_{n,0}(M^D)=n2^n+2$, it follows that \eqref{eq: c_{n,0}(M^D) diseq} is in fact an equality. Thus it remains to prove the three inequalities above.

\vspace{1ex}
We now compute the three cases separately.

\vspace{1ex}

     \noindent\textbf{Case 1:} We prove that $m_{(p+q,p), \emptyset}\geq 4(q+1)$ for all $p \geq 1$ and $q \geq 0$ such that $2p+q=n$.

		For $0\leq i \leq q$, we consider the tableau
		\begin{equation*}
			T_{(p+q,p)}^{(i)}=\begin{array}{|c|c|c|c|c|c|c|c|c|c|c|}
				\hline
				i+1 & i+2 & \dots & i+p-1 & i+p & 1 & \dots & i & i+2p+1 & \dots & n\\
				\hline
				i+p+2 & i+p+3 & \dots & i+2p & i+p+1\\   
				\cline{1-5}
			\end{array}
		\end{equation*}
		and its associated $(W,*)$-polynomials
        
    	\begin{align}
	    f_{p,q}^{(i)}(x_1^+,x_2^+)&=(x_1^+)^i\underbrace{\overline{x_1^+}\cdots\widetilde{x_1^+}}_{p-1}[x_1^+,x_2^+]\underbrace{\overline{x_2^+}\cdots \widetilde{x_2^+}}_{p-1}(x_1^+)^{q-i},\label{eq: f}\\
		f_{p,q,w_1^+}^{(i)}(x_1^+,x_2^+)&=(x_1^+)^i\underbrace{\overline{x_1^+}\cdots\widetilde{x_1^+}}_{p-1}[x_1^+,x_2^+]\underbrace{\overline{x_2^+}\cdots \widetilde{x_2^+}}_{p-1}(x_1^+)^{q-i}w_1^+,\label{eq: fw_1}\\
		g_{p,q}^{(i)}(x_1^+,x_2^+)&=(x_1^+)^i\underbrace{\overline{x_1^+}\cdots\widetilde{x_1^+}}_{p-1}(x_1^+w_1^+ x_2^+-x_2^+w_1^+x_1^+)\underbrace{\overline{x_2^+}\cdots \widetilde{x_2^+}}_{p-1}(x_1^+)^{q-i},\label{eq: gi}\\
		g_{p,q,w_1^+}^{(i)}(x_1^+,x_2^+)&=(x_1^+)^i\underbrace{\overline{x_1^+}\cdots\widetilde{x_1^+}}_{p-1}(x_1^+w_1^+ x_2^+-x_2^+w_1^+x_1^+)\underbrace{\overline{x_2^+}\cdots \widetilde{x_2^+}}_{p-1}(x_1^+)^{q-i}w_1^+,\label{eq: gw_1}
	\end{align}
		where the symbol $\tilde{\phantom{a}}$ or $\overline{\phantom{a}}$ means alternation on the corresponding variables.  For any $0\leq i \leq q$, these $(W,*)$-polynomials are obtained 
        from the quasi-idempotents corresponding to the tableau $T_{(p+q,p)}^{(i)}$ by identifying all the elements in each row. 
        
        Clearly, the $4(q+1)$ polynomials $f_{p,q}^{(i)}(x_1^+,x_2^+)$, $f_{p,q,w_1^+}^{(i)}(x_1^+,x_2^+)$, $g_{p,q}^{(i)}(x_1^+,x_2^+)$, $g_{p,q,w_1^+}^{(i)}(x_1^+,x_2^+)$, for all $0\leq i \leq q$, do not belong to $\I^{W,*}(M^D)$. 
        Next, we will prove that they are linearly independent modulo $\I^{W,*}(M^D)$.
		
		Suppose that
        $$
        \sum_{i=0}^q \alpha_i f_{p,q}^{(i)}(x_1^+,x_2^+)+\sum_{i=0}^q\beta_i f_{p,q,w_1^+}^{(i)}(x_1^+,x_2^+)+ \sum_{i=0}^q\gamma_i g_{p,q}^{(i)}(x_1^+,x_2^+)+ \sum_{i=0}^q\delta_i g_{p,q,w_1^+}^{(i)}(x_1^+,x_2^+)\equiv 0 \pmod{\I^{W,*}(M^D)}.
        $$
        
        If we evaluate $x_1^+=k(e_{11}+e_{44})+e_{22}+e_{33}+e_{12}+e_{34}$, with $k \in F$, $k \neq 0$, and $x_2^+=e_{11}+e_{44}$, we get 
         \begin{equation}\label{eq: eval ch p+q,p 1}
         (-1)^{p-1} \left(\sum_{i=0}^q (-\alpha_i  + \beta_i   -\gamma_i  +\delta_i) k^i\right) e_{12}+ \left( \sum_{i=0}^q (\alpha_i+\beta_i+\gamma_i+ \delta_i)k^{q-i}\right)e_{34}=0.
        \end{equation}
        Also, by evaluating $x_1^+=e_{11}+e_{44}+k(e_{22}+e_{33})+e_{12}+e_{34}$, with $k \in F$, $k \neq 0$, and $x_2^+=e_{22}+e_{33}$, we obtain 
         \begin{equation}\label{eq: eval ch p+q,p 2}
         \left(\sum_{i=0}^q (\alpha_i-\beta_i-\gamma_i+\delta_i)k^{q-i}\right) e_{12} +(-1)^{p-1}\left( \sum_{i=0}^q(-\alpha_i-\beta_i+\gamma_i+\delta_i)k^{i}\right) e_{34}=0.
        \end{equation}
        Hence, setting $t=q-i$ and from \eqref{eq: eval ch p+q,p 1}, \eqref{eq: eval ch p+q,p 2} it follows that
        \begin{align*}
        &\sum_{i=0}^q k^i(-\alpha_i  + \beta_i   -\gamma_i  +\delta_i) =0, \quad \quad   \sum_{t=0}^q k^{t}(\alpha_{q-t}+\beta_{q-t}+\gamma_{q-t}+ \delta_{q-t})=0,\\
          & \sum_{t=0}^q k^{t} (\alpha_{q-t}-\beta_{q-t}-\gamma_{q-t}+\delta_{q-t})=0,  \quad \quad \sum_{i=0}^q k^{i}(-\alpha_i-\beta_i+\gamma_i+\delta_i)=0.
        \end{align*}

        Now, for $0\leq i \leq q$, let 
        \begin{equation}\label{eq: eta}
        \begin{split}
             &\eta_i^{(1)}:=-\alpha_i  + \beta_i   -\gamma_i  +\delta_i, \quad \quad  \eta_i^{(2)}:=\alpha_{q-i}+\beta_{q-i}+\gamma_{q-i}+ \delta_{q-i},\\
            &\eta_i^{(3)}:=\alpha_{q-i}-\beta_{q-i}-\gamma_{q-i}+\delta_{q-i}, \quad \quad \eta_i^{(4)}:=-\alpha_i-\beta_i+\gamma_i+\delta_i.
        \end{split}
        \end{equation}
        Given $1\leq j \leq 4$,  assume that there exists $\eta_i^{(j)}\neq 0$ for some $0\leq i \leq q$.  Since $F$ is an infinite field, we can choose $k_0,\dots,k_{q}\in F$ such that $k_i\neq 0$ and $k_i\neq k_j$ for $0\leq i \neq j \leq q$. Then, we get the following homogeneous linear system of $q+1$ equation in the $q+1$ variables $\eta_0^{(j)}, \ldots, \eta_q^{(j)}$
        $$
        \sum_{i=0}^q  k_s^i \eta_i^{(j)}=0, \quad \quad 0 \leq s \leq q.
        $$
        Since the matrix associated to the above system is a Vandermonde matrix, it follows that $\eta_i^{(j)}=0$ for any $0 \leq i \leq q$, a contradiction. Thus, by \eqref{eq: eta} we get that $\alpha_i=\beta_i=\gamma_i=\delta_i=0$ for all $0\leq i \leq q$. 

        Therefore, the $4(q+1)$ generalized $*$-polynomials $f_{p,q}^{(i)}(x_1^+,x_2^+)$, $f_{p,q,w_1^+}^{(i)}(x_1^+,x_2^+)$, $g_{p,q}^{(i)}(x_1^+,x_2^+)$, $g_{p,q, w_1^+}^{(i)}(x_1^+,x_2^+)$ are linearly independent modulo $\I^{W,*}(M^D)$.

Proceeding as in the previous lemma, we observe that for every $0\leq i \leq q$, the complete linearizations of $f_{p,q}^{(i)}(x_1^+,x_2^+)$, $f_{p,q,w_1^+}^{(i)}(x_1^+,x_2^+)$, $g_{p,q}^{(i)}(x_1^+,x_2^+)$, and $g_{p,q, w_1^+}^{(i)}(x_1^+,x_2^+)$ are, respectively, the $(W,*)$-polynomials 
        $$e_{T_{(p+q,p)}^{(i)}}(x_1^+ \cdots x_{n}^+), \quad e_{T_{(p+q,p)}^{(i)}}(x_1^+ \cdots x_{n}^+ w_1^+),
        $$
        $$
        e_{T_{(p+q,p)}^{(i)}}(x_1^+ \cdots x_{i+p}^+ w_1^+ x_{i+p+1}^+ \cdots  x_{n}^+), \quad e_{T_{(p+q,p)}^{(i)}}(x_1^+ \cdots x_{i+p}^+ w_1^+ x_{i+p+1}^+ \cdots  x_{n}^+  w_1^+).$$
         Hence, these multilinear $(W,*)$-polynomials are also linearly independent modulo $\Id^{W,*}(M^D)$. Consequently, they generate $4(q+1)$ distinct copies of the same irreducible module corresponding to the pair of partitions $\big((p+q,p),\emptyset\big)$ in the module decomposition of $P_{n,0}(M^D)$. Therefore, we conclude that $m_{(p+q,p),\emptyset}\geq 4(q+1)$, as claimed.

		\vspace{1ex}
		
		\noindent\textbf{Case 2:} We prove that $m_{(p+q,p,1),\emptyset}\geq 2(q+1)$ for all $p \geq 1$ and $q \geq 0$ such that $2p+q=n-1$.
		
		For $0\leq i \leq q$, consider the tableau
		\begin{equation*}
			T_{(p+q,p,1)}^{(i)}=\begin{array}{|c|c|c|c|c|c|c|c|c|c|c|}
				\hline
				i+p & i+1 & \dots & i+p-1 & 1 & \dots & i & i+2p+2 & \dots & n\\
				\hline
				i+p+1 & i+p+3 & \dots &  i+2p+1\\   
				\cline{1-4}
				i+p+2\\
				\cline{1-1}
			\end{array}
		\end{equation*}
		and its associated $(W,*)$-polynomials
		\begin{align}
		   h_{p,q}^{(i)}(x_1^+,x_2^+,x_3^+)&=(x_1^+)^i\underbrace{\widehat{x_1^+}\cdots \widetilde{x_1^+}}_{p-1}\overline{x_1^+}\,\overline{x_2^+} \,\overline{x_3^+}\underbrace{\widehat{x_2^+}\cdots \widetilde{x_2^+}}_{p-1}(x_1^+)^{q-i},\label{eq: h}\\
		h_{p,q,w_1^+}^{(i)}(x_1^+,x_2^+,x_3^+)&=(x_1^+)^i\underbrace{\widehat{x_1^+}\cdots \widetilde{x_1^+}}_{p-1}\overline{x_1^+}\,\overline{x_2^+} \,\overline{x_3^+}\underbrace{\widehat{x_2^+}\cdots \widetilde{x_2^+}}_{p-1}(x_1^+)^{q-i}w_1^+, \label{eq: hw_1}
		\end{align}
		 where the symbol $\hat{}$ or $\tilde{}$ or $\bar{}$ means alternation on the corresponding variables. For any $0\leq i \leq q$, each of these generalized $*$-polynomials is obtained from the quasi-idempotents corresponding to the tableau $T_{(p+q,p,1)}^{(i)}$ by identifying all the elements in each row.

         Clearly, $h^{(i)}_{p,q}(x_1^+,x_2^+,x_3^+)$,  $h_{p,q,w_1^+}^{(i)}(x_1^+,x_2^+,x_3^+)$, for $0\leq i \leq q$,  do not belong to $\I^{W,*}(M^D)$ and we will prove that they are linearly independent modulo $\I^{W,*}(M^D)$.

         We proceed as in the previous case. So, assume that
         $$
        \sum_{i=0}^q \alpha_i h_{p,q}^{(i)}(x_1^+,x_2^+,x_3^+)+
        \sum_{i=0}^q \beta_i h_{p,q,w_1^+}^{(i)}(x_1^+,x_2^+,x_3^+)\equiv 0 \pmod{\I^{W,*}(M^D)}.
        $$
        With the same reasoning as before, if we evaluate first $x_1^+=k(e_{11}+e_{44})+e_{22}+e_{33}+e_{12}+e_{34}$, $x_2^+=e_{11}+e_{44}$, $x_3^+=e_{22}+e_{33}$ with $0\neq k \in F$, we get
        $$
        (-1)^{p} \left(\sum_{i=0}^q (\alpha_i-\beta_i)k^i  \right) e_{12}+ \left(\sum_{i=0}^q (\alpha_i+\beta_i)k^{q-i}  \right) e_{34} =0,
        $$
       and, as a consequence,
       $$
       \sum_{i=0}^q (\alpha_i-\beta_i)k^i =0, \quad \quad \sum_{i=0}^q (\alpha_i+\beta_i)k^{q-i}=0.
       $$
Now, by applying the same argument as in Case $1$ of the proof of Lemma \ref{lemma: cn-1,1}, it follows that $\alpha_i=\beta_i=0$ for all $0\leq i \leq q$. Hence the $2(q+1)$ generalized $*$-polynomials $h_{p,q}^{(i)}(x_1^+,x_2^+,x_3^+)$ and $h_{p,q, w_1^+}^{(i)}(x_1^+,x_2^+,x_3^+)$ are linearly independent modulo $\I^{W,*}(M^D)$. 

Therefore, as in the previous case, we obtain $m_{(p+q,p,1), \emptyset}\geq 2(q+1)$, as claimed.

\vspace{1ex}
        
		\noindent \textbf{Case 3:} We prove that $m_{(n),\emptyset}\geq 2n+2$.

        Given the tableau
        \begin{equation*}
			T_{(n)}=\begin{array}{|c|c|c|c|c|c|c|c|c|c|c|}
				\hline
				1 & 2 & 3 & \dots & n\\  
				\hline
			\end{array}
		\end{equation*}
		and its associated polynomials
        \begin{align}
            t(x^+)&=(x^+)^n, \label{eq: t}\\
            t_{w_1^+}(x^+)&=(x^+)^nw_1^+,\label{eq: tw1}\\
            u^{(i)}(x^+)&=(x^+)^i[x^+,w_1^+](x^+)^{n-i-1}, \quad \quad  0\leq i\leq n-1, \label{eq: u}\\
            u_{w_1^+}^{(i)}(x^+)&=(x^+)^i[x^+,w_1^+](x^+)^{n-i-1}w_1^+,  \quad \quad  0\leq i\leq n-1, \label{eq: uw1}
        \end{align}
        These $(W,*)$-polynomials are obtained 
        from the quasi-idempotents corresponding to the tableau $T_{(n)}$ by identifying all the elements.

		We will prove that the $2n+2$ generalized $*$-polynomials $ t(x^+)$, $ t_{w_1^+}(x^+)$, $u^{(i)}(x^+)$,  and $u_{w_1^+}^{(i)}(x^+)$, for $0\leq i \leq n-1$, which are not $(W,*)$-identities of $M^D$, are linearly independent modulo $\Id^{W,*}(M^D)$. 
        
        Suppose that
        \begin{align*}
        \alpha t(x^+)+ \beta t_{w_1^+} (x^+)+\sum_{0 \leq i \leq n-1}\gamma_i u^{(i)}(x^+)+\sum_{0 \leq i \leq n-1} \delta_i u_{w_1^+}^{(i)} (x^+)\equiv 0 \pmod{\I^{W,*}(M^D)}.
        \end{align*}

        First of all, note that if we evaluate them at $x^+=e_{11}+e_{44}+e_{22}+e_{33}$, then the last two sums vanish and we get
		$(\alpha+\beta)(e_{11}+e_{44})+(\alpha-\beta)(e_{22}+e_{33})= 0,$
		so $\alpha=\beta=0$.
        
		Now, we want to prove that the last two sums are also linearly independent.
        By applying the same arguments as in the previous cases and evaluating them at $x^+=k(e_{11}+e_{44})+e_{22}+e_{33}+e_{12}+e_{34}$, with $0\neq k \in F$, we get 
        $$
        -2\left( \sum_{i=0}^{n-1}(\gamma_i- \delta_i)k^i\right)e_{12}+2\left( \sum_{i=0}^{n-1}(\gamma_i+ \delta_i)k^{n-1-i}\right)e_{34}=0
        $$
        and, as a consequence, we have 
        \begin{equation*}
            \sum_{i=0}^{n-1}(\delta_i-\gamma_i)k^i=0, \quad  \quad
            \sum_{i=0}^{n-1}(\gamma_i+\delta_i)k^{n-1-i}=0.
        \end{equation*}
        Again, from the same argument of Case $1$ of the proof of  Lemma \ref{lemma: cn-1,1}, we obtain that 
        $\gamma_i=\delta_i=0$ for every $0 \leq i \leq n-1$. 
        
        Thus,  the $(W,*)$-polynomials $ t(x^+)$, $ t_{w_1^+}(x^+)$, $u^{(i)}(x^+)$,  and $u_{w_1^+}^{(i)}(x^+)$, for $0\leq i \leq n-1$, are linearly independent, and as in the previous cases, we obtain $m_{(n),\emptyset}\geq 2n+2$, as desired.
\end{proof}

Now, we can finally prove the main result of this section.

\begin{theorem}\label{th: cocharacter} 
Let $M^D$ be the $(W,*)$-algebra $M$ where $W$ acts on it as the algebra $D$ by left and right multiplication. For $n\geq 1$ and $0\leq r \leq n$, consider 
 $$\chi_{r,n-r}^{W,*}(M^D)=\sum_{(\lambda,\mu)\vdash (r,n-r)}m_{\lambda,\mu}(\chi_\lambda \otimes \chi_\mu),$$
 the $(r,n-r)$-th $(W,*)$-cocharacter of $M^D$.  Then: 
 \begin{enumerate}
 \item for $r=n$,
$$
m_{\lambda,\mu}= \begin{cases}
             4(q+1) & \text{if }\lambda=(p+q,p), \mu=\emptyset\\
             2(q+1) & \text{if } \lambda=(p+q,p,1), \mu=\emptyset\\
              2n+2 & \text{if } \lambda=(n),  \mu=\emptyset\\
              0 & \text{otherwise}\\
    \end{cases};
$$
\vspace{1mm}
     \item for $r=n-1$,
$$
m_{\lambda,\mu}= \begin{cases}
             2(q+1)  & \text{if } \lambda=(p+q,p),  \mu=(1)\\
              2n & \text{if }  \lambda=(n-1),  \mu=(1)\\
            0 & \text{otherwise}
    \end{cases};
$$
\vspace{1mm}

\item for $0\leq r \leq n-2$, we have $m_{\lambda,\mu}=0$.
 \end{enumerate}
\end{theorem}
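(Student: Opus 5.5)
The theorem is essentially an assembly of results already established, so the plan is to treat the three ranges of $r$ separately and invoke the right earlier statement in each case.

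For $r=n$, item (1) is exactly the content of Lemma \ref{lemma: cn,0}. The structure of that argument is as follows. The highest weight vectors $f^{(i)}_{p,q}$, $f^{(i)}_{p,q,w_1^+}$, $g^{(i)}_{p,q}$, $g^{(i)}_{p,q,w_1^+}$, $h^{(i)}_{p,q}$, $h^{(i)}_{p,q,w_1^+}$, $t$, $t_{w_1^+}$, $u^{(i)}$, $u^{(i)}_{w_1^+}$ give lower bounds for the multiplicities of $(p+q,p)$, $(p+q,p,1)$ and $(n)$. These lower bounds already exhaust the dimension $c_{n,0}(M^D)=n2^n+2$ computed in \eqref{eq: c_{n,0}(M^D)}. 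Formula \eqref{eq: codim and ch M^D} then forces equality of the multiplicities and the vanishing of all others. I would simply cite the lemma here.

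For $r=n-1$, item (2) follows in the same way from Lemma \ref{lemma: cn-1,1}. There the polynomials $a^{(i)}_{p,q}$, $a^{(i)}_{p,q,w_1^+}$, $b^{(i)}$, $b^{(i)}_{w_1^+}$ give lower bounds whose weighted sum equals $c_{n-1,1}(M^D)=2^n$ from \eqref{eq: c_{n-1,1}(M^D)}. Again, \eqref{eq: codim and ch M^D} yields the exact multiplicities and shows every other $m_{\lambda,\mu}$ is $0$.

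For $0\le r\le n-2$, the key input is from the proof of Theorem \ref{thm: id e cn M^D}. By \eqref{eq: consequences}, any monomial containing two skew variables lies in $I=\I^{W,*}(M^D)$. This holds whatever elements $1_W,w_1^+$ are interleaved, and after reducing away the remaining basis elements of $W$ via the generators $w_i^+x^\epsilon$ and $w_j^-x^\epsilon$. Hence $P_{r,n-r}\subseteq \I^{W,*}(M^D)$, so $P_{r,n-r}(M^D)=0$ and $c_{r,n-r}(M^D)=0$. Since \eqref{eq: codim and ch M^D} expresses this as $\sum m_{\lambda,\mu}d_\lambda d_\mu$ with all $d_\lambda,d_\mu\ge 1$, every $m_{\lambda,\mu}$ must vanish.

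The real work was already done in the two lemmas. The only point needing care in assembling them is to confirm that the claimed equality of the sum of lower bounds with the codimension makes the \emph{exact} multiplicities follow, rather than only inequalities. This relies on the nonnegativity of all the terms in \eqref{eq: codim and ch M^D}: if some multiplicity exceeded its lower bound, or some other $m_{\lambda,\mu}$ were nonzero, the weighted sum would strictly exceed the computed codimension. I expect no further obstacle beyond checking that this summation identity, taken from \cite{MarRiz2}, is applied with the correct ranges of $p$ and $q$.
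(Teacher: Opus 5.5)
Your proposal is correct and follows the paper's proof. Items (1) and (2) are read off from Lemmas~\ref{lemma: cn,0} and~\ref{lemma: cn-1,1}, and item (3) follows because $c_{r,n-r}(M^D)=0$ for $0\le r\le n-2$ (established in the proof of Theorem~\ref{thm: id e cn M^D}), which by \eqref{eq: codim and ch M^D} forces every $m_{\lambda,\mu}$ there to vanish.
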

\begin{proof}
Items (1) and (2) follow from Lemmas \ref{lemma: cn,0} and \ref{lemma: cn-1,1}, respectively. Moreover, 
for $0\leq r \leq n-2$, since $c_{r,n-r}(M^D)=0$, it follows from \eqref{eq: codim and ch M^D} that $m_{\lambda,\mu}=0$, which proves item (3). 
\end{proof}

\section{Growth of the generalized $*$-codimensions of $M$}

In this section, we investigate the growth of $\V^{W,*}(M^D)$ and $\V^{W,*}(M^F)$. To this end, recall that a $(W,*)$-variety $\mathcal{V}$ is said to have almost polynomial growth if $c_n^{W,*}(\mathcal{V})$ is not polynomially bounded but every proper $(W,*)$-subvariety $\mathcal{U}$ of $\mathcal{V}$ has polynomial growth. 

 If we consider $M^F$, then $\Phi(W)\cong F$. Hence, we are dealing with the ordinary $*$-identities, and consequently, by \cite[Theorem 3]{MisVal}, we obtain the following result.

\begin{theorem}\label{th: MFapg}
    $\V^{W,*}(M^F)$ has almost polynomial growth.
\end{theorem}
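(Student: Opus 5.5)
The plan is to reduce everything to the ordinary $*$-setting and then quote \cite[Theorem 3]{MisVal}. That theorem says that $\V^*(M)$, the variety of algebras with involution generated by $M$ with the reflection involution, has almost polynomial growth.

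\emph{Exponential growth.} By Theorem \ref{thm: id e cn M^F}, $c_n^{W,*}(M^F)=(n-1)2^n+2$, so the codimensions are not polynomially bounded. Equivalently, $\exp^{W,*}(M^F)=2$ by Theorem \ref{th: w*exp}.

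\emph{Proper subvarieties.} Let $\mathcal{U}\subsetneq \V^{W,*}(M^F)$ be a proper $(W,*)$-subvariety. Then $\I^{W,*}(\mathcal{U})\supsetneq \I^{W,*}(M^F)$, and since the characteristic is zero there is a multilinear $f\in \I^{W,*}(\mathcal{U})\setminus \I^{W,*}(M^F)$.

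\emph{Reduction to ordinary $*$-polynomials.} Fix the basis $\mathcal{B}_W$ of Theorem \ref{thm: id e cn M^F}, so that all $w_i^\eta$ with $i\geq 1$ lie in $\ker\Phi$. Every algebra in $\mathcal{U}$ satisfies the generators $w_i^\eta x^\epsilon$ of $\I^{W,*}(M^F)$, together with their consequences $x^\epsilon w_i^\eta$ and $u\,w_i^\eta\, v$. Hence, modulo $\I^{W,*}(\mathcal{U})$, every monomial containing some $w_i^\eta$ with $i\ge1$ vanishes. So we may assume $f\in P_n^*$ is an ordinary $*$-polynomial, and $f\notin \I^*(M)$.

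\emph{Transfer to the ordinary setting.} Consider the ordinary $*$-variety $\mathcal{U}_0$ defined by $\I^*(M)+\langle f\rangle_{T^*}$, a proper subvariety of $\V^*(M)$. By \cite[Theorem 3]{MisVal}, $c_n^*(\mathcal{U}_0)$ is polynomially bounded. We then compare codimensions. By the same reduction, $P_n^{W,*}$ modulo $\I^{W,*}(\mathcal{U})$ is spanned by the images of $P_n^*$. Moreover, $P_n^*\cap \I^{W,*}(\mathcal{U})$ contains $P_n^*\cap(\I^*(M)+\langle f\rangle_{T^*})$, since $\I^*(M)\subseteq \I^{W,*}(M^F)$ and $f\in\I^{W,*}(\mathcal{U})$. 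Therefore
$$c_n^{W,*}(\mathcal{U})\le c_n^*(\mathcal{U}_0),$$
which is polynomially bounded. This gives almost polynomial growth.

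The main obstacle is making the identification rigorous: we need a multilinear $(W,*)$-polynomial to be congruent, modulo the $T_W^*$-ideal generated by the $w_i^\eta x^\epsilon$, to an element of $P_n^*$. This follows by expanding coefficients in $\mathcal{B}_W$ and using $1_W x 1_W=x$. We also need the $T^*$-ideal generated by $f$ to be contained in the $T_W^*$-ideal it generates, which holds because $(W,*)$-endomorphisms include the $*$-substitutions.
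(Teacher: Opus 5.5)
Your proposal is correct and follows the paper's own route. The paper simply observes that, since $\Phi(W)\cong F$, one is dealing with ordinary $*$-identities of $M$, and then invokes \cite[Theorem 3]{MisVal}; your reduction modulo the $T_W^*$-ideal generated by the $w_i^\eta x^\epsilon$, together with the bound $c_n^{W,*}(\mathcal{U})\le c_n^*(\mathcal{U}_0)$, makes explicit the step the paper leaves implicit.
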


Next, we show that $\V^{W,*}(M^D)$ also has almost polynomial growth.
If $\mathcal{V}$ is a $(W,*)$-variety, then for $n \geq 1$ and $0\leq r\leq n$, by \eqref{eq: ch} we write
$$
\chi_{r,n-r}^{W,*}(\mathcal{V})=\sum_{(\lambda,\mu)\vdash (r,n-r)}m_{\lambda,\mu}^{\mathcal{V}}(\chi_{\lambda} \otimes \chi_{\mu})
$$
where $m_{\lambda,\mu}^{\mathcal{V}}$ denotes the multiplicity of $\chi_{\lambda}\otimes \chi_{\mu}$ in $\chi_{r,n-r}^{W,*}(\mathcal{V})$.

\begin{lemma}\label{lem: Id}
For any $\alpha,\beta \in F$ we have:
    $$([x^+,x^-](\alpha1_W+\beta w_1^+))^*\equiv [x^+,x^-](\alpha1_W-\beta w_1^+)\pmod{\I^{W,*}(M^D)},$$
    and
    $$((x^+x^-+x^-x^+)(\alpha 1_W+\beta w_1^+))^*\equiv (x^+x^-+x^-x^+)(-\alpha1_W+\beta w_1^+)\pmod{\I^{W,*}(M^D)}.$$
\end{lemma}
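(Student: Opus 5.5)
We first compute the involution formally in $W\langle X,*\rangle$ and then reduce the result modulo $\I^{W,*}(M^D)$, using the generators listed in Theorem~\ref{thm: id e cn M^D}. Recall that $(w_1^+)^\sharp=w_1^+$, $(x^+)^*=x^+$ and $(x^-)^*=-x^-$. Since $*$ reverses products and acts on $W$ by $\sharp$, we get
\[
([x^+,x^-](\alpha 1_W+\beta w_1^+))^*=(\alpha 1_W+\beta w_1^+)[x^+,x^-]^* .
\]
Moreover $[x^+,x^-]^*=[(x^-)^*,(x^+)^*]=[-x^-,x^+]=[x^+,x^-]$. Similarly, $(x^+x^-+x^-x^+)^*=-(x^+x^-+x^-x^+)$. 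Both first identities therefore reduce to moving the coefficient $w_1^+$ from the left of the polynomial to its right, modulo $\I^{W,*}(M^D)$.

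The key step is a commutation rule for $w_1^+$ past the two polynomials, which we derive from the generators of the $T_W^*$-ideal. We use $w_1^+x^-\equiv -x^-w_1^+$. We also use the consequence \eqref{eq: consequence w_1 1}: $w_1^+x^+x^-\equiv x^+x^-w_1^+$, and likewise $x^-x^+ w_1^+\equiv w_1^+x^-x^+$ by \eqref{eq: consequence w_1 2}.

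Then $w_1^+x^+x^-\equiv x^+x^-w_1^+$. For the other term, $w_1^+x^-x^+\equiv -x^-w_1^+x^+$, and by \eqref{eq: consequence w_1 2} (moving $w_1^+$ to the end) $x^-w_1^+x^+\equiv x^-x^+w_1^+$, so $w_1^+x^-x^+\equiv -x^-x^+w_1^+$. This sign is the delicate point, and we would double-check it by direct evaluation in $M^D$ rather than trust formal manipulations.

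Hence
\[
w_1^+[x^+,x^-]\equiv x^+x^-w_1^+ + x^-x^+w_1^+,\qquad w_1^+(x^+x^-+x^-x^+)\equiv (x^+x^- - x^-x^+)w_1^+ .
\]
This mixes the commutator and the anticommutator, which does not match the statement as written. So the main obstacle is fixing the correct sign behaviour. The cleanest way to settle it is the evaluation check. Since $M^D$ is spanned by $e_{11}+e_{44}$, $e_{22}+e_{33}$, $e_{12}$, $e_{34}$, and $w_1^+$ acts by multiplication by $d=e_{11}+e_{44}-e_{22}-e_{33}$, we have
\[
d e_{12}=e_{12},\quad e_{12}d=-e_{12},\qquad d e_{34}=-e_{34},\quad e_{34}d=e_{34}.
\]
Any product involving $x^-$ lies in $Fe_{12}+Fe_{34}$, and on this subspace left multiplication by $d$ equals minus right multiplication by $d$. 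Thus, modulo $\I^{W,*}(M^D)$, $w_1^+g\equiv -g w_1^+$ for every $g$ of degree one in $x^-$.

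Since the identity is multilinear, we verify it on spanning elements. With this rule,
\[
(\alpha+\beta w_1^+)[x^+,x^-]\equiv [x^+,x^-](\alpha-\beta w_1^+),
\]
which gives the first claim. Likewise
\[
-(\alpha+\beta w_1^+)(x^+x^-+x^-x^+)\equiv (x^+x^-+x^-x^+)(-\alpha+\beta w_1^+),
\]
which gives the second. The earlier formal computation should be corrected accordingly: the correct reduction is \eqref{eq: consequence w_1 1} applied with the sign coming from $w_1^+x^-\equiv -x^-w_1^+$.
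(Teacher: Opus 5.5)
Your final argument is correct, and it takes a genuinely different route from the paper. You compute $*$ formally and correctly: $[x^+,x^-]^*=[x^+,x^-]$, $(x^+x^-+x^-x^+)^*=-(x^+x^-+x^-x^+)$, and $w_1^+$ is symmetric. After that, everything reduces to showing $w_1^+g+gw_1^+\in\I^{W,*}(M^D)$ for $g\in\{[x^+,x^-],\,x^+x^-+x^-x^+\}$. You check this straight from the definition of identities. Every evaluation of $g$ lies in the ideal $J=Fe_{12}+Fe_{34}$, and $dy=-yd$ for all $y\in J$, where $d=e_{11}+e_{44}-e_{22}-e_{33}$ is the element through which $w_1^+$ acts.

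The paper instead proves $w_1^+g\equiv -gw_1^+$ symbolically. It writes $w_1^+x^+=x^+w_1^+-[x^+,w_1^+]$. Since $[x^+,w_1^+]$ is skew-symmetric, the products $[x^+,w_1^+]x^-$ and $x^-[x^+,w_1^+]$ are consequences of $x_1^-x_2^-$ (Lemma~\ref{Lem: consequence x_1^- x_2^-}). The generator $w_1^+x^-+x^-w_1^+$ then moves $w_1^+$ past $x^-$ with a sign. The paper's derivation shows the congruence already holds modulo the $T_W^*$-ideal generated by $x_1^-x_2^-$ and $w_1^+x^-+x^-w_1^+$. Your version is shorter and avoids all sign bookkeeping. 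It uses nothing from Theorem~\ref{thm: id e cn M^D} beyond the choice of $w_1^+$ acting by $d$, which is all the lemma, stated modulo $\I^{W,*}(M^D)$, requires.

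You should delete the formal detour rather than leave it in the write-up. The claims $w_1^+x^+x^-\equiv x^+x^-w_1^+$ and $x^-x^+w_1^+\equiv w_1^+x^-x^+$ are both false. For $x^+=e_{11}+e_{44}$ and $x^-=e_{12}-e_{34}$, the two sides of the first evaluate to $e_{12}$ and $-e_{12}$. The second claim also contradicts the line right after it. The congruences \eqref{eq: consequence w_1 1} and \eqref{eq: consequence w_1 2} hold only up to sign, since each passage of $w_1^+$ across $x^-$ contributes a factor $-1$; you applied them with a plus sign. Done correctly, $w_1^+x^+x^-\equiv x^+w_1^+x^-\equiv -x^+x^-w_1^+$ and $w_1^+x^-x^+\equiv -x^-x^+w_1^+$. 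So the formal route gives the same rule $w_1^+g\equiv -gw_1^+$ for both $g$, with no mixing of commutator and anticommutator. That is exactly the paper's proof.
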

\begin{proof}
Since $\alpha1_W+\beta w_1^+$ is a symmetric element of $W$, $[x^+,x^-]$ is a symmetric polynomial, and $(x^+x^-+x^-x^+)$ is a skew-symmetric polynomial,  we have 
$$([x^+,x^-](\alpha1_W+\beta w_1^+))^*=(\alpha1_W+\beta w_1^+)[x^+,x^-],$$
and
$$
((x^+x^-+x^-x^+)(\alpha 1_W+\beta w_1^+))^*=- (\alpha 1_W+\beta w_1^+)(x^+x^-+x^-x^+).
$$
Then,
 since $[x^+,w_1^+]$ is skew-symmetric, $ x_1^-x_2^-, \, w_1^+x^-+x^-w_1^+\in \I^{W,*}(M^D)$ and by Lemma \ref{Lem: consequence x_1^- x_2^-}, we obtain the desired conclusion.
\end{proof}

\begin{lemma}\label{lemma: M,N-M}
   Let $\mathcal{U}$ be a proper subvariety of $\V^{W,*}(M^D)$. Then there exist constants $K\leq N$ such that
    $$
    (x_1^+)^{K}x_2^-(x_1^+)^{N-K}+\hat{\alpha} (x_1^+)^{K}x_2^-(x_1^+)^{N-K}  w_1^+ +\sum_{i<K}\hat{\beta}_i(x_1^+)^ix_2^-(x_1^+)^{N-i}+ \sum_{i<K}\hat{\gamma}_i(x_1^+)^ix_2^-(x_1^+)^{N-i}w_1^+\in \I^{W,*}(\mathcal{U}),
$$
where $\hat{\alpha}, \hat{\beta}_i, \hat{\gamma}_i \in F$.
\end{lemma}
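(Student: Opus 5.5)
Since $\mathcal{U}$ is a proper subvariety of $\V^{W,*}(M^D)$, there is a multilinear generalized $*$-polynomial $f\in \I^{W,*}(\mathcal{U})\setminus \I^{W,*}(M^D)$. By the decomposition \eqref{eq: decomp P_n^{W,*}(A)} together with Theorem~\ref{th: cocharacter}, we may take $f\in P_{r,n-r}$ with $r\in\{n-1,n\}$. Moreover, since $\I^{W,*}(\mathcal{U})\cap P_{r,n-r}$ is an $S_r\times S_{n-r}$-submodule strictly larger than $\I^{W,*}(M^D)\cap P_{r,n-r}$, it contains an irreducible module, not in $\I^{W,*}(M^D)$, corresponding to one of the pairs $(\lambda,\mu)$ listed in Theorem~\ref{th: cocharacter}. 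Thus we may assume that $f$ is, modulo $\I^{W,*}(M^D)$, a nonzero linear combination of the highest weight vectors from Lemmas~\ref{lemma: cn-1,1} and~\ref{lemma: cn,0} for a fixed $(\lambda,\mu)$. These are the polynomials $a^{(i)}_{p,q}, a^{(i)}_{p,q,w_1^+}$, $b^{(i)}, b^{(i)}_{w_1^+}$, $f,g,h,t,u$ and their $w_1^+$-variants, viewed as elements of the relatively free algebra. In every case we then produce from $f$, by substitutions and $T_W^*$-consequences, an element of the required shape in the two variables $x_1^+,x_2^-$.

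The plan proceeds case by case on $(\lambda,\mu)$. If $\mu=(1)$, then $f=\sum_i \alpha_i a^{(i)}_{p,q}+\sum_i\beta_i a^{(i)}_{p,q,w_1^+}$ or $f=\sum_i\alpha_i b^{(i)}+\sum_i\beta_i b^{(i)}_{w_1^+}$. In the $b$-case the element already has the desired form with $N=n-1$: take $K$ to be the largest index with $(\alpha_K,\beta_K)\neq(0,0)$, and normalize. The one difficulty arises if $\alpha_K=0$ but $\beta_K\neq0$. In that situation we multiply on the right by $w_1^+$ and use $(w_1^+)^2x^\epsilon\equiv x^\epsilon$ together with \eqref{eq: consequence w_1 1}, which swaps the roles of $1_W$ and $w_1^+$. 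In the $a$-case we substitute $x_2^+\mapsto x_1^+$: the alternations then vanish, so instead we substitute suitable symmetric expressions. A cleaner route is to multiply $f$ on the left or right by additional $x_1^+$ factors, or to substitute $x^-\mapsto$ a skew polynomial, reducing modulo $\I^{W,*}(M^D)$ via the basis \eqref{eq: generators P_n-1,1}. The key point is that, modulo $\I^{W,*}(M^D)$, the elements of $P_{n-1,1}$ are determined by their evaluations on $e_{11}+e_{44}$, $e_{22}+e_{33}$, $e_{12}-e_{34}$. Since $[x_1^+,x_2^+]$ behaves like a skew variable, we replace $x_2^+$ by $x_1^+ + y$ and take the component linear in an auxiliary variable. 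We aim to show that some consequence is a nontrivial combination of $(x_1^+)^ix_2^-(x_1^+)^{N-i}v$.

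If $\mu=\emptyset$, the relevant vectors contain either a commutator $[x_1^+,x_2^+]$ or $[x^+,w_1^+]$, or the products $x_1^+w_1^+x_2^+-x_2^+w_1^+x_1^+$. In each case we substitute $x_2^+\mapsto x_2^-$-type skew expressions. Concretely, we replace the variable inside the commutator by $x_2^+ := x_1^+x_2^- + x_2^-x_1^+$, or use that $[x^+,w_1^+]$ is skew. Then Lemma~\ref{Lem: consequence x_1^- x_2^-} together with \eqref{eq: consequence w_1 1}--\eqref{eq: consequence w_1 2} and Lemma~\ref{lem: Id} lets us convert each commutator into a single occurrence of $x_2^-$ with $x_1^+$'s around it, collecting $w_1^+$ at the right end. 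This yields an identity of $\mathcal{U}$ of the form $\sum_i(\hat\beta_i+\hat\gamma_i w_1^+)(x_1^+)^ix_2^-(x_1^+)^{N-i}$. The case $\lambda=(n)$ with only $t,t_{w_1^+}$ nonzero is handled by multiplying by $x_2^-$, since $(x^+)^n(\alpha+\beta w_1^+)x_2^-$ has the required shape after \eqref{eq: consequence w_1 1}.

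The main obstacle is ensuring nontriviality after the substitution, i.e.\ that the resulting combination is not in $\I^{W,*}(M^D)$ and has a leading coefficient at $1_W$ (not just at $w_1^+$) that can be normalized to $1$. To verify nonvanishing, I would evaluate on $x_1^+=k(e_{11}+e_{44})+e_{22}+e_{33}$ and $x_2^-=e_{12}-e_{34}$, and use the Vandermonde argument from Lemma~\ref{lemma: cn-1,1}. The $e_{12}$ and $e_{34}$ coefficients give the polynomials $\sum(\hat\beta_i-\hat\gamma_i)k^i$ and $\sum(\hat\beta_i+\hat\gamma_i)k^{N-i}$, and the leading index $K$ is chosen as the maximal $i$ with $(\hat\beta_i,\hat\gamma_i)\ne 0$. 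If the $1_W$-coefficient at $K$ vanishes, I would right-multiply by $w_1^+$ and use $(w_1^+)^2\equiv 1$ to interchange the coefficients.
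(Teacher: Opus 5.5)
You correctly reduce to a nonzero combination of the highest weight vectors for a single pair $(\lambda,\mu)$, exactly as the paper does, and two of your cases work. The $b$-combination already has the required shape, which is more direct than the paper's detour through $x^-\mapsto[x_1^+,x_2^+]$. For $\lambda=(n)$, right multiplication by $x_2^-$ works as soon as the coefficients of $t,t_{w_1^+}$ are not both zero, because it kills every $u$-term. This even handles relations such as $(x^+)^nw_1^++w_1^+(x^+)^n$, on which the paper's Case~2 substitution returns $0$.

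The other cases are not proved, and the proposal has a genuine gap.
\begin{itemize}
\item \emph{The $a$-vectors.} Your only explicit tactic fails. Under $x_2^+\mapsto x_1^++y$, each alternated pair becomes an alternation of $x_1^+$ with $y$, so the result is homogeneous of degree $p$ in $y$ and its linear part is zero for $p\geq 2$. What works is $x_2^+\mapsto(x_1^+)^2$. After reordering the symmetric variables on each side of $x^-$, the top term is $\pm(x_1^+)^{t+2p}x^-(x_1^+)^{q-t+p}(\alpha_t+\beta_tw_1^+)$, with $t$ maximal such that $(\alpha_t,\beta_t)\neq(0,0)$.
\item \emph{The case $\mu=\emptyset$.} Your concrete substitution is not admissible: $x_1^+x_2^-+x_2^-x_1^+$ is skew-symmetric. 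The symmetric choices are $[x_1^+,x_2^-]$ or $[x_2^-,w_1^+]$. Moreover, $x_2^+$ also occurs in the $p-1$ alternated pairs, so you must linearize first; otherwise several skew factors appear and everything falls into $\I^{W,*}(M^D)$.
\item \emph{The $h$-vectors and the case with only $u$-terms.} These are only gestured at.
\end{itemize}

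The decisive gap is nontriviality, which you name but do not resolve. The evaluation/Vandermonde argument only decides whether a \emph{given} combination of the $(x_1^+)^ix_2^-(x_1^+)^{N-i}v$ vanishes. It does not show that your substitution produces a nonzero one, and with mixed $f$- and $g$-vectors it need not. For $p=q=1$ let $R=f^{(0)}_{1,1,w_1^+}+f^{(1)}_{1,1,w_1^+}-g^{(0)}_{1,1}+g^{(1)}_{1,1}$. At $x_j^+=\alpha_j(e_{11}+e_{44})+\beta_j(e_{22}+e_{33})+\gamma_j(e_{12}+e_{34})$ its value is $2\gamma_1(\alpha_2\beta_1-\alpha_1\beta_2)(e_{12}+e_{34})$, so $R\notin\I^{W,*}(M^D)$. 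Yet $\gamma_2$ does not occur, so replacing $x_2^+$ by \emph{any} symmetric expression linear in a skew variable (for instance the paper's $[x_2^-,w_1^+]$) gives an identity of $M^D$. The paper's Case~1 has the same hole; its $r\pm r'$ step also drops the $*$-images of the terms with $i<t$.

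Here the skew variable has to go into an $x_1^+$-slot. The part of $R\bigl(x_1^++[x_3^-,w_1^+],(x_1^+)^2\bigr)$ linear in $x_3^-$ is congruent modulo $\I^{W,*}(M^D)$ to $-4\bigl((x_1^+)^2x_3^-x_1^+-x_1^+x_3^-(x_1^+)^2\bigr)$, which has the required form. Without an argument that decides which slot receives the skew variable and proves the outcome is nonzero, the case $\lambda=(p+q,p)$, $\mu=\emptyset$ remains open.
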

\begin{proof} 
    Since $\mathcal{U}\subsetneq \mathcal{V}=\V^{W,*}(M^D)$,  there exist $ n \geq 1,$ $0\leq  r \leq n,$ and partitions $\lambda\vdash r$ and $\mu\vdash n-r$ such that $m_{\lambda,\mu}^{\mathcal{U}}<m_{\lambda,\mu}^{\mathcal{V}}$. Then, by Theorem \ref{th: cocharacter} we have that either
    \begin{equation}\label{eq: caso 1}
        \sum_{i=0}^q  \left(\alpha_i a_{p,q}^{(i)}(x_1^+,x_2^+,x^-)+ \beta_i a_{p,q,w_1^+}^{(i)}(x_1^+,x_2^+,x^-)\right)\in \Id^{W,*}(\mathcal{U})
        \end{equation}
        with $\alpha_i,\beta_i$ not all zero, where the polynomials $a_{p,q}^{(i)}$ and $a_{p,q,w_1^+}^{(i)}$ are defined in \eqref{eq: a} and \eqref{eq: aw_1}, respectively, in the proof of Lemma \ref{lemma: cn-1,1}, or
        \begin{equation}\label{eq: case 2}
        \sum_{i=0}^{n-1}\left(\gamma_i b^{(i)}(x^+,x^-)+ \delta_i b_{w_1^+}^{(i)}(x^+,x^-)\right)\in \Id^{W,*}(\mathcal{U})
    \end{equation}
    with $\gamma_i,\delta_i$ not all zero, where the polynomials $b^{(i)}$ and $b_{w_1^+}^{(i)}$ are defined in \eqref{eq: b_i} and \eqref{eq: b_{w_1i}}, respectively, in the proof of Lemma \ref{lemma: cn-1,1}, or
    \begin{equation}\label{eq: case 3}
        \sum_{i=0}^{q} \left(\varepsilon_i f_{p,q}^{(i)}(x_1^+,x_2^+)+ \zeta_i f_{p,q,w_1^+}^{(i)}(x_1^+,x_2^+)+ \eta_i g_{p,q}^{(i)}(x_1^+,x_2^+)+ \theta_i g_{p,q,w_1^+}^{(i)}(x_1^+,x_2^+)\right)\in \Id^{W,*}(\mathcal{U})
    \end{equation}
    with $\varepsilon_i,\zeta_i, \eta_i, \theta_i$ not all zero,  where the polynomials $f_{p,q}^{(i)}$, $f_{p,q,w_1^+}^{(i)}$, $g_{p,q}^{(i)}$ and $g_{p,q,w_1^+}^{(i)}$ are defined in \eqref{eq: f}, \eqref{eq: fw_1}, \eqref{eq: gi}, and \eqref{eq: gw_1}, respectively, in the proof of Lemma \ref{lemma: cn,0}, or
    \begin{equation}\label{eq: case 4}
        \sum_{i=0}^{q}\left(\nu_i h_{p,q}^{(i)}(x_1^+,x_2^+,x_3^+)+ \pi_i h_{p,q,w_1^+}^{(i)}(x_1^+,x_2^+,x_3^+)\right)\in \Id^{W,*}(\mathcal{U})
    \end{equation}
    with $\nu_i,\pi_i$ not all zero, where the polynomials $h_{p,q}^{(i)}$ and  $h_{p,q,w_1^+}^{(i)}$ are defined in \eqref{eq: h} and \eqref{eq: hw_1}, respectively, in the proof of Lemma \ref{lemma: cn,0}, or
     \begin{equation}\label{eq: case 5}
        \rho t(x^+)+\sigma t_{w_1^+}(x^+)+\sum_{i=0}^{n-1}\left(\tau_iu^{(i)}(x^+)+\xi_i u_{w_1^+}^{(i)}(x^+)\right)\in \Id^{W,*}(\mathcal{U})
  \end{equation}
    with $\rho,\sigma,\tau_i,\xi_i$ not all zero,  where the polynomials $t$, $t_{w_1^+}$, $u^{(i)}$, and $u_{w_1^+}^{(i)}$
    are defined in \eqref{eq: t}, \eqref{eq: tw1}, \eqref{eq: u}, and \eqref{eq: uw1}, respectively, in the proof of Lemma \ref{lemma: cn,0}.

	\vspace{1ex}
		
		\noindent\textbf{Case 1:} Suppose hold \eqref{eq: case 3}. Then
        \begin{align*}
&\sum_{i=0}^q(x_1^+)^i\underbrace{\overline{x_1^+}\cdots \widetilde{x_1^+}}_{p-1}[x_1^+,x_2^+]\underbrace{\overline{x_2^+}\cdots \widetilde{x_2^+}}_{p-1}(x_1^+)^{q-i}v_i\\
&+\sum_{i=0}^q(x_1^+)^i\underbrace{\overline{x_1^+}\cdots \widetilde{x_1^+}}_{p-1}\left(x_1^+w_1^+x_2^+-x_2^+w_1^+x_1^+\right)\underbrace{\overline{x_2^+}\cdots \widetilde{x_2^+}}_{p-1}(x_1^+)^{q-i}v'_i\in \I^{W,*}(\mathcal{U}),
\end{align*}
where $v_i := \varepsilon_i1_W+\zeta_iw_1^+\in W$ and $v'_i:= \eta_i1_W+\theta_iw_1^+\in W$  are not all zero.  If we substitute $x_2^+$ with $x_3^+ +x_4^+$, we obtain
\begin{align*}
&\sum_{i=0}^q(x_1^+)^i\underbrace{\overline{x_1^+}\cdots \widetilde{x_1^+}}_{p-1}[x_1^+,x_3^++x_4^+]\underbrace{\overline{(x_3^+ +x_4^+)}\cdots \widetilde{(x_3^+ +x_4^+)}}_{p-1}(x_1^+)^{q-i}v_i\\
&+
\sum_{i=0}^q(x_1^+)^i\underbrace{\overline{x_1^+}\cdots \widetilde{x_1^+}}_{p-1}
(x_1^+w_1^+(x_3^++x_4^+)-(x_3^++x_4^+)w_1^+x_1^+) \underbrace{\overline{(x_3^++x_4^+)}\cdots \widetilde{(x_3^++x_4^+)}}_{p-1}(x_1^+)^{q-i}v'_i \in \I^{W,*}(\mathcal{U}).
\end{align*}
Now, consider the homogeneous component $c$ of degree $1$ in $x_4^+$. 

Since the $(W,*)$-polynomial  $[x_2^-,w_1^+]$ is symmetric, we may substitute $x_3^+$ by $(x_1^+)^2$ and $x_4^+$ by $[x_2^-,w_1^+]$ in $c$. Therefore, since the $(W,*)$-polynomial $x_1^+w_1^+(x_1^+)^2-(x_1^+)^2w_1^+x_1^+$ is skew-symmetric and $w_1^+x^-+x^-w_1^+,\, x_1^-x_2^-\in \Id^{W,*}(M^D)\subset \Id^{W,*}(\mathcal{U})$, by Lemma \ref{Lem: consequence x_1^- x_2^-} it follows that 
$$
\left(x_1^+w_1^+(x_1^+)^2-(x_1^+)^2w_1^+x_1^+\right) (x_1^+)^k [x_2^-,w_1^+], \  [x_2^-,w_1^+] (x_1^+)^k\left(x_1^+w_1^+(x_1^+)^2-(x_1^+)^2w_1^+x_1^+\right) \ \in  \Id^{W,*}(\mathcal{U})
$$
for all $k\geq 0$, and as a consequence, we have
\begin{align*}
&\sum_{i=0}^q(x_1^+)^i\underbrace{\overline{x_1^+}\cdots \widetilde{x_1^+}}_{p-1}\left( x_1^+[x_2^-,w_1^+]- [x_2^-,w_1^+]x_1^+ \right)\underbrace{\overline{(x_1^+)^2}\cdots \widetilde{(x_1^+)^2}}_{p-1}(x_1^+)^{q-i}v_i\\
&
+ \sum_{i=0}^q(x_1^+)^i\underbrace{\overline{x_1^+}\cdots \widetilde{x_1^+}}_{p-1}
(x_1^+w_1^+[x_2^-,w_1^+]-[x_2^-,w_1^+]w_1^+ x_1^+) \underbrace{\overline{(x_1^+)^2}\cdots \widetilde{(x_1^+)^2}}_{p-1}(x_1^+)^{q-i}v'_i \in \I^{W,*}(\mathcal{U}).
\end{align*}
Since $[x_2^-,w_1^+]\equiv 2 x_2^-w_1^+ \pmod{\I^{W,*}(M^D)}$ and $\I^{W,*}(M^D)\subset \I^{W,*}(\mathcal{U})$, also $[x_1^+,w_1^+]$ is skew-symmetric, $ x_1^-x_2^-\in \I^{W,*}(M^D) \subset \I^{W,*}(\mathcal{U})$ and by Lemma \ref{Lem: consequence x_1^- x_2^-}, we obtain
\begin{align*}
&\sum_{i=0}^q(x_1^+)^i\underbrace{\overline{x_1^+}\cdots \widetilde{x_1^+}}_{p-1}\left( x_1^+x_2^-- x_2^-x_1^+ \right)\underbrace{\overline{(x_1^+)^2}\cdots \widetilde{(x_1^+)^2}}_{p-1}(x_1^+)^{q-i}v''_i\\
&
+ \sum_{i=0}^q(x_1^+)^i\underbrace{\overline{x_1^+}\cdots \widetilde{x_1^+}}_{p-1}
(x_1^+w_1^+x_2^-w_1^+ -x_2^-(w_1^+)^2 x_1^+) \underbrace{\overline{(x_1^+)^2}\cdots \widetilde{(x_1^+)^2}}_{p-1}(x_1^+)^{q-i} v'''_i \in \I^{W,*}(\mathcal{U}),
\end{align*}
where $v''_i:= 2\zeta_i1_W+ 2\varepsilon_iw_1^+\in W$ since $x^+ (w_1^+)^2- x^+ \in \I^{W,*}(M^D) \subset \I^{W,*}(\mathcal{U})$, $v'''_i:=2\eta_i1_W+2\theta_iw_1^+\in W$.
Now, since $(w_1^+)^2x^{+}-x^{+}, \, x^{+}(w_1^+)^2-x^{+}, \, w_1^+x_2^- +x_2^- w_1^+ \in \I^{W,*}(M^D) \subset \I^{W,*}(\mathcal{U})$, we get
$$
x_1^+w_1^+x_2^- w_1^+ -x_2^- (w_1^+)^2x_1^+\equiv -x_1^+x_2^- -x_2^- x_1^+ \pmod{\I^{W,*}(\mathcal{U})},
$$
and so it follows that
\begin{equation*}\label{eq: c(x_1,x_2)'}
\begin{split}
&\sum_{i=0}^q(x_1^+)^i\underbrace{\overline{x_1^+}\cdots \widetilde{x_1^+}}_{p-1}[x_1^+,x_2^-]\underbrace{\overline{(x_1^+)^2}\cdots \widetilde{(x_1^+)^2}}_{p-1}(x_1^+)^{q-i}v''_i\\
&
- \sum_{i=0}^q(x_1^+)^i\underbrace{\overline{x_1^+}\cdots \widetilde{x_1^+}}_{p-1}
(x_1^+ x_2^- + x_2^- x_1^+)\underbrace{\overline{(x_1^+)^2}\cdots \widetilde{(x_1^+)^2}}_{p-1}(x_1^+)^{q-i}v'''_i \ \in \I^{W,*}(\mathcal{U}),
\end{split}
\end{equation*}
 where $v''_i= 2\zeta_i1_W+ 2\varepsilon_iw_1^+\in W$ and $v'''_i=2\eta_i1_W+2\theta_iw_1^+\in W$ are not all zero.

Let $t$ be the maximum $i$ such that $v''_i\neq 0$ or $v'''_i\neq 0$.  Since $[x_1^+,w_1^+]$ is skew-symmetric and $x_1^-x_2^-\in \I^{W,*}(M^D)\subset \I^{W,*}(\mathcal{U}) $, we can move $v''_i, \, v'''_i$ and obtain 
\begin{equation}\label{eq: c(x_1,x_2)}
\begin{split}
&(x_1^+)^t\underbrace{\overline{x_1^+}\cdots \widetilde{x_1^+}}_{p-1}\left( [x_1^+,x_2^-]v''_t- (x_1^+ x_2^- + x_2^- x_1^+) v'''_t\right)\underbrace{\overline{(x_1^+)^2}\cdots \widetilde{(x_1^+)^2}}_{p-1}(x_1^+)^{q-t}\\
&+\sum_{i<t}(x_1^+)^i\underbrace{\overline{x_1^+}\cdots \widetilde{x_1^+}}_{p-1}\left( [x_1^+,x_2^-]v''_i- (x_1^+ x_2^- + x_2^- x_1^+) v'''_i\right)\underbrace{\overline{(x_1^+)^2}\cdots \widetilde{(x_1^+)^2}}_{p-1}(x_1^+)^{q-i} \ \in \I^{W,*}(\mathcal{U}).
\end{split}
\end{equation}
First, observe that we may assume, without loss of generality, that $t=q-t$. Indeed, if $t<q-t$, we multiply \eqref{eq: c(x_1,x_2)} on the left by $(x_1^+)^{q-2t}$, 
thereby reducing to the case $t=q-t$.
Similarly, if $t>q-t$, we can multiply \eqref{eq: c(x_1,x_2)} on the right by $(x_1^+)^{2t-q}$. Hence, \eqref{eq: c(x_1,x_2)} becomes
\begin{equation*}
\begin{split}
r(x_1^+,x_2^-):=&(x_1^+)^t\underbrace{\overline{x_1^+}\cdots \widetilde{x_1^+}}_{p-1}\left( [x_1^+,x_2^-]v''_t- (x_1^+ x_2^- + x_2^- x_1^+) v'''_t\right)\underbrace{\overline{(x_1^+)^2}\cdots \widetilde{(x_1^+)^2}}_{p-1}(x_1^+)^{t}\\
&+\sum_{i<t}(x_1^+)^i\underbrace{\overline{x_1^+}\cdots \widetilde{x_1^+}}_{p-1}\left( [x_1^+,x_2^-]v''_i- (x_1^+ x_2^- + x_2^- x_1^+) v'''_i\right)\underbrace{\overline{(x_1^+)^2}\cdots \widetilde{(x_1^+)^2}}_{p-1}(x_1^+)^{2t-i} \ \in \I^{W,*}(\mathcal{U}),
\end{split}
\end{equation*}
with $v''_j= 2\zeta_j1_W+ 2\varepsilon_jw_1^+\in W$ and $v'''_j=2\eta_j1_W+2\theta_jw_1^+\in W$.
Since $(r(x_1^+,x_2^-))^*\in \I^{W,*}(\mathcal{U})$, by Lemma \ref{lem: Id} we have
\begin{equation*}
\begin{split}
r'(x_1^+,x_2^-):=&(x_1^+)^t\underbrace{\overline{x_1^+}\cdots \widetilde{x_1^+}}_{p-1}\left( [x_1^+,x_2^-]u_t- (x_1^+ x_2^- + x_2^- x_1^+) u'_t\right)\underbrace{\overline{(x_1^+)^2}\cdots \widetilde{(x_1^+)^2}}_{p-1}(x_1^+)^{t}\\
&+\sum_{i<t}(x_1^+)^i\underbrace{\overline{x_1^+}\cdots \widetilde{x_1^+}}_{p-1}\left( [x_1^+,x_2^-]u_i- (x_1^+ x_2^- + x_2^- x_1^+) u'_i\right)\underbrace{\overline{(x_1^+)^2}\cdots \widetilde{(x_1^+)^2}}_{p-1}(x_1^+)^{2t-i} \ \in \I^{W,*}(\mathcal{U}),
\end{split}
\end{equation*}
 where $u_t= 2\zeta_t1_W- 2\varepsilon_tw_1^+\in W$, $u'_t=-2\eta_t1_W+2\theta_tw_1^+\in W$, and for $i<t$, $u_i= 2\zeta_{2t-i}1_W- 2\varepsilon_{2t-i}w_1^+\in W$ and $u'_i=-2\eta_{2t-i}1_W+2\theta_{2t-i}w_1^+\in W$. 
Moreover, notice that 
$$v''_t+u_t=4\zeta_t1_W, \quad v'''_t+u'_t=4\theta_tw_1^+, \quad v''_t-u_t=4\varepsilon_tw_1^+, \quad    v'''_t-u'_t=4\eta_t1_W.$$

Suppose first that $\zeta_t\neq 0$ or $\theta_t\neq 0$ and  set $N=q+3p-2.$ Since  $r(x_1^+,x_2^-)+ r'(x_1^+,x_2^-)\in \I^{W,*}(\mathcal{U}),$ we have 
\begin{equation*}
\begin{split}
\zeta_t(x_1^+)^{t+2p-1} x_2^- (x_1^+)^{N-t-2p+1}  &-\theta_t(x_1^+)^{t+2p-1} x_2^- (x_1^+)^{N-t-2p+1} w_1^+\\ &+\sum_{i<t+2p-1} \beta'_i(x_1^+)^{i} x_2^- (x_1^+)^{N-i} +\sum_{i<t+2p-1} \gamma'_i(x_1^+)^{i} x_2^- (x_1^+)^{N-i} w_1^+ \ \in \I^{W,*}(\mathcal{U}),
\end{split}
\end{equation*}
for some $\beta'_i, \gamma'_i\in F$. Since  $\zeta_t\neq 0$ or $\theta_t\neq 0$ and and $x^{\epsilon}(w_1^+)^2-x^{\epsilon} \in  \I^{W,*}(M^D)\subset  \I^{W,*}(\mathcal{U})$, $\epsilon\in \{+,-\}$, we have 
 \begin{align*}
    (x_1^+)^{t_1+2p-1}x_2^- (x_1^+)^{N-t_1-2p+1} &+ \hat{\alpha} (x_1^+)^{t_1+2p-1}x_2^- (x_1^+)^{N-t_1-2p+1} w_1^+ \\
    &+\sum_{i<t_1+2p-1}\hat{\beta}_i(x_1^+)^ix_2^-(x_1^+)^{N-i} +\sum_{i<t_1+2p-1}\hat{\gamma}_i(x_1^+)^ix_2^-(x_1^+)^{N-i}w_1^+ \ \in \I^{W,*}(\mathcal{U}),
\end{align*}
for some $\hat{\alpha}, \hat{\beta}_i, \hat{\gamma}_i \in F$. Hence, setting $K:=t+2p-1$ gives the desired conclusion.

If $\zeta_t=\theta_t = 0$, then $\varepsilon_t\neq 0$ or $\eta_t\neq 0$. Thus, since $r(x_1^+,x_2^-)- r'(x_1^+,x_2^-)\in \I^{W,*}(\mathcal{U}),$ by proceeding as above we obtain the desired conclusion.

\vspace{1ex}

\noindent\textbf{Case 2:} Suppose hold \eqref{eq: case 5}. Then
\begin{equation}\label{eq: t,u}
    (x^+)^nv+\sum_{i=0}^{n-1}(x^+)^i[x^+,w_1^+](x^+)^{n-i-1}v'_i\in \I^{W,*}(\mathcal{U}),
\end{equation}
where $v:=\rho1_W+\sigma w_1^+\in W$ and $v'_i:=\tau_i 1_W+\xi_iw_1^+\in W$ are not all zero.

Let us substitute $x^+$ with $x_1^+ +x_2^+$ in \eqref{eq: t,u}, and consider the homogeneous component $c$ of degree $1$ in $x_2^+$. 
Since the $(W,*)$-polynomial  $[x_2^-,w_1^+]$ is symmetric, we may substitute $x_2^+$ by $[x_2^-,w_1^+]$ in $c$. Therefore, since the $(W,*)$-polynomial $[x_2^+,w_1^+]$ is skew-symmetric and $w_1^+x^-+x^-w_1^+,\, x_1^-x_2^-\in \Id^{W,*}(M^D)\subset \Id^{W,*}(\mathcal{U})$, by Lemma \ref{Lem: consequence x_1^- x_2^-} it follows that 
$
[x_2^+,w_1^+] (x_1^+)^k [x_2^-,w_1^+], \,  [x_2^-,w_1^+] (x_1^+)^k[x_2^+,w_1^+] \in  \Id^{W,*}(\mathcal{U})
$
for all $k\geq 0$, and as a consequence, we have
$$
\sum_{i=0}^{n-1}
(x_1^+)^i [x_2^-,w_1^+] (x_1^+)^{n-i-1} v+\sum_{i=0}^{n-1}(x_1^+)^i[[x_2^-,w_1^+],w_1^+](x_1^+)^{n-i-1}v'_i\in \I^{W,*}(\mathcal{U}).
$$
Now, since $ [x_2^-,w_1^+]-2 x_2^-w_1^+, \, x_2^-(w_1^+)^2-x_2^-\in \I^{W,*}(M^D) \subset \I^{W,*}(\mathcal{U})$ and since
$[x_1^+,w_1^+]$ is a skew-symmetric element, by Lemma \ref{Lem: consequence x_1^- x_2^-} we have
$$
\sum_{i=0}^{n-1}
(x_1^+)^i x_2^- (x_1^+)^{n-i-1} v'+\sum_{i=0}^{n-1}(x_1^+)^i x_2^-(x_1^+)^{n-i-1}v''_i\in \I^{W,*}(\mathcal{U}),
$$
where 
$v':= 2 \sigma 1_W + 2\rho w_1^+\in W$ and $v''_i:=4\tau_i 1_W+4\xi_iw_1^+\in W$ are not all zero.
Therefore, by proceeding as in Case~1, we reach the desired conclusion.

\vspace{1ex}
		
\noindent\textbf{Case 3:} Suppose hold \eqref{eq: caso 1}. Since
\begin{align*}
        & a_{p,q}^{(i)}(x_1^+,x_2^+,x^-)=(x_1^+)^i\underbrace{\overline{x_1^+}\cdots \widetilde{x_1^+}}_{p}x^-\underbrace{\overline{x_2^+}\cdots \widetilde{x_2^+}}_{p}(x_1^+)^{q-i},\\
        & a_{p,q, w_1^+}^{(i)}(x_1^+,x_2^+,x^-)=(x_1^+)^i\underbrace{\overline{x_1^+}\cdots \widetilde{x_1^+}}_{p}x^-\underbrace{\overline{x_2^+}\cdots \widetilde{x_2^+}}_{p}(x_1^+)^{q-i}w_1^+,
		\end{align*}
substituting $x^-$ by $[x_1^+,x_2^+]$ in relation \eqref{eq: caso 1}, we obtain relation \eqref{eq: case 3}. Hence, by Case~1, we get the desired conclusion.

\vspace{1ex}
		
\noindent\textbf{Case 4:} Suppose hold \eqref{eq: case 2}. Since $b^{(i)}(x^+,x^-)=(x^+)^ix^-(x^+)^{n-i-1}$ and $ b_{w_1^+}^{(i)}(x^+,x^-)=(x^+)^ix^-(x^+)^{n-i-1}w_1^+$, as in the previous case by substituting $x^-$ with $[x_1^+,x_2^+]$ in relation \eqref{eq: case 2}, we get relation \eqref{eq: case 3}. Therefore, by Case~1, we obtain the desired conclusion.

\vspace{1ex}
		
\noindent\textbf{Case 5:} Suppose hold \eqref{eq: case 4}. Since 
\begin{align*}
		   h_{p,q}^{(i)}(x_1^+,x_2^+,x_3^+)&=(x_1^+)^i\underbrace{\widehat{x_1^+}\cdots \widetilde{x_1^+}}_{p-1}\overline{x_1^+}\,\overline{x_2^+} \,\overline{x_3^+}\underbrace{\widehat{x_2^+}\cdots \widetilde{x_2^+}}_{p-1}(x_1^+)^{q-i},\\
		h_{p,q,w_1^+}^{(i)}(x_1^+,x_2^+,x_3^+)&=(x_1^+)^i\underbrace{\widehat{x_1^+}\cdots \widetilde{x_1^+}}_{p-1}\overline{x_1^+}\,\overline{x_2^+} \,\overline{x_3^+}\underbrace{\widehat{x_2^+}\cdots \widetilde{x_2^+}}_{p-1}(x_1^+)^{q-i}w_1^+, 
		\end{align*}
by substituting $x_3^+$ with $(x_1^+)^2$ in \eqref{eq: case 4} we get \eqref{eq: case 3}, and so by Case~1 we reach the desired conclusion.
\end{proof}

\begin{lemma}\label{lem: subvariety identity}
    Let $\mathcal{U}$ be a proper subvariety of $\V^{W,*}(M^D)$. Then there exist constants $K'\leq N'$ such that
    \begin{equation*}
        (x_1^+)^{K'}x_2^{\epsilon}(x_1^+)^{N'-K'}+\sum_{i<K'} \delta_i(x_1^+)^ix_2^{\epsilon}(x_1^+)^{N'-i}+\sum_{i<K'} \eta_i(x_1^+)^ix_2^{\epsilon}(x_1^+)^{N'-i}w_1^+\in \I^{W,*}(\mathcal{U}),
    \end{equation*}
    for some $\delta_i, \eta_i\in F$ and for any $\epsilon\in\{+,-\}$.
\end{lemma}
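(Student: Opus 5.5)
The plan is to start from the identity of Lemma~\ref{lemma: M,N-M} and remove the term $\hat{\alpha}(x_1^+)^{K}x_2^-(x_1^+)^{N-K}w_1^+$ from the leading position. This uses only the relations already available in $\I^{W,*}(M^D)\subseteq \I^{W,*}(\mathcal{U})$. The $\epsilon=+$ case is then obtained by rerunning the argument of Lemma~\ref{lemma: M,N-M} with a symmetric variable in place of $[x_2^-,w_1^+]$.

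Write $L_i:=(x_1^+)^ix_2^-(x_1^+)^{N-i}$ and let $g=L_K+\hat{\alpha}L_Kw_1^++\sum_{i<K}(\hat{\beta}_iL_i+\hat{\gamma}_iL_iw_1^+)$. First I record the basic commutation rule. Modulo $\I^{W,*}(M^D)$ we have $w_1^+x^-\equiv -x^-w_1^+$ and $(w_1^+)^2x^\epsilon\equiv x^\epsilon$. Moreover $w_1^+x^+\equiv x^+w_1^+$ inside any monomial containing an $x^-$, because $[x^+,w_1^+]$ is skew-symmetric and Lemma~\ref{Lem: consequence x_1^- x_2^-} kills products of two skew factors separated by symmetric variables. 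Consequently, for every monomial $m$ with exactly one $x_2^-$, we get $w_1^+m\equiv -mw_1^+$ and $mw_1^+w_1^+\equiv m$.

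Now multiply $g$ on the right by $w_1^+$ to get $gw_1^+\equiv L_Kw_1^++\hat{\alpha}L_K+\sum_{i<K}(\hat{\gamma}_iL_i+\hat{\beta}_iL_iw_1^+)$. Then
$$g-\hat{\alpha}\,gw_1^+\equiv(1-\hat{\alpha}^2)L_K+\sum_{i<K}(\beta'_iL_i+\gamma'_iL_iw_1^+).$$
So when $\hat{\alpha}^2\neq1$ we divide by $1-\hat{\alpha}^2$ and are done with $K'=K$, $N'=N$.

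The main obstacle is the degenerate case $\hat{\alpha}=\pm1$. There $L_K(1\pm w_1^+)$ is, up to a scalar, $L_K$ multiplied by an idempotent of $\overline{W}$, and neither left nor right multiplication by $w_1^+$ separates $L_K$ from $L_Kw_1^+$: by the rule above both just multiply the factor $1\pm w_1^+$ by $\pm1$. My first attempt is to apply the involution, exactly as in the proof of Lemma~\ref{lemma: M,N-M} via Lemma~\ref{lem: Id}. Since $(x_2^-w_1^+)^*\equiv -w_1^+x_2^-\equiv x_2^-w_1^+$, the involution flips the relative sign between the $1_W$-part and the $w_1^+$-part. It also reverses each monomial, sending $L_i$ to $-(x_1^+)^{N-i}x_2^-(x_1^+)^{i}$.

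To undo the reversal, I first pad $g$ as in that proof, multiplying on the right by a suitable power of $x_1^+$ so that the leading term becomes centered, i.e.\ $K=N-K$. After this, $g^*$ has the same leading monomial $L_K$ but with factor $(1\mp w_1^+)$ in place of $(1\pm w_1^+)$. Its remaining terms sit in positions $>K$, however, so one must kill them. I plan to do this by a further right multiplication of $g$ by powers of $x_1^+$ and by comparing top positions. Then $g+g^*$ or $g-g^*$ has leading part $2L_K$ (or $2L_Kw_1^+$, which is then multiplied by $w_1^+$), with all other terms in lower positions. This yields the statement for $\epsilon=-$ with new constants $K'\le N'$.

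For $\epsilon=+$ I repeat the proof of Lemma~\ref{lemma: M,N-M}, using in each case the homogeneous component of degree one in the new variable. In Case~1 the symmetric variable $x_4^+$ is kept (or replaced by a fresh $x_2^+$) rather than substituted by $[x_2^-,w_1^+]$. The same reductions, namely moving the $w_1^+$'s to the right and killing products of two skew commutators, give a relation of the same shape with $x_2^+$ in place of $x_2^-$. Removing the leading $w_1^+$-term then proceeds as above. Finally, to obtain constants valid for both signs simultaneously, I multiply the two identities on the left and right by powers of $x_1^+$ so that they share the same $K'$ and $N'$.
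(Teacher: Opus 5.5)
Your reduction for $\hat\alpha^2\neq 1$ via $g-\hat\alpha\, g w_1^+$ is correct. The degenerate case $\hat\alpha=\pm1$, however, has a genuine gap.

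After centering ($N=2K$), $g^*$ is, modulo $\I^{W,*}(M^D)$, a combination of the $L_{2K-i}$ and $L_{2K-i}w_1^+$ for the indices $i\le K$ occurring in $g$. So the image of the leading term of $g$ is always the \emph{lowest} term of $g^*$, and every nonzero lower term of $g$ produces a term of $g^*$ strictly above position $K$. Multiplying by powers of $x_1^+$, before or after applying $*$, shifts all these positions uniformly. Hence no padding can make $L_K$ the top term of $g\pm g^*$: your claimed outcome holds only when $g$ has no lower terms at all.

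You have found a real issue here. The paper's proof also centers and forms $f-f^*$. It writes the non-leading part of $f^*$ with indices $i<K$ and coefficients $\hat\beta_{2K-i},\hat\gamma_{2K-i}$, but those terms actually sit at positions $2K-i>K$, so that step needs the same repair.

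The missing idea is to split by the idempotents $\frac12(1\pm w_1^+)$ and let the new top position move. Put $E_i=L_i(1-w_1^+)$ and $F_i=L_i(1+w_1^+)$. Modulo $\I^{W,*}(M^D)$, your commutation rule gives
$E_iw_1^+\equiv -E_i$, $F_iw_1^+\equiv F_i$, $F_i^*\equiv -E_{N-i}$ and $E_i^*\equiv -F_{N-i}$.
Left multiplication by $x_1^+$ preserves the type and raises the position by one; right multiplication preserves the type and leaves the position unchanged.

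If $\hat\alpha\neq-1$, then $\frac{2}{1+\hat\alpha}\,g\cdot\frac{1+w_1^+}{2}\equiv F_K+\sum_{i<K}d_iF_i$, which lies in $\I^{W,*}(\mathcal{U})$ because this is a $W$-bimodule. Its image under $*$ is $-E_{N-K}-\sum_{i<K}d_iE_{N-i}$. This has a genuine top term $E_P$ with nonzero coefficient, where $P=N-i_0$ and $i_0$ is the least index with $d_{i_0}\neq0$ (or $K$), because these positions are pairwise distinct.

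You do not need $P=K$. Multiply the two identities by powers of $x_1^+$ on the left and right so that both have top position $\max(K,P)$ and the same degree, normalize, and add. Then $E_{K'}+F_{K'}=2L_{K'}$ plus lower terms, which is exactly the required shape. The case $\hat\alpha=-1$ is the same with $E$ and $F$ exchanged, and this argument makes your case split unnecessary.

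For $\epsilon=+$ the paper does not rerun Lemma~\ref{lemma: M,N-M}. It substitutes the skew polynomial $[x_1^+,x_2^+]$ for $x_2^-$ in the $\epsilon=-$ identity. This turns $(x_1^+)^{K}x_2^-(x_1^+)^{N-K}$ into $(x_1^+)^{K+1}x_2^+(x_1^+)^{N-K}$ minus a lower term, and sends lower terms to lower positions while keeping $w_1^+$ at the right end. Left-multiplying the $\epsilon=-$ identity by $x_1^+$ then gives common constants.

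Your proposed rerun with $x_4^+$ kept symmetric would not give the stated shape. In that proof, the unwanted terms are killed only because the substituted element $[x_2^-,w_1^+]$ contains a skew variable; an example is the terms containing $x_1^+w_1^+(x_1^+)^2-(x_1^+)^2w_1^+x_1^+$ with the new variable in an alternated slot. Likewise, $w_1^+$ can be pushed to the right end only in monomials that contain a skew variable, since $[x^+,w_1^+]$ is not an identity of $M^D$.
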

\begin{proof}
     By Lemma \ref{lemma: M,N-M}, there exist constants $K\leq N$ such that
    $$
    (x_1^+)^{K}x_2^-(x_1^+)^{N-K}+\hat{\alpha}(x_1^+)^Kx_2^-(x_1^+)^{N-K} w_1^++\sum_{i<K} \hat{\beta}_i(x_1^+)^ix_2^-(x_1^+)^{N-i} + \sum_{i< K}  \hat{\gamma}_i(x_1^+)^ix_2^-(x_1^+)^{N-i} w_1^+ \in \I^{W,*}(\mathcal{U}),
    $$
    where $\hat{\alpha},\hat{\beta}_i, \hat{\gamma}_i\in F$.
    Since $[x_1^+,w_1^+]$ is skew-symmetric and $x_1^-x_2^-\in \I^{W,*}(M^D)\subset \I^{W,*}(\mathcal{U}) $, we can move $w_1^+$ and obtain
    \begin{equation}\label{eq: id K, N-K}
        (x_1^+)^{K}(x_2^-+\hat{\alpha}x_2^-w_1^+)(x_1^+)^{N-K}+\sum_{i<K} (x_1^+)^i(\hat{\beta}_ix_2^-+\hat{\gamma}_ix_2^-w_1^+)(x_1^+)^{N-i} \in \I^{W,*}(\mathcal{U}).
    \end{equation}

First, observe that we may assume, without loss of generality, that $N=2K$ in \eqref{eq: id K, N-K}. Indeed, if $N>2K$, we multiply \eqref{eq: id K, N-K} on the left by  $(x_1^+)^{N-2K}$, thereby reducing to the case $N=2K$. Similarly, if $N<2k$,  we multiply \eqref{eq: id K, N-K} on the right by $(x_1^+)^{2K-N}$, and again we reduce to the case $N=2K$. Then,  \eqref{eq: id K, N-K} can be written as
    $$
    f:=(x_1^+)^{K}(x_2^-+\hat{\alpha}x_2^-w_1^+)(x_1^+)^{K}+\sum_{i<K} (x_1^+)^i(\hat{\beta}_ix_2^-+\hat{\gamma}_ix_2^-w_1^+)(x_1^+)^{2K-i} \in \I^{W,*}(\mathcal{U}).
    $$
    Since $f^*\in \I^{W,*}(\mathcal{U})$ and $(x_2^-w_1^+)^*- x_2^-w_1^+\in \I^{W,*}(M^D)\subset \I^{W,*}(\mathcal{U})$, we have
    $$
    g:=(x_1^+)^{K}(-x_2^-+\hat{\alpha}x_2^-w_1^+)(x_1^+)^{K}+\sum_{i<K} (x_1^+)^i(-\hat{\beta}_{2K-i}x_2^-+\hat{\gamma}_{2K-i}x_2^-w_1^+)(x_1^+)^{2K-i} \in \I^{W,*}(\mathcal{U}).
    $$
    Therefore,
    $$
    f-g=2(x_1^+)^{K}x_2^-(x_1^+)^{K}+\sum_{i<K} (x_1^+)^i((\hat{\beta}_i+ \hat{\beta}_{2K-i})x_2^-+(\hat{\gamma}_i-\hat{\gamma}_{2K-i})x_2^-w_1^+)(x_1^+)^{2K-i}\in \I^{W,*}(\mathcal{U}).
    $$
    As a consequence, we obtain
    $$(x_1^+)^{K}x_2^-(x_1^+)^{K}+\sum_{i<K} \beta'_i(x_1^+)^ix_2^-(x_1^+)^{2K-i}+\sum_{i<K} \gamma'_i(x_1^+)^ix_2^-(x_1^+)^{2K-i}w_1^+\in \I^{W,*}(\mathcal{U}),
    $$
   where $\beta'_i=2^{-1}(\hat{\beta}_i+ \hat{\beta}_{2K-i})$ and $\gamma'_i=2^{-1}(\hat{\gamma}_i-\hat{\gamma}_{2K-i})$. Now, if we substitute $x_2^-$ by $[x_1^+,x_2^+]$, we obtain
$$(x_1^+)^{K+1}x_2^+(x_1^+)^{K}+\sum_{i<K+1} \beta''_i(x_1^+)^ix_2^+(x_1^+)^{2K+1-i}+\sum_{i<K+1} \gamma''_i(x_1^+)^ix_2^+(x_1^+)^{2K+1-i}w_1^+\in \I^{W,*}(\mathcal{U}),
    $$
for some $\beta''_i, \gamma''_i\in F$.
 Hence, if we set $K'=K+1$ and $N'=2K+1$, then it follows that  
$$(x_1^+)^{K'}x_2^{\epsilon}(x_1^+)^{N'-K'}+\sum_{i<K'} \delta_i(x_1^+)^ix_2^{\epsilon}(x_1^+)^{N'-i}+\sum_{i<K'} \eta_i(x_1^+)^ix_2^{\epsilon}(x_1^+)^{N'-i}w_1^+\in \I^{W,*}(\mathcal{U}),
    $$
 for some $\delta_i, \eta_i\in F$, where $\epsilon\in\{+,-\}$, as desired. 
  %
%
\end{proof}

\begin{proposition}\label{prop: n barra}
Let $\mathcal{U}$ be a proper subvariety of $\V^{W,*}(M^D)$. Then there exists a constant $\bar{N}$ such that $m_{\lambda,\mu}^{\mathcal{U}}\leq \bar{N}$ for any $(\lambda,\mu)\vdash(r,n-r)$, $0\leq r \leq n$, $n\geq 1$.
\end{proposition}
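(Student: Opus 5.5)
We bound the multiplicities of $\mathcal{U}$ using the identity from Lemma \ref{lem: subvariety identity}. Since $\mathcal{U}\subseteq\V^{W,*}(M^D)$, we have $m^{\mathcal{U}}_{\lambda,\mu}\leq m^{\mathcal{V}}_{\lambda,\mu}$. By Theorem \ref{th: cocharacter}, only four families of pairs $(\lambda,\mu)$ can occur with nonzero multiplicity:
\begin{itemize}
\item $\big((p+q,p),(1)\big)$ with multiplicity $2(q+1)$;
\item $\big((n-1),(1)\big)$ with multiplicity $2n$;
\item $\big((p+q,p),\emptyset\big)$ and $\big((p+q,p,1),\emptyset\big)$ with multiplicities $4(q+1)$ and $2(q+1)$;
\item $\big((n),\emptyset\big)$ with multiplicity $2n+2$.
\end{itemize}
These are unbounded only because of the parameter $q$ (or $n$). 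So it suffices to show that, in $\mathcal{U}$, the multiplicity of each such pair is at most a constant depending only on the $N'$ of Lemma \ref{lem: subvariety identity}.

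Fix such a pair. Any irreducible module with character $\chi_\lambda\otimes\chi_\mu$ in $P_{r,n-r}(\mathcal{U})$ is generated by some $e_{T_\lambda}e_{T_\mu}f$. Identifying the variables in each row, as in the proofs of Lemmas \ref{lemma: cn-1,1} and \ref{lemma: cn,0}, gives a nonzero polynomial in at most three symmetric variables and one skew variable. Modulo $\I^{W,*}(M^D)$, the reductions in the proof of Theorem \ref{thm: id e cn M^D} (moving $w_1^+$ to the right end, and at most one skew factor or commutator) show that this polynomial is a linear combination of the explicit polynomials
\[
a^{(i)}_{p,q},\ a^{(i)}_{p,q,w_1^+},\ b^{(i)},\ b^{(i)}_{w_1^+},\ f^{(i)},\ g^{(i)},\ h^{(i)},\ t,\ u^{(i)}
\]
and their $w_1^+$-variants, with $0\le i\le q$ (or $0\le i\le n-1$). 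Hence the multiplicity $m^{\mathcal{U}}_{\lambda,\mu}$ is bounded by the dimension of the span of these highest weight vectors modulo $\I^{W,*}(\mathcal{U})$. Each of them has the shape
\[
(x_1^+)^{i}\,Z\,(x_1^+)^{q-i}v,\qquad v\in\{1_W,w_1^+\},
\]
where $Z$ is a fixed middle block: the alternating part of degree $2p$ or $2p+1$, the skew variable, or the commutator.

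The key step is to use the identity of Lemma \ref{lem: subvariety identity}. Substitute $x_2^\epsilon$ by the block $Z\,(x_1^+)^{j}$, or its symmetric or skew part, which is legitimate since $Z$ is symmetric or skew modulo $\I^{W,*}(M^D)$. Then multiply on the left by powers of $x_1^+$, and on the right by powers of $x_1^+$ and by $v$. This rewrites any element $(x_1^+)^{i}Z(x_1^+)^{q-i}v$ with $i\geq K'$ as a combination of elements of the same type with a smaller exponent on the left, plus their $w_1^+$-variants; the $w_1^+$ can be moved to the end using $(w_1^+)^2x-x$ and $w_1^+x^-+x^-w_1^+$. Iterating, every highest weight vector is a combination of those with $i<K'$, and $v$ ranges over $\{1_W,w_1^+\}$. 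One subtlety is that the substituted $Z$ carries alternations on $x_1^+,x_2^+$. I expect this to be harmless because substituting into a $T^*_W$-ideal is allowed for arbitrary polynomials of the right symmetry, and the alternation structure is preserved by multiplication by $x_1^+$ outside the block.

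Therefore $m^{\mathcal{U}}_{\lambda,\mu}\leq 4K'+c$ for an absolute constant $c$ accounting for the $t$ and $t_{w_1^+}$ terms. Set $\bar N$ to be this bound, which is independent of $n$, $r$, $\lambda$ and $\mu$. The main obstacle is the bookkeeping in the rewriting step: one must check that the $w_1^+$-twisted terms in Lemma \ref{lem: subvariety identity} do not reintroduce large left exponents, and that for $p$ large relative to $K'$ the block substitution still lands in the span of the canonical generators. For the latter, I would first note that the left exponent $i$ of $x_1^+$ outside the block is the only unbounded parameter, and then apply the identity with $x_2$ replaced by the entire block times the right tail.
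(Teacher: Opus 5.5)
Your overall route is the paper's. You restrict to the families of Theorem~\ref{th: cocharacter} and bound $m^{\mathcal U}_{\lambda,\mu}$ by the span modulo $\I^{W,*}(\mathcal U)$ of the explicit highest weight vectors. The spanning claim is cleanest by counting: they are independent modulo $\I^{W,*}(M^D)$ and there are exactly $m^{\mathcal V}_{\lambda,\mu}$ of them. You then use Lemma~\ref{lem: subvariety identity}, with the middle block substituted for $x_2^\epsilon$, to eliminate all but boundedly many values of the outer exponent. However, two steps of your rewriting fail as formulated and must be replaced.

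\textbf{The substitution.} You cannot substitute $Z(x_1^+)^j$, or ``the entire block times the right tail'', for $x_2^\epsilon$. Already for $Z=x^-$, $j=1$ we have $(x^-x^+)^*=-x^+x^-$, and $x^-x^+\not\equiv\pm x^+x^-$ modulo $\I^{W,*}(M^D)$. At $x^+=e_{11}+e_{44}$, $x^-=e_{12}-e_{34}$ they evaluate to $-e_{34}$ and $e_{12}$ respectively. Splitting into symmetric and skew parts does not rescue this: the two parts satisfy the identity with different coefficients, so terms $(x_1^+)^{k}Z(x_1^+)^{\cdots}$ with large $k$ reappear and the triangular reduction breaks.

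The correct move, which is the paper's, is to substitute $Z$ alone. It is exactly symmetric or skew; for instance $Z^*=(-1)^pZ$ for the block of $f_{p,q}^{(i)}$. Then multiply by $(x_1^+)^{i-K'}$ on the left and by $(x_1^+)^{q-i-(N'-K')}v$ on the right. The $w_1^+$ in the lower terms is pushed to the end via Lemma~\ref{Lem: consequence x_1^- x_2^-} (since $Z$ contains a skew factor) and $x^\epsilon(w_1^+)^2-x^\epsilon$. So no large left exponents reappear, which settles that part of your worry.

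\textbf{The range of the reduction.} The reduction requires $q-i\ge N'-K'$. So your claim that every $(x_1^+)^iZ(x_1^+)^{q-i}v$ with $i\ge K'$ is rewritten fails for $q-(N'-K')<i\le q$, a nonempty range whenever $N'>K'$. These indices must also be kept as survivors. The surviving indices are $i<K'$ or $i>q-(N'-K')$, at most $N'$ values, which gives $m^{\mathcal U}_{\lambda,\mu}\le 4N'$ rather than $4K'+c$. The paper's text also writes $\le 4K'$, but its stated constant $\bar N=4N'$ is exactly what this count yields.

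Your other concern, about $p$ being large relative to $K'$, is not an issue. The parameter $p$ only changes the block $Z$, which is substituted as a whole.
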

\begin{proof}
 By Lemma \ref{lem: subvariety identity}, there exists a constant $N'$ such that
    \begin{equation}\label{eq: N-M}     (x_1^+)^{K'}x_2^{\epsilon}(x_1^+)^{N'-K'}+\sum_{i<K'} \delta_i(x_1^+)^ix_2^{\epsilon}(x_1^+)^{N'-i}+\sum_{i<K'} \eta_i(x_1^+)^ix_2^{\epsilon}(x_1^+)^{N'-i}w_1^+\in \I^{W,*}(\mathcal{U}),
\end{equation}
for some $\delta_i, \eta_i \in F$ and a suitable $K'\leq N'$, where $\epsilon\in\{+,-\}$. We will prove that $m_{\lambda, \mu}^{\mathcal{U}}\leq 4N'$  for any $(\lambda,\mu)\vdash(r,n-r)$, $0\leq r \leq n$, $n\geq 1$.

By Theorem \ref{th: cocharacter}, we only have to consider the following cases:
\begin{itemize}
\item for $r=n$,
$$
(\lambda,\mu)=
\begin{cases}
        ((p+q,p), \emptyset)\\
        ((p+q,p,1),\emptyset)\\
        ((n),\emptyset)
    \end{cases};
$$

\vspace{1mm}

\item for $r=n-1$,
$$
(\lambda,\mu)=\begin{cases}
             ((p+q,p),(1))\\
              ((n-1),(1))\\
    \end{cases}.
$$
\end{itemize}
We proceed with the same reasoning as in \cite[Proposition 5.6]{MarRiz} and \cite[Theorem 3]{MisVal}.

We prove the statement for $\lambda=(p+q,p)$ and $\mu=\emptyset$. The other cases will follow with similar arguments.
If $q \leq  N'-1 $,  then by Theorem \ref{th: cocharacter} we have $m_{\lambda,\mu}^{\mathcal{U}}\leq 4N'$, as desired. So, let us assume that $q\geq N'$ and consider $(W,*)$-polynomials  $f_{p,q}^{(i)}(x_1^+,x_2^+)$, $f_{p,q,w_1^+}^{(i)}(x_1^+,x_2^+)$, $g_{p,q}^{(i)}(x_1^+,x_2^+)$ and $g_{p,q,w_1^+}^{(i)}(x_1^+,x_2^+)$, $0\leq  i\leq q$, defined in \eqref{eq: f}, \eqref{eq: fw_1}, \eqref{eq: gi} and \eqref{eq: gw_1}, respectively, of Lemma \ref{lemma: cn,0}. 
Since
$\underbrace{\overline{x_1^+}\cdots \widetilde{x_1^+}}_{p-1}[x_1^+,x_2^+]\underbrace{\overline{x_2^+}\cdots \widetilde{x_2^+}}_{p-1}$ is a symmetric or a skew-symmetric element according as $p$ is even or odd, respectively, from relation \eqref{eq: N-M} it follows that
\begin{equation*}\label{eq: f polinomi}
    \begin{split}
(x_1^+)^{K'}\underbrace{\overline{x_1^+}\cdots \widetilde{x_1^+}}_{p-1}[x_1^+,x_2^+]\underbrace{\overline{x_2^+}\cdots \widetilde{x_2^+}}_{p-1}(x_1^+)^{N'-K'}\equiv & 
\sum_{i<K'} \delta'_i (x_1^+)^{i}\underbrace{\overline{x_1^+}\cdots \widetilde{x_1^+}}_{p-1}[x_1^+,x_2^+]\underbrace{\overline{x_2^+}\cdots \widetilde{x_2^+}}_{p-1}(x_1^+)^{N'-i}  \\
&+\sum_{i < K'} \eta'_i(x_1^+)^i\underbrace{\overline{x_1^+}\cdots \widetilde{x_1^+}}_{p-1}[x_1^+,x_2^+]\underbrace{\overline{x_2^+}\cdots \widetilde{x_2^+}}_{p-1}(x_1^+)^{N'-i}w_1^+  \pmod{\I^{W,*}(\mathcal{U})},
\end{split}
\end{equation*}
and by acting on the right by $w_1^+$,
\begin{equation*}\label{eq: f polinomi2}
    \begin{split}
(x_1^+)^{K'}\underbrace{\overline{x_1^+}\cdots \widetilde{x_1^+}}_{p-1}[x_1^+,x_2^+]\underbrace{\overline{x_2^+}\cdots \widetilde{x_2^+}}_{p-1}(x_1^+)^{N'-K'}w_1^+\equiv & 
\sum_{i<K'}\delta'_i (x_1^+)^{i}\underbrace{\overline{x_1^+}\cdots \widetilde{x_1^+}}_{p-1}[x_1^+,x_2^+]\underbrace{\overline{x_2^+}\cdots \widetilde{x_2^+}}_{p-1}(x_1^+)^{N'-i}w_1^+  \\
&+\sum_{i< K'} \eta'_i(x_1^+)^i\underbrace{\overline{x_1^+}\cdots \widetilde{x_1^+}}_{p-1}[x_1^+,x_2^+]\underbrace{\overline{x_2^+}\cdots \widetilde{x_2^+}}_{p-1}(x_1^+)^{N'-i} \pmod{\I^{W,*}(\mathcal{U})},
\end{split}
\end{equation*}
where  $\delta'_i, \eta'_i\in F$.
Hence, since $q\geq N'$, we can apply Relation \eqref{eq: N-M} to any polynomials $f_{p,q}^{(i)}(x_1^+,x_2^+)$ and $f_{p,q,w_1^+}^{(i)}(x_1^+,x_2^+)$ such that $i\geq K'$ and, as a consequence, we get
$$
f_{p,q}^{(i)}(x_1^+,x_2^+)\equiv \sum_{j<K'} \delta'_j f_{p,q}^{(j)}(x_1^+,x_2^+) + \sum_{j< K'} \eta'_j f_{p,q,w_1^+}^{(j)}(x_1^+,x_2^+)  \pmod{\I^{W,*}(\mathcal{U})}
$$
and
$$
f_{p,q, w_1^+}^{(i)}(x_1^+,x_2^+)\equiv \sum_{j<K'} \delta'_j f_{p,q,w_1^+}^{(j)}(x_1^+,x_2^+) + \sum_{j< K'} \eta'_j f_{p,q}^{(j)}(x_1^+,x_2^+)  \pmod{\I^{W,*}(\mathcal{U})}.
$$
Similarly, since the polynomial $\underbrace{\overline{x_1^+}\cdots \widetilde{x_1^+}}_{p-1}\left(x_1^+w_1^+x_2^+-x_2^+w_1^+x_1^+\right)\underbrace{\overline{x_2^+}\cdots \widetilde{x_2^+}}_{p-1}$ is symmetric when $p$ is even and skew-symmetric when $p$ is odd and $q\geq N'$, the relation \eqref{eq: N-M} can be applied to the polynomials $g_{p,q}^{(i)}(x_1^+,x_2^+)$ and $g_{p,q,w_1^+}^{(i)}(x_1^+,x_2^+)$ for every $i\geq K'$. Consequently,
$$
g_{p,q}^{(i)}(x_1^+,x_2^+)\equiv \sum_{j<K'} \delta'_j g_{p,q}^{(j)}(x_1^+,x_2^+) + \sum_{j< K'} \eta'_j g_{p,q,w_1^+}^{(j)}(x_1^+,x_2^+)  \pmod{\I^{W,*}(\mathcal{U})}
$$
and 
$$
g_{p,q, w_1^+}^{(i)}(x_1^+,x_2^+)\equiv \sum_{j<K'} \delta'_j g_{p,q,w_1^+}^{(j)}(x_1^+,x_2^+) + \sum_{j< K'} \eta'_j g_{p,q}^{(j)}(x_1^+,x_2^+)  \pmod{\I^{W,*}(\mathcal{U})}.
$$
 Therefore, it follows that $m_{\lambda,\mu}^{\mathcal{U}}\leq 4K' \leq  4N'$, as desired.
\end{proof}

Now, we can finally prove the main theorem of this section.

\begin{theorem}\label{th: MDapg}
    The $(W,*)$-variety generated by $M^D$ has almost polynomial growth.
\end{theorem}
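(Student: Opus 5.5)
The proof has two halves: the variety generated by $M^D$ is not polynomially bounded, and every proper subvariety is. For the first half I would simply invoke Theorem~\ref{thm: id e cn M^D}, which gives $c_n^{W,*}(M^D)=n2^{n+1}+2$, so $\V^{W,*}(M^D)$ grows exponentially. (Equivalently, Theorem~\ref{th: w*exp} gives $\exp^{W,*}(M^D)=2$.)

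For the second half, let $\mathcal{U}\subsetneq \V^{W,*}(M^D)$ be a proper subvariety. By \eqref{eq: decomp codim} and \eqref{eq: codim and ch M^D},
$$
c_n^{W,*}(\mathcal{U})=\sum_{r=0}^n\binom{n}{r}\sum_{(\lambda,\mu)\vdash(r,n-r)} m^{\mathcal{U}}_{\lambda,\mu}\, d_\lambda d_\mu .
$$
Since $\mathcal{U}\subseteq \V^{W,*}(M^D)$, we have $m^{\mathcal{U}}_{\lambda,\mu}\le m^{\V}_{\lambda,\mu}$. Hence, by Theorem~\ref{th: cocharacter}, only $r\in\{n-1,n\}$ contribute, and the pairs $(\lambda,\mu)$ that occur have $\lambda$ among $(n)$, $(n-1)$, $(p+q,p)$, $(p+q,p,1)$ and $\mu\in\{\emptyset,(1)\}$. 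Proposition~\ref{prop: n barra} bounds every multiplicity by the constant $\bar N$. So it remains to control the degrees $d_\lambda$ of these shapes.

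The key step is to show that $m^{\mathcal{U}}_{\lambda,\mu}=0$ unless the second row $p$ of $\lambda$ is bounded by a constant. Then $\lambda$ has at most $c$ boxes outside its first row, so $d_\lambda\le n^{c}$, and the sum above is at most $(n+1)\cdot n\cdot(\text{number of admissible }(p,q))\cdot \bar N n^{c}$. The number of admissible $(p,q)$ is itself polynomial, so $c_n^{W,*}(\mathcal{U})$ is polynomially bounded.

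To get the bound on $p$, I would use the identity of Lemma~\ref{lem: subvariety identity}: $(x_1^+)^{K'}x_2^{\epsilon}(x_1^+)^{N'-K'}$ can be rewritten modulo $\I^{W,*}(\mathcal{U})$ through words in which fewer copies of $x_1^+$ stand to the left of $x_2$. The highest weight vectors for $((p+q,p),\cdot)$ and its variants contain the alternating blocks $\overline{x_1^+}\cdots\widetilde{x_1^+}$ and $\overline{x_2^+}\cdots\widetilde{x_2^+}$ of length about $p$. Evaluating in the relatively free algebra, as in \cite[Theorem 3]{MisVal} and \cite[Proposition 5.6]{MarRiz}, one sees that when $p\ge N'$ these products become linear combinations of elements that vanish modulo $\I^{W,*}(\mathcal{U})$. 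The reason is that, modulo $\I^{W,*}(M^D)$, the alternations in the long blocks collapse to powers of $x_1^+$ around a single skew element, and then the identity applies.

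\textbf{Main obstacle.} Making this vanishing argument precise for large $p$, uniformly over the four families $f,f_{w_1^+},g,g_{w_1^+}$ and the families $h$ and $a$, is the expected difficulty.

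An alternative, simpler route avoids bounding $p$. Every $d_\lambda$ with $\lambda$ of the listed shapes is at most $\binom{n}{p}$. Combining this with the fact that, for $q\ge N'$, each multiplicity is at most $4K'$, one can sum the codimension contributions directly, mirroring the cited proofs. I would follow \cite[Theorem 3]{MisVal} closely here and only adapt the $w_1^+$-duplications, which merely double the constants.
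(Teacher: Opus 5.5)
Your architecture matches the paper's. Exponential growth of $\V^{W,*}(M^D)$ follows from $c_n^{W,*}(M^D)=n2^{n+1}+2$. For a proper subvariety $\mathcal{U}$, only $r\in\{n-1,n\}$ contribute, and Proposition~\ref{prop: n barra} bounds the multiplicities. What remains is to show $m^{\mathcal{U}}_{\lambda,\mu}=0$ once the second row $p$ of $\lambda$ is large, and that step is the whole content of the proof. You do not supply it.

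The mechanism you describe is the one behind Proposition~\ref{prop: n barra}. Plugging the central block into $x_2^{\epsilon}$ in the identity of Lemma~\ref{lem: subvariety identity} only rewrites the copies with outer factor $(x_1^+)^i$, $i\ge K'$, in terms of those with $i<K'$. So it bounds how many independent copies survive, but it never acts on the alternated blocks whose length $p$ must be bounded. Nor do those blocks collapse to powers of $x_1^+$: since symmetric variables on either side of a skew-symmetric one commute modulo $\I^{W,*}(M^D)$,
\[
\overline{x_1^+}\cdots\widetilde{x_1^+}\,x^-\,\overline{x_2^+}\cdots\widetilde{x_2^+}\equiv\sum_{k=0}^{p}(-1)^k\binom{p}{k}(x_1^+)^{p-k}(x_2^+)^{k}\,x^-\,(x_2^+)^{p-k}(x_1^+)^{k}\pmod{\I^{W,*}(M^D)}.
\]
This is a genuine mixture of $x_1^+$ and $x_2^+$, to which the two-variable identity does not apply as it stands.

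Your ``alternative, simpler route'' fails outright. With only $m^{\mathcal{U}}_{\lambda,\mu}\le\bar N$ and $d_{(n-p,p)}\le\binom{n}{p}$, summing over $p$ gives an exponential bound; already $\sum_p d_{(n-p,p)}=\binom{n}{\lfloor n/2\rfloor}$. Bounded multiplicities are far from sufficient, and the vanishing for large $p$ cannot be bypassed.

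The missing idea is to convert the identity of Lemma~\ref{lem: subvariety identity} (leading term plus lower terms) into a pure alternated identity:
\begin{enumerate}
\item Multilinearize it in $x_1^+$, with $x_2^-$ renamed $x_{N'+1}^-$. Multiply on the right by fresh symmetric variables $y_1^+\cdots y_{K'}^+$, and alternate $x_j^+$ with $y_j^+$ for $1\le j\le K'$.
\item Push $w_1^+$ to the right end, and use that symmetric variables on the same side of $x_{N'+1}^-$ can be permuted modulo $\I^{W,*}(M^D)$ (Lemma~\ref{Lem: consequence x_1^- x_2^-}). Then every monomial in which some $x_j^+$, $j\le K'$, stands to the right of $x_{N'+1}^-$ has $x_j^+$ and $y_j^+$ on the same side and is annihilated by the alternation. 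In particular every monomial coming from the lower terms $i<K'$ dies. What survives is $\overline{x_1^+}\cdots\widetilde{x_{K'}^+}\,x_{N'+1}^-\,\overline{y_1^+}\cdots\widetilde{y_{K'}^+}\,x_{K'+1}^+\cdots x_{N'}^+$.
\item Multiply on the left by $y_{K'+1}^+\cdots y_{N'}^+$ and alternate the remaining pairs. This gives
\[
\overline{x_1^+}\cdots\widetilde{x_{N'}^+}\,x_{N'+1}^-\,\overline{y_1^+}\cdots\widetilde{y_{N'}^+}\in\I^{W,*}(\mathcal{U}),
\]
and likewise with $w_1^+$ appended.
\end{enumerate}
The polynomials $a_{p,q}^{(i)}$ and $a_{p,q,w_1^+}^{(i)}$ with $p\ge N'$ are consequences of this identity. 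Substituting for $x_{N'+1}^-$ the skew-symmetric elements $[z_1^+,z_2^+]$, $z_1^+w_1^+z_2^+-z_2^+w_1^+z_1^+$ and $\widehat{z_1^+}\widehat{z_2^+}\widehat{z_3^+}$ gives the $f$, $g$ and $h$ families with $p\ge N'+1$. Only then is every $\lambda$ with nonzero multiplicity confined to at most $N'+1$ boxes below its first row. Then $d_\lambda\le n^{N'+1}$, and your final count goes through.
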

\begin{proof}
    Let $\mathcal{U}$ be a proper subvariety of $\mathcal{V}=\V^{W,*}(M^D)$. We shall prove that $\mathcal{U}$ has polynomial growth of the $(W,*)$-codimensions. By Lemma \ref{lem: subvariety identity}, there exist constants $K'\leq N'$ such that
    \begin{equation*}
         (x_1^+)^{K'}x_2^-(x_1^+)^{N'-K'}+\sum_{i<K'} \delta_i(x_1^+)^ix_2^-(x_1^+)^{N'-i}+\sum_{i<K'} \eta_i(x_1^+)^ix_2^-(x_1^+)^{N'-i}w_1^+\in \I^{W,*}(\mathcal{U}),
    \end{equation*}
    for some $\delta_i, \eta_i\in F$. By substituting $x_2^-$ with $x_{N'+1}^-$ and applying the standard multilinearization process (see \cite[Theorem 1.3.8]{GiaZai}), we get
\begin{align*}
       \sum_{\sigma \in S_{N'}}x_{\sigma(1)}^+\cdots x_{\sigma(K')}^+x_{N'+1}^-x_{\sigma(K'+1)}^+\cdots x_{\sigma(N')}^+
        +\sum_{i<K'}\sum_{\sigma \in S_{N'}} \delta_i x_{\sigma(1)}^+\cdots x_{\sigma(i)}^+x_{N'+1}^-x_{\sigma(i+1)}^+\cdots x_{\sigma(N')}^+ &\\
       +\sum_{i<K'}\sum_{\sigma \in S_{N'}}\eta_i x_{\sigma(1)}^+\cdots x_{\sigma(i)}^+x_{N'+1}^-x_{\sigma(i+1)}^+\cdots x_{\sigma(N')}^+ w_1^+ & \in \Id^{W,*}(\mathcal{U}).
    \end{align*}
Now, we multiply on the right by $y_1^+ \cdots y_{K'}^+$ where $y_i^+$ are new symmetric variables, and alternate $x_i^+$ with $y_i^+$ for all $1\leq i \leq K'$.  Since $[x_1^+,w_1^+]$ is skew-symmetric,
    $x_1^- x_2^- \in \I^{W,*}(M^D)\subset\I^{W,*}(\mathcal{U})$ and by Lemma \ref{Lem: consequence x_1^- x_2^-}, we can move $w_1^+$ to the right of any monomial in which it appears. Moreover, 
    by Lemma \ref{Lem: consequence x_1^- x_2^-} (see \cite[Remark 2]{MisVal}), any symmetric variables to the right (and to the left) of $x_{N'+1}^-$ can be reordered modulo $\I^{W,*}(M^D)\subset\I^{W,*}(\mathcal{U})$. Hence, we get
    $$
     \overline{x_1^+}\cdots \widetilde{x_{K'}^+}x_{N'+1}^-\overline{y_1^+}\cdots \widetilde{y_{K'}^+}{x_{K'+1}^+}\cdots {x_{N'}^+}\ \in \Id^{W,*}(\mathcal{U}).
    $$
    If we multiply on the left by $y_{K'+1}^+\cdots y_{N'}^+$ and we alternate $x_j^+$ with $y_j^+$ for all $K'+1\leq j\leq N'$, we obtain
 \begin{equation}\label{eq: x- id}
     \overline{x_1^+}\cdots \widetilde{x_{N'}^+}x_{N'+1}^-\overline{y_1^+}\cdots \widetilde{y_{N'}^+}\ \in \Id^{W,*}(\mathcal{U}),
 \end{equation}
 and by acting on the left by $w_1^+$ we get
  \begin{equation}\label{eq: x- id w_1}
     \overline{x_1^+}\cdots \widetilde{x_{N'}^+}x_{N'+1}^-\overline{y_1^+}\cdots \widetilde{y_{N'}^+}w_1^+\ \in \Id^{W,*}(\mathcal{U}).
 \end{equation}
This proves that, for all $q\geq 0$ and $0\leq i\leq q$, the $(W,*)$-polynomials $a_{p,q}^{(i)}$ and $a_{p,q,w_1^+}^{(i)}$, defined in \eqref{eq: a} and \eqref{eq: aw_1}, respectively, in Lemma~\ref{lemma: cn-1,1}, belong to $\I^{W,*}(\mathcal{U})$ whenever $p\geq N'$. Consequently, $m_{(q+p,p),(1)}^{\mathcal{U}}=0$ when $p\geq N'$.

    Since $z_1^+w_1^+z_2^+-z_2^+w_1^+z_1^+$ is skew-symmetric,  if we substitute  $x_{N'+1}^-$ in \eqref{eq: x- id} and \eqref{eq: x- id w_1} with $[z_1^+,z_2^+]$ or with $z_1^+w_1^+z_2^+-z_2^+w_1^+z_1^+$, where $z_i^+$ are new symmetric variables, we obtain
$$
         \overline{x_1^+}\cdots \widetilde{x_{N'}^+}[z_1^+,z_2^+]\overline{y_1^+}\cdots \widetilde{y_{N'}^+},\quad \overline{x_1^+}\cdots \widetilde{x_{N'}^+}[z_1^+,z_2^+]\overline{y_1^+}\cdots \widetilde{y_{N'}^+}w_1^+\ \in \Id^{W,*}(\mathcal{U}),
         $$
    and
    $$
         \overline{x_1^+}\cdots \widetilde{x_{N'}^+}(z_1^+w_1^+z_2^+-z_2^+w_1^+z_1^+)\overline{y_1^+}\cdots \widetilde{y_{N'}^+},\quad \overline{x_1^+}\cdots \widetilde{x_{N'}^+}(z_1^+w_1^+z_2^+-z_2^+w_1^+z_1^+)\overline{y_1^+}\cdots \widetilde{y_{N'}^+}w_1^+\ \in \Id^{W,*}(\mathcal{U}).
$$
As above, for all $q\geq 0$ and $0\leq i\leq q$, the $(W,*)$-polynomials $f_{p,q}^{(i)}$, $f_{p,q,w_1^+}^{(i)}$, $g_{p,q}^{(i)}$, and $g_{p,q,w_1^+}^{(i)}$, defined in \eqref{eq: f}--\eqref{eq: gw_1}, respectively, in Lemma~\ref{lemma: cn,0}, belong to $\I^{W,*}(\mathcal{U})$ whenever $p\geq N'+1$. Consequently, $m_{(q+p,p),\emptyset}^{\mathcal{U}}=0$ when $p\geq N'+1$.

    If we substitute $x_{N'+1}^-$ in \eqref{eq: x- id} and \eqref{eq: x- id w_1}  with $\widehat{z_1^+}\widehat{z_2^+}\widehat{z_3^+}$, where $z_i^+$ are new symmetric variables, we obtain 
$$
 \overline{x_1^+}\cdots \widetilde{x_{N'}^+}\widehat{z_1^+}\widehat{z_2^+}\widehat{z_3^+}\overline{y_1^+}\cdots \widetilde{y_{N'}^+},\quad \overline{x_1^+}\cdots \widetilde{x_{N'}^+}\widehat{z_1^+}\widehat{z_2^+}\widehat{z_3^+}\overline{y_1^+}\cdots \widetilde{y_{N'}^+}w_1^+\ \in \Id^{W,*}(\mathcal{U}).$$
Thus, as in the previous cases, for all $q\geq 0$ and $0\leq i\leq q$, the $(W,*)$-polynomials $h^{(i)}_{p,q}$ and $h_{p,q,w_1^+}^{(i)}$, defined in \eqref{eq: h} and \eqref{eq: hw_1}, respectively, in Lemma~\ref{lemma: cn,0}, belong to $\I^{W,*}(\mathcal{U})$ whenever $p\geq N'+1$. Consequently, $m_{(q+p,p,1),\emptyset}^{\mathcal{U}}=0$ when $p\geq N'+1$.

Finally, since $\mathcal{U}$ is a subvariety of $\V^{W,*}(M^D)$, by Theorem \ref{th: cocharacter} we have that $m_{\lambda,\mu}^{\mathcal{U}}=0$ for all $(\lambda,\mu)\vdash (r,n-r)$ and $0\leq r \leq n-2$.

In conclusion, by Theorem \ref{th: cocharacter} in all cases we have proved that $m_{\lambda,\mu}^{\mathcal{U}}=0$ for all $\lambda$ with more than $N'+2$ boxes below the first row
or, equivalently, if $\chi_\lambda \otimes \chi_\mu$ appears with a non-zero multiplicity in the generalized $S_r\times S_{n-r}$-cocharacter of $\mathcal U$, then $\lambda$ must contain at most $N'+1$ boxes below the first row. As a consequence, for $0\leq r \leq n$ and $n\geq 0$ we have
\begin{equation}\label{eq: chi U}
      \chi_{r,n-r}^{W,*}(\mathcal{U})=\sum_{\substack{(\lambda,\mu)\vdash (r,n-r)\\
    r-\lambda_1\leq N'+1\\
    n-r\leq 1}}m_{\lambda, \mu}^{\mathcal{U}}(\chi_{\lambda} \otimes \chi_{\mu}),
\end{equation}
    where $\lambda_1$ stands for the number of boxes of the first row of $\lambda.$

    Since $r-\lambda_1\leq N'+1$, then $\lambda_1\geq r-(N'+1)$. By the hook formula 
    $$
    d_{\lambda}=\chi_{\lambda}(1)\leq \frac{r!}{(r-(N'+1))!}\leq r^{N'+1}\leq n^{N'+1}.
    $$ 
   Also recall that $d_{\mu}=\chi_{\mu}(1)=1$ whenever $\mu=\emptyset$ or $\mu=(1)$. 
   Thus, since $c_{r,n-r}^{W,*}(\mathcal{U})= \chi_{r,n-r}^{W,*}(\mathcal{U})(1)$, by Formulas~\eqref{eq: decomp codim} and \eqref{eq: chi U}, and Proposition~\ref{prop: n barra}, we get
   \begin{align*}
       c_n^{W,*}(\mathcal{U})=& \sum_{r=0}^n \binom{n}{r} \chi_{r,n-r}^{W,*}(\mathcal{U})(1)= \sum_{r=0}^n \binom{n}{r} \sum_{\substack{(\lambda,\mu)\vdash (r,n-r)\\
    r-\lambda_1\leq N'+1\\
    n-r\leq 1}}m_{\lambda, \mu}^{\mathcal{U}} d_{\lambda}d_{\mu}\\
    =& \sum_{\substack{(\lambda,\mu)\vdash (n,0)\\
    n-\lambda_1\leq N'+1}}m_{\lambda, \mu}^{\mathcal{U}} d_{\lambda}d_{\mu} +n \sum_{\substack{(\lambda,\mu)\vdash (n-1,1)\\
    n-1-\lambda_1\leq N'+1}}m_{\lambda, \mu}^{\mathcal{U}} d_{\lambda}d_{\mu}\\
    \leq & \sum_{\substack{(\lambda,\mu)\vdash (n,0)\\
    n-\lambda_1\leq N'+1}}\bar{N}n^{N'+1}+n \sum_{\substack{(\lambda,\mu)\vdash (n-1,1)\\
    n-1-\lambda_1\leq N'+1}}\bar{N}n^{N'+1} \leq (N'+1)^2\bar{N}(n+1)n^{N'+1},
   \end{align*}
    since the number of partitions such that $r-\lambda_1\leq N'+1$ is bounded by $(N'+1)^2$.
    Therefore, $c^{W,*}_n(\mathcal{U})$ is polynomially bounded and $\V^{W,*}(M^D)$ has almost polynomial growth, as required.
\end{proof}

As a direct consequence of Corollary \ref{cor: mf md distinct}, Theorem \ref{th: MFapg} and Theorem \ref{th: MDapg}, we have the following result.

\begin{corollary}
  The $(W,*)$-algebras $M^F$ and $M^D$ generate two distinct $(W,*)$-varieties of almost polynomial growth.
\end{corollary}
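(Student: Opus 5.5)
The statement follows by assembling three earlier results, and the plan is to check that each ingredient applies as stated. First, Corollary \ref{cor: mf md distinct} shows that the varieties $\V^{W,*}(M^F)$ and $\V^{W,*}(M^D)$ are distinct. It does this by exhibiting, for every choice of acting $*$-homomorphisms with images isomorphic to $F$ and to $D$, an identity of one algebra that fails in the other:
\begin{itemize}
\item $(w_1^+)^2x^+-x^+$ is an identity of $M^D$ but not of $M^F$;
\item $w_1^+x^+$ is an identity of $M^F$ but not of $M^D$.
\end{itemize}
So neither $T_W^*$-ideal contains the other, and neither variety is contained in the other. In particular they are distinct, and neither is a subvariety of the other.

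Second, I would verify that both varieties have almost polynomial growth. There are two parts to this.
\begin{itemize}
\item \emph{Growth is not polynomially bounded.} By Theorems \ref{thm: id e cn M^F} and \ref{thm: id e cn M^D}, $c_n^{W,*}(M^F)=(n-1)2^n+2$ and $c_n^{W,*}(M^D)=n2^{n+1}+2$. Equivalently, by Theorem \ref{th: w*exp}, both $(W,*)$-exponents equal $2$.
\item \emph{Proper subvarieties have polynomial growth.} For $M^F$ this is Theorem \ref{th: MFapg}, which reduces to the ordinary $*$-case of \cite{MisVal}, since the action factors through $F$ and the extra generators $w_i^\eta x^\epsilon$ kill all nontrivial coefficients. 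For $M^D$ this is Theorem \ref{th: MDapg}, whose proof combines Proposition \ref{prop: n barra} (bounded multiplicities) with Lemma \ref{lem: subvariety identity} (cocharacters supported on partitions with boundedly many boxes below the first row), and then bounds $d_\lambda$ by the hook formula.
\end{itemize}

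Combining these gives the corollary: there are two varieties, each of almost polynomial growth, and they are distinct. The only point needing care is that ``distinct'' must hold independently of the chosen representatives $\Phi_D$ and $\Phi_F$ in the equivalence classes of Definition \ref{def: equivalence}. The proof of Corollary \ref{cor: mf md distinct} already handles this, because it works uniformly over all acting $*$-homomorphisms with the prescribed images. I do not expect any genuine obstacle here, since all the substantive work is in the cited theorems.
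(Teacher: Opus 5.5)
Your proposal is correct and is exactly the paper's argument: the corollary is stated as an immediate consequence of Corollary~\ref{cor: mf md distinct}, which gives distinctness uniformly over all acting $*$-homomorphisms with the prescribed images, together with Theorems~\ref{th: MFapg} and~\ref{th: MDapg}, which give almost polynomial growth. Your separate check that the codimensions are not polynomially bounded is redundant, since it is already part of what those two theorems establish, but it does no harm.
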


Notice that the same cannot be said for $\V^{W,*}(M^{C_i})$, for $1\leq i \leq 3$,  and $\V^{W,*}(M)$ since  by Theorems \ref{th: WM for W*A} and \ref{th: ss} they contain either $M^F$ or $M^D$. Thus, we conclude with the following result.

\begin{corollary}\label{cor: M APG}
    The $(W,*)$-variety $\V^{W,*}(M^{\Phi})$ has almost polynomial growth if and only if $\Phi(W)$ is either isomorphic to $F$ or $ D$.
\end{corollary}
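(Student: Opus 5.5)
The statement to prove is Corollary \ref{cor: M APG}: $\V^{W,*}(M^{\Phi})$ has almost polynomial growth exactly when $\Phi(W)\cong F$ or $\Phi(W)\cong D$. The plan is to use the classification of the six non-equivalent structures on $M$: up to equivalence, $\Phi(W)$ is one of $F1_M$, $D$, $C_1$, $C_2$, $C_3$, or $M$ itself. We then verify the property case by case.

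\emph{Sufficiency.} If $\Phi(W)\cong F$, then $M^{\Phi}$ represents the class $M^F$, and Theorem \ref{th: MFapg} gives almost polynomial growth. If $\Phi(W)\cong D$, Theorem \ref{th: MDapg} applies. One point needs care here: those theorems are stated for a fixed representative of the class. Their proofs (Theorems \ref{thm: id e cn M^D} and \ref{thm: id e cn M^F}) only use the existence of a basis of $W$ adapted to $\Phi$, which exists for every acting $*$-homomorphism with the given image. So the conclusions hold for every $\Phi$ in the class, in the same way as in the discussion preceding Corollary \ref{cor: mf md distinct}.

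\emph{Necessity.} Suppose $\Phi(W)=\overline{W}$ is $C_i$ or $M$. We show that $\V^{W,*}(M^{\Phi})$ properly contains a variety of exponential growth, so it cannot have almost polynomial growth.

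First, we use the inclusions stated after the classification, which come from Theorems \ref{th: WM for W*A} and \ref{th: ss}. To check the hypothesis $\bar J\subseteq \IM(J(M))$, note that $J(C_i)$ is spanned by $e_{12},e_{34}$ or by their symmetric or skew combination, and $J(M)$ is the same span for $M$. Both are inner multipliers induced by elements of $J(M)=Fe_{12}+Fe_{34}$. The semisimple parts are $\overline{W}_{ss}=F1_M$ for $C_i$ and $\overline{W}_{ss}=D$ for $M$. This gives $M^F\in\V^{W,*}(M^{C_i})$ and $M^D\in\V^{W,*}(M)$.

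By Theorem \ref{th: w*exp}, $\exp^{W,*}(M^F)=\exp^{W,*}(M^D)=2$, so these subvarieties have exponential growth. It remains to show the inclusion is proper, that is, to exhibit an identity of $M^F$ (respectively $M^D$) that fails in $M^{C_i}$ (respectively $M$).

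\begin{itemize}
\item For $C_i$: choose $w\in W$ with $\Phi(w)$ the inner multiplier of a nonzero radical element $j\in C_i$. Then $wx^{\epsilon}\in\I^{W,*}(M^F)$ by Theorem \ref{thm: id e cn M^F}. It is not an identity of $M^{C_i}$, because evaluating at $x^+=1_M$ gives $j\ne0$.
\item For $M$: choose $w$ with $\Phi(w)$ induced by $e_{12}+e_{34}$. Then $w$ is killed by the projection $\pi$, so $\Phi_{ss}(w)=0$ and $wx^{+}\in\I^{W,*}(M^{D})$. It is nonzero on $1_M$ in $M$.
\end{itemize}

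Hence each of these varieties properly contains a subvariety that is not polynomially bounded, and so none of them has almost polynomial growth.

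The main obstacle is making the properness argument independent of the chosen representative. The identity used must depend only on $\Phi$ through $\Phi(W)$ and $\Phi_{ss}=\pi\Phi$, which the choice of $w$ through $\Phi$ ensures. A second point of care is checking that $\bar J\subseteq\IM(J(M))$ for each $C_i$ and for $M$.
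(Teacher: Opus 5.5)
Your proposal is correct and follows the paper's route. Sufficiency comes from Theorems~\ref{th: MFapg} and~\ref{th: MDapg}, and necessity from the inclusion $M^{\overline{W}_{ss}}\in\V^{W,*}(M^{\overline{W}})$ given by Theorems~\ref{th: WM for W*A} and~\ref{th: ss}. Your explicit properness check supplies a step the paper leaves implicit. The identity $wx^+$, with $\Phi(w)$ a nonzero radical inner multiplier, lies in $\I^{W,*}(M^{\Phi_{ss}})$ because $\Phi_{ss}(w)=0$, but it fails on $1_M$ in $M^{\Phi}$. This check is genuinely needed: containing a variety of exponential growth rules out almost polynomial growth only when the containment is strict.
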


\end{document}